\theoremstyle{plain}
\newtheorem{theorem}{Theorem}
\newtheorem*{theorem*}{Theorem}
\newtheorem{lemma}[theorem]{Lemma}
\newtheorem{proposition}[theorem]{Proposition}
\newtheorem{corollary}[theorem]{Corollary}
\newtheorem*{corollary*}{Corollary}
\theoremstyle{definition}
\newtheorem{definition}[theorem]{Definition}
\theoremstyle{remark}
\newtheorem{remark}[theorem]{Remark}
\numberwithin{theorem}{section}
\numberwithin{equation}{section}
\renewcommand{\d}{\mathrm{d}}
\renewcommand{\leq}{\leqslant}
\renewcommand{\geq}{\geqslant}
\renewcommand{\epsilon}{\varepsilon}
\newcommand{\la}{\lesssim}
\newcommand{\ga}{\gtrsim}
\newcommand{\e}{\mathrm{e}}
\newcommand{\slashgrad}{\slashed{\nabla}}
\newcommand{\vertiii}[1]{{\left\vert\kern-0.25ex\left\vert\kern-0.25ex\left\vert #1 
    \right\vert\kern-0.25ex\right\vert\kern-0.25ex\right\vert}}
\definecolor{aoenglish}{rgb}{0.0, 0.5, 0.0} 
\DeclareFontFamily{U}{MnSymbolC}{}
\DeclareSymbolFont{MnSyC}{U}{MnSymbolC}{m}{n}
\DeclareFontShape{U}{MnSymbolC}{m}{n}{
    <-6>  MnSymbolC5
   <6-7>  MnSymbolC6
   <7-8>  MnSymbolC7
   <8-9>  MnSymbolC8
   <9-10> MnSymbolC9
  <10-12> MnSymbolC10
  <12->   MnSymbolC12}{}
\DeclareMathSymbol{\intprod}{\mathbin}{MnSyC}{'270}
\DeclareFontFamily{U}{BOONDOX-calo}{\skewchar\font=45 }
\DeclareFontShape{U}{BOONDOX-calo}{m}{n}{
  <-> s*[1.05] BOONDOX-r-calo}{}
\DeclareFontShape{U}{BOONDOX-calo}{b}{n}{
  <-> s*[1.05] BOONDOX-b-calo}{}
\DeclareMathAlphabet{\mathcalboondox}{U}{BOONDOX-calo}{m}{n}
\SetMathAlphabet{\mathcalboondox}{bold}{U}{BOONDOX-calo}{b}{n}
\DeclareMathAlphabet{\mathbcalboondox}{U}{BOONDOX-calo}{b}{n}
\DeclareMathOperator{\dvol}{dv}
\title{Wave Map Null Form Estimates via Peter--Weyl Theory}
\author{Grigalius Taujanskas}
\address{Trinity Hall, Trinity Lane, Cambridge CB2 1TJ, UK.}
\email{taujanskas@dpmms.cam.ac.uk}
\begin{document}

\begin{abstract}
	We study spacetime estimates for the wave map null form $Q_0$ on $\mathbb{R} \times \mathbb{S}^3$. By using the Lie group structure of $\mathbb{S}^3$ and Peter--Weyl theory, combined with the time-periodicity of the conformal wave equation on $\mathbb{R} \times \mathbb{S}^3$, we extend the classical ideas of Klainerman and Machedon to estimates on $\mathbb{R} \times \mathbb{S}^3$, allowing for a range of powers of natural (Laplacian and wave) Fourier multiplier operators. A key difference in these curved space estimates as compared to the flat case is a loss of an arbitrarily small amount of differentiability, attributable to a lack of dispersion of linear waves on $\mathbb{R} \times \mathbb{S}^3$. This arises in Fourier space from the product structure of irreducible representations of $\mathrm{SU}(2)$. We further show that our estimates imply weighted estimates for the null form on Minkowski space.
\end{abstract}

\maketitle

\setcounter{tocdepth}{1}
\tableofcontents

\section{Introduction}

Motivated by the study of nonlinear geometric wave equations with null structure on curved spacetimes, in this paper we study spacetime estimates for the null form $Q_0(\phi,\psi) = \nabla_a \phi \nabla^a \psi$ for linear waves $\phi$, $\psi$ on $\mathbb{R} \times \mathbb{S}^3$. On flat spacetimes $\mathbb{R}^{1+n}$, the standard argument via energy estimates and the Sobolev embedding theorem \cite{Soggebook} for the local existence for a system of nonlinear wave equations
\begin{equation} \label{general_scalar_wave_equation} \Box_{\mathbb{R}^{1+n}} \phi^I = F^I(\phi, \partial \phi) \end{equation}
requires the initial data $(\phi^I, \partial_t \phi^I)|_{t=0}$ to be in $H^{s+1}(\mathbb{R}^n) \times H^s(\mathbb{R}^n)$ for $s > n/2$. The restriction $s > n/2$ is imposed by the need to embed $H^s(\mathbb{R}^n)$ in $C^0(\mathbb{R}^n)$, but it is not optimal, at least because Sobolev embeddings necessarily lose a small amount of regularity. For the purpose of illustration, let us for the moment specialise to the case $n=3$. In this case the minimum regularity required by the standard method is therefore $H^{5/2 + \epsilon}(\mathbb{R}^3) \times H^{3/2 + \epsilon}(\mathbb{R}^3)$. In \cite{PonceSideris1993}, Ponce and Sideris showed, using Strichartz estimates \cite{Strichartz1977} obtained by Marshall \cite{Marshall1981} and Pecher \cite{Pecher1985}, that the lower bound for the exponent $s$ may be reduced to $s > s(\ell) = \max\{1, (3\ell-5)/(2\ell-2)\}$, where $\ell$ is the growth exponent of $\partial \phi$ in $F(\phi, \partial \phi)$. Their result shows that as $\ell \to \infty$, the above requirement of $s > 3/2$ is approached, but for nonlinearities quadratic in $\partial \phi$, $\ell = 2$ (which are typical of wave equations arising in mathematical general relativity), the better bound of $s > 1$ holds. The structural properties of $F(\phi,\partial \phi)$ therefore play an important role in the question of regularity of solutions to \eqref{general_scalar_wave_equation}. The work \cite{PonceSideris1993} had in fact been inspired by a landmark result \cite{KlainermanMachedon1993,ErratumKlainermanMachedon93} of Klainerman and Machedon, who had been able to show by proving suitable spacetime estimates that the exponent $s=1$ can be achieved for nonlinearities satisfying the so-called \emph{classical null condition}. In fact, for such null nonlinearities the optimal exponent is predicted by scaling and is given by $s > s_{\text{crit}.} = n/2 -1$, an improvement of another half of a derivative. The case of the critical exponent $s=s_{\text{crit}.}$ is usually much more subtle for which well-posedness may or may not hold.

The classical null condition is the requirement that the nonlinearity $F^I$ be a linear combination of the \emph{null forms}\footnote{As the nonlinearities in the Yang--Mills and Maxwell--Klein--Gordon equations are of the type $Q_{\alpha \beta}$, the null forms \eqref{null_forms_ab} are sometimes referred to as Yang--Mills type \cite{KlainermanSelberg2002}. Similarly, the null form $Q_0$ is common to wave map systems, and is occasionally referred to as the wave map null form. This taxonomy is not strict, however, since $Q_0$ for example appears both in the Yang--Mills equations in Lorenz gauge \cite{SelbergTesfahun2016} and the Einstein--Klein--Gordon equations \cite{LeFlochMa2016}.}
\begin{align}
	\label{null_forms_0}
	& Q_0(\phi, \psi) = \nabla_a \phi \nabla^a \psi, \\
	\label{null_forms_ab}
	& Q_{\alpha \beta}(\phi, \psi) = \nabla_\alpha \phi \nabla_\beta \psi - \nabla_\beta \phi \nabla_\alpha \psi.
\end{align}
Klainerman and Machedon's result \cite{KlainermanMachedon1993} that for such a nonlinearity a local solution exists for initial data merely in  $H^2(\mathbb{R}^3) \times H^1(\mathbb{R}^3)$ is in a certain sense sharp: indeed, Ponce and Sideris show that for $\ell \geq 3$ the $H^{s(\ell)+1}(\mathbb{R}^3) \times H^{s(\ell)}(\mathbb{R}^3)$ norm of the initial data does not in general control the time of existence of the solution, and an example due to Lindblad \cite{Lindblad1993,Lindblad1996} shows that for general nonlinearities (e.g. $F(\phi, \partial \phi) = (\partial_t \phi)^2$ for a scalar equation) a local solution may fail to exist for data in $H^{s+1}(\mathbb{R}^3) \times H^{s}(\mathbb{R}^3)$ when $s \leq 1$. On the other hand, for equations with nonlinearity $Q_0$ (called wave map type equations), Klainerman--Machedon \cite{KlainermanMachedon1995a} and Klainerman--Selberg \cite{KlainermanSelberg1997} (see also Bourgain \cite{Bourgain93a,Bourgain93b}) subsequently improved the small data local existence theorem to the full subcritical range, i.e. $s > 1/2$ in three space dimensions.

The null condition additionally plays a role in the \emph{global} existence of solutions to nonlinear wave equations. For a scalar equation with $F(\phi, \partial \phi) = (\partial_t \phi)^2$, although local solutions exist when $s > 1$, John \cite{John1981} showed that they in fact exhibit finite time blow-ups even when the initial data is arbitrarily small, smooth and compactly supported. By contrast, Nirenberg (\cite{Soggebook}, \S5) observed that when $F(\phi, \partial \phi) = Q_0(\phi,\phi)$, the equation can be transformed into a linear equation via the transformation $\Phi = 1 - \e^{-\phi}$, i.e. $\phi = - \log(1-\Phi)$, so that $\phi$ is global provided $\Phi$ remains small enough. This can be guaranteed by making the initial data sufficiently small. More generally, it is known that for sufficiently small, sufficiently regular initial data solutions to nonlinear wave equations with null nonlinearities are global \cite{Klainerman1980,Klainerman1982,Klainerman1986,Christodoulou1986}. The reason Klainerman's null condition is responsible for such dramatic improvement in the behavior of solutions to nonlinear wave equations is tied to rates of decay of the solution along different directions. For the \emph{linear} wave equation with smooth compactly supported data in three space dimensions, the derivatives tangent to the future lightcone $\{ t = r \}$ decay like $t^{-2}$. The derivative normal to the lightcone, $(\partial_t - \partial_r) \phi$, however, decays only like $t^{-1}$. The null condition is precisely the condition that ensures that the nonlinearity $F^I(\phi, \partial \phi)$ (if it is quadratic in $\partial \phi$) consists of terms which have at most one derivative in the direction normal to the lightcone, and hence better decay. Klainerman and Machedon \cite{KlainermanMachedon1993} observed that in Fourier space this manifests as certain cancellations along the lightcone, a fact that will play an important role in the proofs of our main theorems. The null condition has by now been utilised to great effect to prove, for instance, the global existence of finite energy\footnote{Finite energy data corresponds to the exponent $s=0$. For the Yang--Mills and Maxwell--Klein--Gordon equations, however, the nonlinearities are of the form $F = (-\Delta)^{-1/2}Q$, which allows one to apply the results discussed above, with $s=1$ and $F = Q$.} solutions to the Maxwell--Klein--Gordon and Yang--Mills equations \cite{KlainermanMachedon1994,KlainermanMachedon1995,SelbergTesfahun2013,SelbergTesfahun2016} and the almost optimal well-posedness of the Maxwell--Dirac system \cite{DAnconaFoschiSelberg}, to name a few examples. In the context of global solutions to nonlinear wave equations of the type \eqref{general_scalar_wave_equation}, we mention here also the weak null condition of Lindblad and Rodnianski \cite{LindbladRodnianski2003} and Keir's quite general hierarchical weak null condition \cite{Keir2018,Keir2019} based on the notion of H\"ormander's asymptotic systems \cite{Hormander1987,Hormander1997}.

The specific case of wave maps arises as the simplest nonlinear geometric wave equation and plays various roles in mathematical physics. For example, equations for certain gauges of the Yang--Mills equations are wave maps, as are subsystems of Einstein's equations for spacetimes with two Killing vectors. In various branches of high energy physics wave maps appear under the name of nonlinear $\sigma$ models. On flat backgrounds the mathematical theory of wave maps is very well studied: we have already mentioned the works of Bourgain, Klainerman--Machedon and Klainerman--Selberg on subcritical well-posedness in space dimensions $n \geq 2$ (see also \cite{Zhou97a,Zhou97b}). In space dimension $n=1$ subcritical well-posedness was proved by Keel and Tao \cite{KeelTao98}. Around the turn of the century the critical case was the subject of a number of works by Tao, Klainerman--Rodnianski, Tataru \cite{Tao2000a,Tao2000b,KlainermanRodnianski2001,Tataru2001} and many others, culminating in proofs, for reasonable target manifolds, of global well-posedness for small data in homogeneous Sobolev spaces for $n \geq 2$. D'Ancona and Georgiev \cite{DAnconaGeorgiev2005} showed that in the supercritical regime there is ill-posedness in all dimensions; Tao found that there is also ill-posedness for $n=1$ at the critical exponent \cite{Tao2000c}. 

The curved background case, on the other hand, is much less well understood. Geba \cite{Geba2009} was able to obtain the local well-posedness in just subcritical Sobolev spaces on $(\mathbb{R}^{1+n}, g)$, $3 \leq n \leq 5$, with $g$ a small perturbation of the Minkowski metric, building on the work of Smith and Tataru \cite{SmithTataru2005}. Lawrie \cite{Lawrie2012} used Metcalfe and Tataru's \cite{MetcalfeTataru2012} dispersive Strichartz estimates for variable coefficient wave equations to prove critical well-posedness on a class of restricted perturbations of Minkowski space with space dimension $n \geq 4$. Shatah and Struwe's moving frame approach \cite{ShatahStruwe2002} to prove critical well-posedness on flat backgrounds for $n \geq 4$ has more recently been adapted by Lawrie--Oh--Shahsahani \cite{LawrieOhShahshahani2018} to obtain global well-posedness and scattering for wave maps from $\mathbb{R} \times \mathbb{H}^n$, $n \geq 4$, where $\mathbb{H}^n$ is the $n$-dimensional hyperbolic space, with small data in the critical Sobolev norm. Even more recently, almost critical well-posedness on low regularity curved backgrounds of the form $(\mathbb{R}^{1+2}, g)$ was obtained in the work of Gavrus--Jao--Tataru \cite{GavrusJaoTataru2021}, where the authors use \emph{wave packet}\footnote{These are square-summable pieces of a wave which are localized in both space and frequency on the scale of the uncertainty principle \cite{CordobaFefferman1978} and propagate along null directions.} approximations of solutions to the linear wave equation to obtain the required null form estimates. The wave packet expansions of Gavrus--Jao--Tataru appear to play a role similar to that played by the Peter--Weyl Fourier series in the present work, in which we focus on the crucial estimates on the spatially compact background $\mathbb{R} \times \mathbb{S}^3$.

The key observation which allows one to obtain the improved spacetime estimates for the null nonlinearities on $\mathbb{R}^{1+3}$ is the fact that in Fourier space travelling waves exhibit cancellations which depend on the angle between their spacetime frequencies. The proof then proceeds by employing a positive/negative frequency splitting and applying Plancherel's theorem in spacetime\footnote{A physical space approach has also been introduced by Klainerman, Rodnianski and Tao \cite{KlainermanRodnianskiTao2002}.}. Since standard Fourier techniques rely heavily on the structure of the real numbers, it is difficult to adapt these techniques to curved settings. Using the theory of Fourier integral operators, Sogge \cite{Sogge1993} and Georgiev--Schirmer \cite{GeorgievSchirmer1995} were able to obtain the basic local estimate on spacetimes of the form $\mathbb{R} \times K$, $K$ a smooth compact manifold, by working in a small enough neighbourhood to flatten the metric. However, the extended estimates involving Fourier multiplier operators corresponding to the Laplacian and the wave operator, of the type obtained by Foschi and Klainerman \cite{FoschiKlainerman2000}, do not appear to have been studied even when the background is $\mathbb{R} \times \mathbb{S}^3$.

The main novelty of this paper is to introduce a global geometric approach to study spacetime estimates for the null form \eqref{null_forms_0} on backgrounds of the form $\mathbb{R} \times \mathrm{G}$, where $\mathrm{G}$ is a compact Lie group. We specialise to the particular case of $\mathrm{G} = \mathrm{SU}(2)$ and exploit the Lie group structure of $\mathbb{S}^3 \simeq \mathrm{SU}(2)$, together with the observation that solutions to the free conformal wave equation on $\mathbb{R} \times \mathbb{S}^3$ are periodic in time. The continuous Fourier transform is now replaced by discrete Fourier series, with irreducible representations of $\mathrm{SU}(2)$ taking the place of $\e^{ix\cdot \xi}$. We prove estimates in $L^2$ of spacetime for $(1-\slashed{\Delta})^{-\beta_0/2} (2+\Box)^{\beta_w} Q_0(\phi,\psi)$ for a range of exponents $\beta_0$ and $\beta_w$ akin to the flat space estimates of Foschi \& Klainerman \cite{FoschiKlainerman2000}. A consequence of this method, attributable to the lack of dispersion of linear waves on $\mathbb{R} \times \mathbb{S}^3$, is that there is an arbitrarily small loss of regularity in our estimates, which we can trace down to the fact that irreducible representations of $\mathrm{SU}(2)$ are not one-dimensional, i.e. $\mathrm{SU}(2)$ is non-abelian. Our estimates are valid for large times on $\mathbb{R} \times \mathbb{S}^3$, and in particular imply global weighted estimates on $\mathbb{R}^{1+3}$ with respect to a hyperboloidal foliation.

Estimates for $Q_{\alpha \beta}$, like  the ones presented in this paper for $Q_0$, appear in principle obtainable by similar methods. Here, however, we rely on the fact that the null form $Q_0$ is a scalar quantity, and moreover that $Q_0(\phi, \psi)$ is essentially $\Box(\phi \psi)$. This structure is absent for $Q_{\alpha \beta}$. Moreover, a natural geometric interpretation of $Q_{0i}(\phi, \psi)$ and $Q_{ij}(\phi ,\psi)$ is as the 1-form $\partial_t \phi \, \d \psi - \partial_t \psi \, \d \phi$ and 2-form $\d \phi \wedge \d \psi$ respectively, and these are more difficult to define the Peter--Weyl Fourier transform of. One may of course contract these with a basis of left-invariant vector fields on $\mathrm{SU}(2)$ to produce scalar quantities, but these are non-constant, which introduces other complications.

\section{Structure of Paper and Notation}

\subsection{Structure}

The structure of the paper is as follows. In \Cref{sec:main_results} we state our main results. In \Cref{sec:classical_estimate} we then briefly recap the classical argument of \cite{KlainermanMachedon1993} on $\mathbb{R}^{1+3}$. In \Cref{sec:setup_on_cylinder} we set up the necessary theory and in \Cref{sec:basic_estimate_on_cylinder} prove the basic estimate on $\mathbb{R} \times \mathbb{S}^3$. We then generalize the basic estimate to include Fourier multipliers in \Cref{sec:estimates_with_multipliers}. In \Cref{sec:proof_of_first_corollary} we prove a forced version of the basic estimate, and in \Cref{sec:conformal_estimate} we derive a weighted estimate on Minkowski space. Finally we include a brief summary of the basic facts from Peter--Weyl theory in \Cref{sec:harmonicanalygroups}.

\subsection{Notation}

Our spacetime signature is $(+,-,-,-)$. We denote by $\Box$ the wave operator on $\mathbb{R} \times \mathbb{S}^3$ with the metric
\[ g_{ab}^{\mathbb{R} \times \mathbb{S}^3} = \d t^2 - g_{\mathbb{S}^3}, \]
where $g_{\mathbb{S}^3}$ is the standard round metric on $\mathbb{S}^3$. We will denote by $\dvol_{\mathbb{S}^3}$ the corresponding volume form on $\mathbb{S}^3$, and by $\d \mu = (2\pi^2)^{-1} \dvol_{\mathbb{S}^3}$ the normalised Haar measure on $\mathrm{SU}(2)$. To avoid ambiguity when referring to different spacetimes, we will denote by $\Box_{\mathbb{R}^{1+3}}$ the wave operator on $\mathbb{R}^{1+3}$ with the standard Minkowski metric $\eta_{ab}$. Similarly, we will sometimes write $\Box_{\mathbb{R} \times \mathbb{S}^3}$ in place of $\Box$. We denote by $\slashed{\Delta}$ the Laplacian on $\mathbb{S}^3$ and by $\slashed{\nabla}$ the Levi-Civita connection on $\mathbb{S}^3$. On scalars, $\Box$ will therefore be equal to $\partial_t^2 - \slashed{\Delta}$. To differentiate between objects on $\mathbb{R}^{1+3}$ and $\mathbb{R} \times \mathbb{S}^3$, we will denote the Minkowskian objects (i.e. on $\mathbb{R}^{1+3}$) with a tilde, e.g. $\tilde{\phi}$, and objects on $\mathbb{R} \times \mathbb{S}^3$ plainly, e.g. $\phi$. Hence, for example, $\tilde{t}$ and $\tilde{r}$ will denote the standard time and radial coordinates on Minkowski space, and $\tilde{\nabla}$ the Levi-Civita connection on $\mathbb{R}^{1+3}$. We will denote in bold, $\tilde{\boldsymbol{\nabla}}$, the spatial derivatives on Minkowski space, and we will use the notation $\langle x \rangle = (1+|x|^2)^{1/2}$. On both Minkowski space and the cylinder we will work with Fourier transforms. To this end we will denote with a hat, i.e. $\hat{\phi}$ or $\hat{\tilde{\phi}}$, the Fourier transforms in the \emph{space} variables, and with $\mathcal{F}$, e.g. $\mathcal{F}(\phi)$ or $\mathcal{F}(\tilde{\phi})$, the full spacetime Fourier transforms. To avoid ambiguity we will frequently explicitly include the frequency variables, as in $\mathcal{F}_{t\to \tau, x\to \xi}(\tilde{\phi})$. When performing estimates, by $f \la g$ we mean that $f \leq C g$ for some positive constant $C$, which may change from line to line, and may depend on quantities considered fixed, such as the geometry of the background or the Sobolev exponents involved. By $ f \simeq g$ we mean that there exists a constant $C$ such that $C^{-1} g \leq f \leq C g$. When working with Fourier series on $\mathbb{R} \times \mathbb{S}^3$, we assume a priori that the functions involved are smooth so that the series involved converge uniformly. The full function spaces are then recovered at the end by density. For a complex matrix $M$ we denote by $\vertiii{M}^2 = \operatorname{Tr}(M M^\dagger) = \sum_{i,j} |M_{ij}|^2$ its Frobenius norm. 

\subsection{Fractional weighted Sobolev spaces $W^s_\delta$} Via a conformal compactification of $\mathbb{R}^3$, the standard (fractional) Sobolev spaces $H^s$ on $\mathbb{S}^3$ give rise to weighted Sobolev spaces, which we call $W^s_\delta$, on $\mathbb{R}^3$ as follows. Compactify $\mathbb{R}^3$ by 
\[ g_{\mathbb{S}^3} = \omega^2 g_{\mathbb{R}^3}, \]
where $\omega = 2 \langle \tilde{r} \rangle^{-2}$, with $\tilde{r}$ the standard radial coordinate on $\mathbb{R}^3$ and $\langle \tilde{r} \rangle = (1+\tilde{r}^2)^{1/2}$. In terms of angles $(z,\theta,\varphi)$ on $\mathbb{S}^3$, we have $z=2\arctan \tilde{r}$ and $g_{\mathbb{S}^3} = \d z^2 + \sin^2 z (\d \theta^2 + \sin^2 \theta \d \varphi^2)$, with $\omega = 2 \cos^2(z/2)$. The volume forms on $\mathbb{S}^3$ and $\mathbb{R}^3$ are then related by $\dvol_{\mathbb{S}^3} = \omega^3 \dvol_{\mathbb{R}^3} = 8\langle \tilde{r} \rangle^{-6} \dvol_{\mathbb{R}^3}$. We define the Sobolev spaces $H^s(\mathbb{S}^3)$, $s \in \mathbb{R}$, as usual, using the norm $\| f \|_{H^s} = \| (1-\slashed{\Delta})^{s/2} f \|_{L^2}$. For $\tilde{f}: \mathbb{R}^3 \to \mathbb{R}$ we then say 
\begin{equation}
	\label{weighted_spaces_definition}
	 \tilde{f} \in W^s_{\delta}(\mathbb{R}^3) \iff \| \tilde{f} \|_{W^s_\delta(\mathbb{R}^3)} \overset{\text{def}}{=} \| \omega^{-(\delta +3 - s)/2} \tilde{f} \|_{H^s(\mathbb{S}^3)} < \infty ,
\end{equation}
the power of $\omega$ being chosen in such a way that for integer $s$ this definition is compatible with the usual weighted spaces $H^s_\delta(\mathbb{R}^3)$ (cf. \cite{ChoquetBruhatChristodoulou1981,Christodoulou1986,GeorgievSchirmer1995}). Indeed, it is straightforward to check that when $s \in \mathbb{N}_0$,
\[ \|\tilde{f} \|^2_{W^s_\delta(\mathbb{R}^3)} \la \| \tilde{f} \|^2_{H^s_\delta(\mathbb{R}^3)} \overset{\text{def}}{=} \sum_{k=0}^s \int_{\mathbb{R}^3} \langle \tilde{r} \rangle^{2\delta + 2k} |\tilde{\boldsymbol{\nabla}}^k \tilde{f}|^2 \dvol_{\mathbb{R}^3}. \]

\section{Main Results} \label{sec:main_results}

We study solutions $\phi$, $\psi$ of the wave equations
\begin{equation} \label{wave_equations} \hspace{-20pt} \Box_{\mathbb{R} \times \mathbb{S}^3} \phi + \phi = F, \hspace{66pt} \Box_{\mathbb{R} \times \mathbb{S}^3} \psi + \psi = G 	
\end{equation}
with initial data
\begin{equation} \label{initial_data} (\phi, \partial_t \phi)|_{t=0} = (f_0, f_1), \qquad \qquad (\psi, \partial_t \psi)|_{t=0} = (g_0, g_1).
\end{equation}
Recall that the standard energy inequality for linear wave equations \eqref{wave_equations} states (see e.g. \cite{HormanderVol3}, Lemma 23.2.1) that $\forall  \sigma \in \mathbb{R}$
\begin{align}
	\begin{split}
	\label{energy_inequality}
	\| \phi (t,\cdot)\|_{H^{\sigma+1}(\mathbb{S}^3)} &+ \| \partial_t \phi(t,\cdot) \|_{H^\sigma(\mathbb{S}^3)} \la \| f_0 \|_{H^{\sigma+1}(\mathbb{S}^3)} + \| f_1 \|_{H^\sigma(\mathbb{S}^3)} + \int_0^t \| F(s,\cdot) \|_{H^\sigma(\mathbb{S}^3)} \, \d s.
	\end{split}
\end{align}
We will consider the null form 
\[ Q_0(\phi,\psi) \overset{\text{def}}{=} \nabla_a \phi \nabla^a \psi = \partial_t \phi \partial_t \psi - \slashgrad \phi \cdot \slashgrad \psi, \]
but before stating our results we make a few remarks. 

It is natural to consider the operator $\Box_{\mathbb{R} \times \mathbb{S}^3} + 1$, instead of $\Box_{\mathbb{R} \times \mathbb{S}^3}$, for at least two reasons. First, $(\Box_{\mathbb{R} \times \mathbb{S}^3} + 1 )\phi = F$ is the conformal analogue of the equation $\Box_{\mathbb{R}^{1+3}} \tilde{\phi} = \tilde{F}$, where $\tilde{\phi}$ and $\phi$ are related by the rescaling $\tilde{\phi} = \Omega^{-1} \phi$, the Minkowski metric $\eta_{ab}$ is related to the metric $g^{\mathbb{R} \times \mathbb{S}^3}_{ab}$ on the cylinder $\mathbb{R} \times \mathbb{S}^3$ by the conformal transformation $g^{\mathbb{R} \times \mathbb{S}^3}_{ab} = \Omega^2 \eta_{ab}$, and $\tilde{F} = \Omega^3 F$, where $\Omega$ is the conformal factor given in \eqref{conformal_factor}. Second, as we will see, solutions to the free equation $(\Box_{\mathbb{R} \times \mathbb{S}^3} + 1 )\phi = 0$ are periodic in time, which makes the time Fourier variable discrete. This will be important for our estimates since the space Fourier variables on $\mathbb{S}^3$ will also be discrete.

Moreover, it is classical (see the proof of \Cref{cor:inhomogeneous_basic_estimate}) that estimates for the inhomogeneous equations \eqref{wave_equations}--\eqref{initial_data} may be obtained from estimates for \eqref{free_equations_with_time_data_2} below, by applying Duhamel's principle. Therefore consider two solutions $\phi$, $\psi$ to the free equations
\begin{align}
\label{free_equations_with_time_data_2}
\begin{split}
	& \Box_{\mathbb{R} \times \mathbb{S}^3} \phi + \phi = 0, \hspace{61pt} \Box_{\mathbb{R} \times \mathbb{S}^3} \psi + \psi = 0, \qquad \text{with} \\
	& (\phi, \partial_t \phi)|_{t=0} = (0, f), \qquad \qquad (\psi, \partial_t \psi)_{t=0} = (0, g).
\end{split}
\end{align} We prove the following main theorems:

\begin{theorem}[Basic Estimate] \label{thm:basic_estimate} For any $\epsilon > 0$ there exists a universal constant $C = C(\epsilon) > 0$ such that solutions $\phi$, $\psi$ to \eqref{free_equations_with_time_data_2} satisfy the estimate
\[ \| Q_0(\phi, \psi) \|^2_{L^2([-\pi,\pi] \times \mathbb{S}^3)} \leq C \| f \|^2_{H^{1}(\mathbb{S}^3)} \| g \|^2_{H^\epsilon(\mathbb{S}^3)}. \]
Here the time interval $[-\pi, \pi]$ may also be replaced with any other interval, e.g. $[0,T]$ for any $T>0$, when the constant $C = C(\epsilon, T)$ then depends only on $\epsilon$ and $T$.
\end{theorem}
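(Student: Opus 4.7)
The plan is to exploit the algebraic identity
\[ Q_0(\phi,\psi) = \tfrac{1}{2}(\Box + 2)(\phi\psi), \]
which follows from $\Box(\phi\psi) = \psi\Box\phi + \phi\Box\psi + 2Q_0(\phi,\psi)$ combined with the equations $(\Box+1)\phi = (\Box+1)\psi = 0$. The task then reduces to bounding $\|(\Box+2)(\phi\psi)\|_{L^2([-\pi,\pi]\times\mathbb{S}^3)}$. The next step is to expand via Peter--Weyl: the $m$th $\mathrm{SU}(2)$-isotypic subspace $V_m \subset L^2(\mathbb{S}^3)$ is the $-\slashed{\Delta}$-eigenspace with eigenvalue $m(m+2) = (m+1)^2 - 1$, so the free solutions to \eqref{free_equations_with_time_data_2} take the form
\[ \phi(t,x) = \sum_{m\geq 0} \frac{\sin((m+1)t)}{m+1} f_m(x), \qquad \psi(t,x) = \sum_{n\geq 0} \frac{\sin((n+1)t)}{n+1} g_n(x), \]
with $f_m = P_m f$, $g_n = P_n g$; crucially, these are automatically $2\pi$-periodic in time.

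Applying $\sin(\alpha)\sin(\beta) = \tfrac{1}{2}(\cos(\alpha-\beta) - \cos(\alpha+\beta))$ expresses $\phi\psi$ as a double sum over the time frequencies $\pm(m-n)$ and $\pm(m+n+2)$. The Clebsch--Gordan decomposition $\pi_m \otimes \pi_n \simeq \bigoplus_{k=|m-n|}^{m+n} \pi_k$ (in steps of two) then produces $f_m g_n = \sum_k (f_m g_n)_k$, with $(f_m g_n)_k \defeq P_k(f_m g_n) \in V_k$, while $\Box + 2$ acts on $\cos(\tau t)h_k$ for $h_k \in V_k$ as multiplication by the symbol $\sigma(k,\tau) \defeq (k+1)^2 - \tau^2 + 1$. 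Plancherel in the discrete time frequency together with the orthogonal spatial decomposition $L^2(\mathbb{S}^3) = \bigoplus_k V_k$ yields
\[ \|(\Box+2)(\phi\psi)\|^2_{L^2} \lesssim \sum_{\tau \in \mathbb{Z},\, k \geq 0} \Big\| \sum_{(m,n) \in A(\tau,k)} \frac{\sigma(k,\tau)}{(m+1)(n+1)} (f_m g_n)_k \Big\|^2_{L^2(V_k)}, \]
where $A(\tau,k)$ enumerates pairs $(m,n)$ with $\tau \in \{\pm(m-n), \pm(m+n+2)\}$ and $|m-n| \leq k \leq m+n$ of the correct parity.

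The null structure is encoded in the size of $\sigma$: for the ``parallel'' interactions $\tau = \pm(m+n+2)$ at the top Clebsch--Gordan mode $k = m+n$, one computes $\sigma = -2(m+n)-2$, a full-order gain over the a priori bound $\sim (m+n)^2$. This is the $\mathbb{S}^3$-analogue of the flat-space cancellation $|\xi+\eta|^2 - (|\xi|+|\eta|)^2 = -2|\xi||\eta|(1-\cos\theta)$ for parallel waves, and is the $\mathbb{R}\times\mathbb{S}^3$ manifestation of the classical Klainerman--Machedon null mechanism. I would then invoke a bilinear Peter--Weyl projection estimate of the form
\[ \|(f_m g_n)_k\|_{L^2(\mathbb{S}^3)} \lesssim (k+1)^{\alpha} \|f_m\|_{L^2}\|g_n\|_{L^2}, \]
to be developed in \Cref{sec:setup_on_cylinder} (the polynomial factor in $k+1$ reflecting the matrix structure of $\mathrm{SU}(2)$-irreducibles), substitute it into the display above, and apply Cauchy--Schwarz to the $(m,n)$-summation with weights $(m+1)^2(n+1)^{2\epsilon}$. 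Assembling these should deliver
\[ \|Q_0(\phi,\psi)\|^2_{L^2} \lesssim \Big(\sum_m (m+1)^2 \|f_m\|^2_{L^2}\Big)\Big(\sum_n (n+1)^{2\epsilon}\|g_n\|^2_{L^2}\Big) = \|f\|^2_{H^1}\|g\|^2_{H^\epsilon}, \]
the arbitrarily small regularity loss on $g$ being forced by convergence of $\sum_n (n+1)^{-2\epsilon}$-type tails from the Cauchy--Schwarz step. The extension to an arbitrary interval $[0,T]$ follows immediately by covering with $\lceil T/(2\pi)\rceil$ translates of the fundamental period and invoking time-translation invariance of the free equation.

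The hardest part will be the delicate tripartite balancing of (i) the size of the null symbol $\sigma(k,\tau)$, (ii) the Peter--Weyl multiplicity $(k+1)^\alpha$ arising because $\mathrm{SU}(2)$ is non-abelian, and (iii) the propagator weights $(m+1)^{-1}(n+1)^{-1}$: only the cancellation at the top Clebsch--Gordan mode $k=m+n$ closes the estimate, and even so the non-abelian matrix structure obstructs its closure at $\epsilon=0$ and forces the inevitable loss of an arbitrarily small amount of differentiability.
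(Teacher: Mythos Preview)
Your overall architecture matches the paper's: the identity $Q_0=\tfrac12(\Box+2)(\phi\psi)$, the Peter--Weyl/Clebsch--Gordan expansion, Plancherel in discrete time, and a H\"older/Cauchy--Schwarz step that forces the $\epsilon$-loss. However, the two technical ingredients you single out are not the ones that actually close the estimate.

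First, the ``bilinear Peter--Weyl projection estimate'' you invoke in pointwise form $\|(f_mg_n)_k\|\lesssim (k+1)^\alpha\|f_m\|\|g_n\|$ is not what is used (and in that form, with a pure power of $k+1$ independent of $m,n$, does not hold). The paper's key device (\Cref{lemma:discrete_convolution_bound_CG_orthogonality}) is a \emph{summed} bound coming from the orthogonality relations of Clebsch--Gordan coefficients: in your variables it reads $\sum_k (k+1)\|(f_mg_n)_k\|^2\leq (m+1)(n+1)\|f_m\|^2\|g_n\|^2$. This summed form is essential because the proof pulls the symbol out of the $k$-sum and then collapses $\sum_k$ via this orthogonality, rather than estimating each $k$-term separately.

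Second, your emphasis on the cancellation at the \emph{top} Clebsch--Gordan mode $k=m+n$ is misplaced. For the $(+,+)$ interaction $\tau=m+n+2$ one has $|\sigma(k,\tau)|\sim (m+n)(m+n-k+1)$, so the symbol is \emph{smallest} at $k=m+n$ and \emph{largest} at the bottom mode $k=|m-n|$, where $|\sigma|\sim (m+1)(n+1)$. The paper's mechanism (\Cref{lem:behaviour_lambda}) is that $\lambda(k)=|\sigma(k,\tau)|^2/(k+1)$ is monotone on the CG range, so one may replace it by its value at the boundary $k=|m-n|$ and then apply the summed CG orthogonality to perform $\sum_k$. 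It is this combination---monotonicity of the symbol to localise at the CG boundary, followed by the summed orthogonality lemma---that makes the H\"older splitting in the $(m,n)$-sum close with only the $\epsilon$-loss. Your sketch, which tries to balance a pointwise $(k+1)^\alpha$ against the top-mode gain, does not supply this and would not close as written.
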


\begin{remark}
	The arbitrarily small loss of differentiability $\epsilon$ on the right-hand side of the above estimate arises, as we will see, from the product structure of Fourier modes on $\mathrm{SU}(2)  = \mathbb{S}^3$. The unitary dual of $\mathrm{SU}(2)$ is not an algebra (in contrast to the abelian case of e.g. $\mathbb{R}^3$), which results in products of irreducible representations of $\mathrm{SU}(2)$ (we will use Wigner's $\mathrm{D}$-matrices, which are closely related to the usual spin-weighted spherical harmonics ${}_s Y_{lm}$) having non-trivial Clebsch--Gordan expansions. It is these expansions which in the estimates interact with H\"older's inequality to produce the above loss. 
\end{remark}

\begin{remark}
	An immediate corollary of \Cref{thm:basic_estimate} for solutions of \eqref{wave_equations}--\eqref{initial_data} with $F = 0 = G$ is the estimate
	\begin{equation}
		\label{basic_estimate_2}
		\| Q_0(\phi, \psi) \|^2_{L^2([-\pi,\pi] \times \mathbb{S}^3)} \la \left( \| f_0 \|^2_{H^2(\mathbb{S}^3)} + \| f_1 \|^2_{H^1(\mathbb{S}^3)} \right) \left( \| g_0 \|^2_{H^{1+\epsilon}(\mathbb{S}^3)} + \|g_1 \|^2_{H^{\epsilon}(\mathbb{S}^3)} \right).
	\end{equation}
\end{remark}

\begin{theorem}[Estimates with Multipliers]
	\label{thm:estimates_with_multipliers}
	For solutions $\phi$, $\psi$ of \eqref{wave_equations}--\eqref{initial_data} there exists a universal constant $C = C(\alpha_1, \alpha_2, \beta_0, \beta_w) > 0$ such that the estimate
	\[ \| J^{-\beta_0} W^{\beta_w} Q_0(\phi,\psi) \|_{L^2([-\pi,\pi] \times \mathbb{S}^3)} \leq C \| f \|_{H^{\alpha_1}(\mathbb{S}^3)} \| g \|_{H^{\alpha_2}(\mathbb{S}^3)}, \]
	where $J = (1 - \slashed{\Delta})^{1/2}$ and $W = (2+ \Box)$, holds provided the following bounds are satisfied:
	\begin{align}
		\label{alpha_1_alpha_2_condition_1} \alpha_1 + \alpha_2 + \beta_0 &> 1 + 2\beta_w, \\
		\label{alpha_1_alpha_2_condition_2} \alpha_1 + \alpha_2 &\geq  1 + 2\beta_w, \\
		\label{alpha_1_alpha_2_condition_4} \alpha_1 + \beta_0 & \geq \beta_w, \\
		\label{alpha_1_alpha_2_condition_5} \alpha_2 + \beta_0 & \geq \beta_w,
	\end{align}
	and additionally
	\begin{equation}
		\beta_w \geq - 1
	\end{equation}
	and
	\begin{equation}
		\label{alpha_1_alpha_2_condition_3} -\frac{1}{2} \leq \beta_0 \leq 2\beta_w + \frac{3}{2}.
	\end{equation}
\end{theorem}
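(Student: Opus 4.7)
\medskip
\noindent\textbf{Proof proposal.} The starting point is the algebraic identity
\begin{equation*}
2 Q_0(\phi,\psi) = \Box(\phi\psi) - \phi\, \Box\psi - \psi\, \Box\phi.
\end{equation*}
For free solutions of $(\Box + 1)\phi = 0 = (\Box + 1)\psi$ this collapses to $2Q_0(\phi,\psi) = \Box(\phi\psi) + 2\phi\psi = W(\phi\psi)$, so
\begin{equation*}
J^{-\beta_0} W^{\beta_w} Q_0(\phi,\psi) \;=\; \tfrac{1}{2} J^{-\beta_0} W^{\beta_w + 1}(\phi \psi).
\end{equation*}
Because of this identity, the null character of $Q_0$ on the cylinder is encoded entirely in the size of $W$ on product modes, which is the Peter--Weyl analogue of the distance to the null cone. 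For the inhomogeneous data--forcing problem \eqref{wave_equations}--\eqref{initial_data}, I would reduce to this free situation by Duhamel's principle exactly as in \Cref{cor:inhomogeneous_basic_estimate}, so the task becomes estimating $\| J^{-\beta_0} W^{\beta_w + 1}(\phi\psi) \|_{L^2([-\pi,\pi]\times \mathbb{S}^3)}$ for free $\phi$, $\psi$ in terms of appropriate Sobolev norms of the velocity data.

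Next I would expand $\phi, \psi$ in Peter--Weyl series, so that with level indices $\ell \in \mathbb{N}_0$ and matrix coefficients $\mathrm{D}^\ell_{mn}$ of the spin-$\ell/2$ irreducible representation, the free-solution decomposition is schematically $\phi(t,x) = \sum_\ell (\ell+1)^{-1} \sin((\ell+1)t)\operatorname{Tr}(\hat F_\ell\, \mathrm{D}^\ell(x))$, with $J^2$ acting as $(\ell+1)^2$ on the $\ell$-th block. The product $\mathrm{D}^{\ell_1}_{m_1 n_1}\, \mathrm{D}^{\ell_2}_{m_2 n_2}$ is then re-expanded via Clebsch--Gordan as a sum over $L \in \{|\ell_1 - \ell_2|, \ldots, \ell_1 + \ell_2\}$ of $\mathrm{D}^L_{MN}$ weighted by two Clebsch--Gordan coefficients, and the time factor $\sin((\ell_1+1)t)\sin((\ell_2+1)t)$ produces the discrete frequencies $\tau \in \{\pm(\ell_1 - \ell_2),\ \pm(\ell_1+\ell_2+2)\}$. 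On the joint mode of level $L$ and time frequency $\tau$, $W$ multiplies by $1 - \tau^2 + (L+1)^2$; this is $O(1)$ when $\tau = \pm(\ell_1 + \ell_2 + 2)$ and $L$ is close to the top of the Clebsch--Gordan range (the null interaction), and is comparable to $(L+1)^2$ otherwise.

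From here the plan is essentially accounting. Applying Plancherel, $\| J^{-\beta_0} W^{\beta_w + 1}(\phi\psi)\|_{L^2}^2$ becomes a sum over $(L,M,N,\tau)$ weighted by $(L+1)^{-2\beta_0}(1 - \tau^2 + (L+1)^2)^{2(\beta_w+1)}$. I would use the orthogonality $\sum_{m_1,m_2} |C^{L,M}_{\ell_1 m_1; \ell_2 m_2}|^2 = 1$ together with Cauchy--Schwarz to separate the sums over $(\ell_1,\ell_2)$ from the sum over the output mode, pulling the weights $(\ell_1+1)^{-2\alpha_1}(\ell_2+1)^{-2\alpha_2}$ out to dominate $\| f\|_{H^{\alpha_1}}^2 \| g \|_{H^{\alpha_2}}^2$. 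The exponent conditions then correspond to convergence of the resulting multiply-indexed sum in different asymptotic regimes: condition \eqref{alpha_1_alpha_2_condition_1} is the summability condition along the null cone responsible for the strict inequality (and for the $\epsilon$-loss already present in \Cref{thm:basic_estimate}); conditions \eqref{alpha_1_alpha_2_condition_4}--\eqref{alpha_1_alpha_2_condition_5} govern the ``low-high'' regime where one of $\ell_1, \ell_2$ dominates; \eqref{alpha_1_alpha_2_condition_3} pins down the range in which the $J^{-\beta_0}$ and $W^{\beta_w+1}$ weights can trade off. A clean packaging would be to establish the estimate at the extremal vertices of the polytope defined by these inequalities and recover the interior by monotonicity in $(\alpha_1,\alpha_2)$ and bilinear interpolation in $(\beta_0, \beta_w)$.

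The hardest step, in my view, is precisely the high-high near-null-cone contribution, where the smallness of $W^{\beta_w+1}$ must be played off against the Clebsch--Gordan multiplicities and the weight $(L+1)^{-2\beta_0}$. In the flat abelian setting this reduces to a one-dimensional integral along the light cone, but here one pays for the non-abelian structure of $\mathrm{SU}(2)$: the products of matrix coefficients spread across an $(L$-range) of width $\min(\ell_1,\ell_2)$ with Clebsch--Gordan weights one must control uniformly. Managing this spread is, I expect, both the main technical obstacle and the reason \eqref{alpha_1_alpha_2_condition_1} is strict while \eqref{alpha_1_alpha_2_condition_2}, \eqref{alpha_1_alpha_2_condition_4}, \eqref{alpha_1_alpha_2_condition_5} can be weak inequalities. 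Once this is in hand, the remaining cases reduce to elementary weighted series manipulations.
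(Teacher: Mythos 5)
Your overall route coincides with the paper's: the identity $2Q_0(\phi^\pm,\psi^\pm) = (2+\Box)(\phi^\pm\psi^\pm)$, Peter--Weyl expansion with discrete time frequencies, Clebsch--Gordan re-expansion of the product, Plancherel on $\mathbb{S}^1\times\mathbb{S}^3$, and Clebsch--Gordan orthogonality combined with a Cauchy--Schwarz over the Clebsch--Gordan spread. However, two of your quantitative claims conceal genuine gaps. First, the geometry of the multiplier is off: with integer frequencies, on the $(+,+)$ branch ($\tau = \ell_1+\ell_2+2$, $L \leq \ell_1+\ell_2$) one has $|1+(L+1)^2-\tau^2| \geq 2(\tau-1)$, so $W$ is of size $\ell_1+\ell_2$, not $O(1)$, at the top of the Clebsch--Gordan range, while on the $(\pm,\mp)$ branch its smallest value is $2+2|\ell_1-\ell_2|$, attained at the bottom of the range. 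More importantly, since $2+2\beta_w \geq 0$, small $W$ is \emph{favorable}; what must be controlled is the maximum over the Clebsch--Gordan range of the combined weight $\lambda(m) = (m+1)^{-1-2\beta_0}\,|1+(m+1)^2-n^2|^{2+2\beta_w}$, and this maximum sits at the end of the range \emph{farthest} from the cone, where it is of size roughly $(\ell_1\ell_2)^{2+2\beta_w}$. So identifying the "high-high near-null-cone" regime as the crux misplaces the difficulty.

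Second, and this is the substantive gap, your plan of "orthogonality plus Cauchy--Schwarz to separate the sums" does not say how to handle the $m$-dependence of that weight. The available orthogonality bound (the paper's \Cref{lemma:discrete_convolution_bound_CG_orthogonality}) controls only the \emph{unweighted} sum $\sum_m \vertiii{(m+1)\varpi^{+,\pm}_l(\pi_m)_n}^2$; to invoke it one must first prove that $\lambda(m;n,\beta_0,\beta_w)$ is monotone on the Clebsch--Gordan range and replace it by its endpoint value, and it is precisely this monotonicity that forces $\beta_w \geq -1$ and $\beta_0 \geq -\tfrac12$ on the $(+,+)$ branch and $-\tfrac12 \leq \beta_0 \leq 2\beta_w+\tfrac32$ on the $(+,-)$ branch (with the $n=0$ mode needing separate treatment). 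Your sketch never generates these hypotheses. Relatedly, the strict inequality \eqref{alpha_1_alpha_2_condition_1} does not come from "summability along the null cone" but from the H\"older splitting over the Clebsch--Gordan index $l$ (the spread of width $\min(\ell_1,\ell_2)$), whose admissible weights are governed by the convergence lemma \Cref{lem:aux_2}; that is also where \eqref{alpha_1_alpha_2_condition_2}, \eqref{alpha_1_alpha_2_condition_4}, \eqref{alpha_1_alpha_2_condition_5} arise. Once these steps are in place the direct estimate already covers the whole parameter region, so the proposed vertex-plus-interpolation packaging is unnecessary — and as stated it is not justified, since interpolating in the multiplier exponents $(\beta_0,\beta_w)$ would require an analytic-family argument you have not supplied.
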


\begin{remark} 
	We remark that \Cref{thm:estimates_with_multipliers} implies \Cref{thm:basic_estimate} if we set $\beta_w = 0$, $\beta_0 = 0$, $\alpha_1 = 1$, and $\alpha_2 = \epsilon$.
\end{remark}

As a proof of concept, we also show that our estimates imply the following corollaries; of course these are not the most general estimates implied by \Cref{thm:estimates_with_multipliers} (in particular we have set $\beta_0 = \beta_w = 0$ in \Cref{cor:weighted_estimates}, and $\beta_w = 0$, $\beta_0 = -\epsilon$ in \Cref{cor:inhomogeneous_basic_estimate}).

\begin{corollary}[Forced Basic Estimate] \label{cor:inhomogeneous_basic_estimate}
For any given $0 < \epsilon \leq \frac{1}{2}$ there exists a universal constant $C(\epsilon)>0$ such that solutions $\phi$, $\psi$ of \eqref{wave_equations}--\eqref{initial_data} satisfy the estimate
	\begin{align*} \| Q_0(\phi, \psi) \|_{L^2([-\pi,\pi]; H^{1+\epsilon}(\mathbb{S}^3))} & \leq C(\epsilon) \left( \| f_0 \|_{H^{2+\epsilon}(\mathbb{S}^3)} + \| f_1 \|_{H^{1+\epsilon}(\mathbb{S}^3)} + \int_{-\pi}^{\pi} \| F(t,\cdot) \|_{H^{1+\epsilon}(\mathbb{S}^3)} \, \d t \right) \\
	& \hspace{25pt} \times \left( \| g_0 \|_{H^{2+\epsilon}(\mathbb{S}^3)} + \| g_1 \|_{H^{1+\epsilon}(\mathbb{S}^3)} + \int_{-\pi}^{\pi} \| G(t,\cdot) \|_{H^{1+\epsilon}(\mathbb{S}^3)} \, \d t \right).
	\end{align*}
As before, the time interval $[-\pi, \pi]$ may also be replaced with any other interval, e.g. $[0,T]$ for any $T>0$, when the constant $C(\epsilon, T)$ then depends only on $\epsilon$ and $T$.
\end{corollary}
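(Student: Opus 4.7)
The plan is to combine Duhamel's principle with \Cref{thm:estimates_with_multipliers} and a time-integral version of Minkowski's inequality.

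First, decompose $\phi = \phi_h + \phi_f$, where $\phi_h$ is the homogeneous solution of $(\Box+1)\phi_h = 0$ with data $(f_0, f_1)$ at $t=0$, and
\[
	\phi_f(t, \cdot) = \int_0^t S(t-s) F(s, \cdot) \, \d s,
\]
with $S(\tau)$ the propagator sending velocity data $h$ to the value at time $\tau$ of the free solution with initial data $(0, h)$. Write $\psi = \psi_h + \psi_f$ analogously. By bilinearity, $Q_0(\phi, \psi) = \sum_{a, b \in \{h, f\}} Q_0(\phi_a, \psi_b)$, and each of the four summands must be estimated.

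For the purely homogeneous term, the general data $(f_0, f_1)$ is first reduced to the velocity-only form required by \Cref{thm:estimates_with_multipliers}. Writing $A = (1 - \slashed{\Delta})^{1/2}$, we have $\phi_h = \cos(tA) f_0 + A^{-1}\sin(tA) f_1$; the second summand is directly the free solution with velocity data $f_1$, while $\cos(tA) f_0 = \partial_t[A^{-1}\sin(tA) f_0]$ is the time derivative of the free solution with velocity data $A f_0$, and is therefore itself a free solution since $\partial_t$ commutes with $\Box + 1$. This substitution costs one spatial derivative, which accounts for the asymmetry between the $H^{2+\epsilon}$ norm of $f_0$ and the $H^{1+\epsilon}$ norm of $f_1$ in the statement. \Cref{thm:estimates_with_multipliers} is then applied with $\beta_w = 0$, $\beta_0 = -\epsilon$, and $\alpha_1 = \alpha_2 = 1+\epsilon$; the hypotheses \eqref{alpha_1_alpha_2_condition_1}--\eqref{alpha_1_alpha_2_condition_3} are satisfied precisely in the range $0 < \epsilon \leq \tfrac{1}{2}$, the upper bound on $\epsilon$ coming from the lower bound $\beta_0 \geq -\tfrac{1}{2}$ in \eqref{alpha_1_alpha_2_condition_3}.

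For the three terms involving Duhamel contributions, Minkowski's inequality pulls the time integral out of the spacetime norm. For the mixed term this gives
\[
	\bigl\| Q_0(\phi_f, \psi_h) \bigr\|_{L^2([-\pi,\pi]; H^{1+\epsilon})} \leq \int_{-\pi}^\pi \bigl\| Q_0\bigl( S(\cdot - s) F(s, \cdot), \psi_h \bigr) \bigr\|_{L^2([-\pi,\pi]; H^{1+\epsilon})} \, \d s .
\]
For each fixed $s$, the integrand is a bilinear form on two free solutions (the one with velocity data $F(s, \cdot)$ time-shifted by $s$, and $\psi_h$), controlled by the homogeneous step and producing a factor of $\|F(s, \cdot)\|_{H^{1+\epsilon}}$. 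The estimate is uniform in $s$ thanks to the time-translation invariance of $\Box+1$ on $\mathbb{R} \times \mathbb{S}^3$ (equivalently, the temporal periodicity of its free solutions). Integrating in $s$ yields the $\int_{-\pi}^\pi \|F(t, \cdot)\|_{H^{1+\epsilon}} \, \d t$ contribution in the statement. The symmetric mixed term $Q_0(\phi_h, \psi_f)$ is treated identically, and the doubly-forced term $Q_0(\phi_f, \psi_f)$ requires two nested applications of Minkowski's inequality to produce the product $\int \|F\| \cdot \int \|G\|$.

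The principal obstacle is the parameter bookkeeping: one must verify compatibility of the target Sobolev exponent on the left-hand side with the hypotheses \eqref{alpha_1_alpha_2_condition_1}--\eqref{alpha_1_alpha_2_condition_3} of \Cref{thm:estimates_with_multipliers}, while simultaneously accounting for the single spatial derivative consumed by the position-to-velocity data reduction and for the shift in the propagator $S(\cdot - s)$ relative to the periodicity interval $[-\pi,\pi]$. Once this bookkeeping is in place, the Duhamel and Minkowski reductions are routine.
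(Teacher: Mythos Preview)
There is a genuine gap in your argument. With the parameter choice $\beta_w = 0$, $\beta_0 = -\epsilon$, the left-hand side of \Cref{thm:estimates_with_multipliers} is
\[
\| J^{-\beta_0} W^{\beta_w} Q_0(\phi,\psi) \|_{L^2(\mathbb{S}^1 \times \mathbb{S}^3)} = \| J^{\epsilon} Q_0(\phi,\psi) \|_{L^2(\mathbb{S}^1 \times \mathbb{S}^3)} = \| Q_0(\phi,\psi) \|_{L^2([-\pi,\pi];H^{\epsilon}(\mathbb{S}^3))},
\]
not the $H^{1+\epsilon}$ norm that the corollary demands. You are one full spatial derivative short, and you cannot simply raise $-\beta_0$ to $1+\epsilon$: the constraint \eqref{alpha_1_alpha_2_condition_3} forces $\beta_0 \geq -\tfrac{1}{2}$, so \Cref{thm:estimates_with_multipliers} never directly controls $\|Q_0\|_{L^2 H^s}$ for $s > \tfrac{1}{2}$. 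Your Duhamel/Minkowski reduction for the forced terms and your position-to-velocity trick are both fine, but they sit on top of a homogeneous estimate that is too weak.

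The paper closes this gap by an additional differentiation step. One writes $\slashgrad_i Q_0(\phi,\psi) = Q_0(\slashgrad_i\phi,\psi) + Q_0(\phi,\slashgrad_i\psi)$ and tries to feed each term back into the $H^\epsilon$ estimate. The subtlety is that on $\mathbb{S}^3$ the covariant derivative $\slashgrad_i$ does not commute with $\slashed{\Delta}$, so $\slashgrad_i\phi$ satisfies the \emph{perturbed} equation $(\Box_{\mathbb{R}\times\mathbb{S}^3}+3)\slashgrad_i\phi = 0$ rather than the conformal wave equation. The paper handles this by a further Duhamel splitting $\slashgrad_i\phi = \Phi_1 + \Phi_2$, where $\Phi_1$ solves the correct equation with velocity data $\slashgrad_i f_1$ and $\Phi_2$ absorbs the curvature error $-2\slashgrad_i\phi$ as a forcing term; the latter is lower order and is closed using the energy inequality. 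Only after this ``first reduction'' is in place does the ``second reduction'' (your Duhamel-for-$F$ argument) apply.
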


\begin{remark}
	In fact, we can prove slightly more, and also control the time derivative of $Q_0$ in $L^2([-\pi,\pi];H^\epsilon(\mathbb{S}^3))$ at the expense of replacing the norms on $F$ and $G$ on the right-hand side with $L^2([-\pi,\pi];H^{1+\epsilon}(\mathbb{S}^3))$ instead of $L^1([-\pi,\pi];H^{1+\epsilon}(\mathbb{S}^3))$. See \cref{main_estimate_with_time_derivative}.
\end{remark}

\begin{corollary}[Weighted Estimate on Minkowski Space] \label{cor:weighted_estimates}
	Consider the linear waves $\Box_{\mathbb{R}^{1+3}} \tilde{\phi} = 0 = \Box_{\mathbb{R}^{1+3}} \tilde{\psi}$ with initial data $(\tilde{\phi}, \partial_{\tilde{t}} \tilde{\phi})|_{\tilde{t}=0} = (\tilde{f}_0, \tilde{f}_1)$ and $(\tilde{\psi}, \partial_{\tilde{t}} \tilde{\psi})|_{\tilde{t}=0} = (\tilde{g}_0, \tilde{g}_1)$, where $\tilde{t}$ is the standard time coordinate on $\mathbb{R}^{1+3}$. Then for any sufficiently small $\epsilon > 0$ the null form 
	\[ \widetilde{Q}_0(\tilde{\phi}, \tilde{\psi}) = \tilde{\nabla}_a \tilde{\phi} \tilde{\nabla}^a \tilde{\psi} = \partial_{\tilde{t}} \tilde{\phi} \partial_{\tilde{t}} \tilde{\psi} - \tilde{\boldsymbol{\nabla}} \tilde{\phi} \cdot \tilde{\boldsymbol{\nabla}} \tilde{\psi} \]
	satisfies the weighted estimate
	\[ \left\| \langle \tilde{t} - \tilde{r} \rangle \langle \tilde{t} + \tilde{r} \rangle \widetilde{Q}_0(\tilde{\phi},\tilde{\psi}) \right\|^2_{L^2(\mathbb{R}^4)} \la \left( \| \tilde{f}_0 \|^2_{W^2_1(\mathbb{R}^3)} + \| \tilde{f}_1 \|^2_{W^1_2(\mathbb{R}^3)} \right) \left( \| \tilde{g}_0 \|^2_{W^{1+\epsilon}_{\epsilon}(\mathbb{R}^3)} + \| \tilde{g}_1 \|^2_{W^{\epsilon}_{1+\epsilon}(\mathbb{R}^3)} \right). \] 
\end{corollary}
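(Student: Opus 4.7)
My plan is to transfer the problem from Minkowski space to the Einstein cylinder $\mathbb{R}\times\mathbb{S}^3$ by conformal compactification, apply the basic estimate \eqref{basic_estimate_2}, and translate the resulting cylinder Sobolev norms back into weighted Minkowski norms using the definition \eqref{weighted_spaces_definition} of $W^s_\delta(\mathbb{R}^3)$.

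Concretely, the compactification $\tilde t \pm \tilde r = \tan((t \pm z)/2)$ identifies $\mathbb{R}^{1+3}$ with the diamond $\{|t|+z < \pi\} \subset [-\pi, \pi] \times \mathbb{S}^3$, and the conformal factor $\Omega = \cos t + \cos z$ satisfies $g^{\mathbb{R}\times\mathbb{S}^3} = \Omega^2 \eta$, $\Omega \simeq \langle \tilde t - \tilde r \rangle^{-1} \langle \tilde t + \tilde r \rangle^{-1}$, and $\Omega|_{\tilde t = 0} = \omega$. Under the conformal rescaling between the Minkowski fields $\tilde\phi,\tilde\psi$ and their cylinder counterparts $\phi,\psi$, the free Minkowski waves become solutions of the conformal wave equation $(\Box + 1)\phi = 0$, to which \Cref{thm:basic_estimate} applies. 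The key algebraic observation is that, since $\Box_{\mathbb{R}^{1+3}}\tilde\phi = 0 = \Box_{\mathbb{R}^{1+3}}\tilde\psi$, we have $2\widetilde Q_0(\tilde\phi,\tilde\psi) = \Box_{\mathbb{R}^{1+3}}(\tilde\phi\tilde\psi)$, and the Yamabe-operator transformation law $\Box_{\mathbb{R}^{1+3}} = \Omega^3(\Box + 1)(\Omega^{-1}\,\cdot\,)$ then gives
\[
\widetilde Q_0(\tilde\phi, \tilde\psi) = \tfrac{1}{2}\,\Omega^3\,(\Box + 1)(\Omega \phi \psi).
\]
Combining $\langle\tilde t-\tilde r\rangle\langle\tilde t+\tilde r\rangle \simeq \Omega^{-1}$ with $\dvol_\eta = \Omega^{-4}\dvol_g$, the $\Omega^6$ from $|\widetilde Q_0|^2$ precisely cancels the $\Omega^{-6}$ coming from the weight and volume form, producing the clean bound
\[
\left\| \langle \tilde t - \tilde r \rangle \langle \tilde t + \tilde r \rangle \widetilde Q_0(\tilde\phi, \tilde\psi) \right\|^2_{L^2(\mathbb{R}^4)} \la \int_{[-\pi,\pi]\times \mathbb{S}^3}\!\! |(\Box+1)(\Omega\phi\psi)|^2 \dvol_g.
\]
Expanding $(\Box+1)(\Omega\phi\psi)$ by the Leibniz rule and using $(\Box+1)\phi = 0 = (\Box+1)\psi$ yields a main term $2\Omega\,Q_0(\phi,\psi)$ plus lower-order expressions involving $\phi\psi, \nabla(\phi\psi), \Box\Omega, \nabla\Omega$, all with bounded coefficients; the leading contribution $\int \Omega^2 |Q_0(\phi,\psi)|^2\dvol_g$ is estimated directly by \eqref{basic_estimate_2}, while the lower-order pieces are bounded via the spatial Sobolev product estimate $H^{s_1}(\mathbb{S}^3)\cdot H^{s_2}(\mathbb{S}^3)\hookrightarrow L^2(\mathbb{S}^3)$ for $s_1+s_2>3/2$, the energy inequality \eqref{energy_inequality}, and finiteness of the time interval.

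To conclude, I translate the cylinder-data norms into weighted Minkowski norms. A chain-rule calculation using $t = \arctan(\tilde t-\tilde r) + \arctan(\tilde t+\tilde r)$ gives $\partial_{\tilde t}|_{\tilde t=0} = \omega\, \partial_t|_{t=0}$, and, together with $\partial_t\Omega|_{t=0} = 0$ and the conformal rescaling, shows that $(\phi, \partial_t\phi)|_{t=0}$ equals $(\omega^{-1}\tilde f_0, \omega^{-2}\tilde f_1)$ up to bounded multiplicative factors. By \eqref{weighted_spaces_definition}, the choices $(s,\delta) = (2,1)$ and $(1,2)$ give $\|\tilde f_0\|_{W^2_1(\mathbb{R}^3)}$ and $\|\tilde f_1\|_{W^1_2(\mathbb{R}^3)}$, matching the first factor on the RHS of the corollary; the analogous identifications at $(1+\epsilon,\epsilon)$ and $(\epsilon,1+\epsilon)$ produce the weighted norms of $\tilde g_0, \tilde g_1$ in the second factor.

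The main technical obstacle I anticipate is controlling the lower-order pieces of $(\Box + 1)(\Omega \phi \psi)$: products like $\phi\nabla\psi$ or $\psi\nabla\phi$ must be estimated in $L^2(\mathbb{R}\times\mathbb{S}^3)$ when one factor lies in $H^2$ and the other only in $H^{1+\epsilon}$ for small $\epsilon$, despite the failure of $H^{1+\epsilon}(\mathbb{S}^3)\hookrightarrow L^\infty(\mathbb{S}^3)$ at such regularity. Using the Sobolev algebra estimate $H^{s_1}\cdot H^{s_2}\subset H^{s_1+s_2-3/2}$ on $\mathbb{S}^3$ one checks that, for instance, $\|\phi\nabla\psi\|_{L^2(\mathbb{S}^3)}\la \|\phi\|_{H^2}\|\psi\|_{H^{1+\epsilon}}$ (since $2+\epsilon>3/2$), and integrating in time via \eqref{energy_inequality} closes the estimate. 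The small $\epsilon$-loss built into \Cref{thm:basic_estimate} provides precisely the slack needed to absorb these lower-order contributions.
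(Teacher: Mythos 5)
Your proposal is correct and takes essentially the same approach as the paper: conformal transfer to the cylinder, the pointwise identity $\widetilde{Q}_0(\tilde{\phi},\tilde{\psi}) = \Omega^4 Q_0(\phi,\psi) + \Omega^3(\text{lower order})$ (which you obtain via $2\widetilde{Q}_0 = \Box_{\mathbb{R}^{1+3}}(\tilde{\phi}\tilde{\psi})$ and the conformal covariance of the wave operator, the paper by direct expansion --- the resulting identity \eqref{conformal_null_form_2} is the same), the basic estimate \eqref{basic_estimate_2} for the main term, energy-plus-Sobolev bounds for the lower-order terms, and the exact identifications $f_0 = \omega^{-1}\tilde{f}_0$, $f_1 = \omega^{-2}\tilde{f}_1$ giving $\|f_0\|_{H^2(\mathbb{S}^3)} = \|\tilde{f}_0\|_{W^2_1(\mathbb{R}^3)}$, etc. The only cosmetic difference is your use of bilinear Sobolev product estimates for the lower-order terms where the paper uses H\"older with $L^\infty$, $L^6$, $L^3$ and the embeddings $H^2 \hookrightarrow L^\infty$, $H^1 \hookrightarrow L^6$.
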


\begin{remark}
	The estimate of \Cref{cor:weighted_estimates} is a consequence of the basic estimate of \Cref{thm:basic_estimate}. \Cref{thm:estimates_with_multipliers} will imply similar, more general estimates on $\mathbb{R}^{1+3}$ with respect to a hyperboloidal foliation.
\end{remark}

\section{Classical Estimate on $\mathbb{R}^{1+3}$} \label{sec:classical_estimate}

In this section we briefly review the main elements of the proof in $\mathbb{R}^{1+3}$, as this will help to motivate the strategy on $\mathbb{R} \times \mathbb{S}^3$. For solutions $\tilde{\phi}$, $\tilde{\psi}$ of
\begin{equation} \label{wave_equation_flat} \Box_{\mathbb{R}^{1+3}} \tilde{\phi} = 0 = \Box_{\mathbb{R}^{1+3}} \tilde{\psi} \end{equation}
on $\mathbb{R}^{1+3}$ with data $(\tilde{\phi}, \partial_{\tilde{t}} \tilde{\phi})|_{\tilde{t}=0} = (0, \tilde{f})$, $(\tilde{\psi}, \partial_{\tilde{t}} \tilde{\psi})|_{\tilde{t}=0} = (0, \tilde{g})$, the key result obtained in \cite{KlainermanMachedon1993} is the estimate
\begin{equation}
\label{key_continuous_estimate}
\| \widetilde{Q}(\tilde{\phi}, \tilde{\psi}) \|_{L^2(\mathbb{R}^4)} \leq C \| \tilde{f} \|_{L^2(\mathbb{R}^3)} \| \boldsymbol{\nabla} \tilde{g} \|_{L^2(\mathbb{R}^3)},
\end{equation}
where $\widetilde{Q}$ is any of the classical null forms \eqref{null_forms_0}, \eqref{null_forms_ab} on $\mathbb{R}^{1+3}$. The method to obtain \eqref{key_continuous_estimate} may be broken down into the following steps. For the purpose of exposition we recap the case of $\widetilde{Q}_0(\tilde{\phi}, \tilde{\phi})$. We start with data $\tilde{f} \in \mathcal{S}(\mathbb{R}^3)$ such that its Fourier transform is smooth and compactly supported, and vanishes in a neighbourhood of the origin; these assumptions may be removed at the end by a density argument.

\subsection{Positive/negative frequency splitting} The Fourier transform in space of \eqref{wave_equation_flat} gives the solution $\hat{\tilde{\phi}}(\tilde{t}, \xi) = (\sin (|\xi|\tilde{t})/ |\xi| ) \hat{\tilde{f}}(\xi)$. One considers the positive and negative frequency parts of this solution,
	\[ \hat{\tilde{\phi}}^\pm(\tilde{t}, \xi) = \frac{e^{\pm i |\xi| \tilde{t}}}{|\xi|} \hat{\tilde{f}}(\xi), \]
	or
	\[ \tilde{\phi}^\pm(\tilde{t},\tilde{x}) = \frac{1}{(2 \pi)^3} \int_{\mathbb{R}^3} \frac{e^{\pm i |\xi| \tilde{t}}}{|\xi|} \hat{\tilde{f}}(\xi) e^{i \tilde{x} \cdot \xi} \, \d \xi \]
	in physical space, with the original solution given by $\tilde{\phi} = \frac{1}{2i} (\tilde{\phi}^+ - \tilde{\phi}^-)$. By the bilinearity of $\widetilde{Q}_0$, it is then enough to prove the estimate \eqref{key_continuous_estimate} for every pair $\widetilde{Q}_0^{\pm, \pm} \overset{\text{def}}{=} \widetilde{Q}_0(\tilde{\phi}^\pm, \tilde{\phi}^\pm)$. For brevity here we consider only $\widetilde{Q}_0^{+,+}$.
	
\subsection{Spacetime Fourier transform of $\tilde{\phi}^\pm$} Taking now the spacetime Fourier transform of, say, $\tilde{\phi}^+$ gives
	\[ \mathcal{F}_{\tilde{t} \to \tau, \, \tilde{x} \to \xi} (\tilde{\phi}^+ ) = \mathcal{F}_{\tilde{t} \to \tau} \left( \frac{e^{ i |\xi| \tilde{t}}}{|\xi|} \hat{\tilde{f}}(\xi) \right) = 2 \pi \frac{\hat{\tilde{f}}(\xi)}{|\xi|} \delta(\tau - |\xi|). \]
	Using the homogeneity of $\delta$, this can be rewritten as
	\begin{equation} \label{FT_of_initial_data} 2 \pi \frac{\hat{\tilde{f}}(\xi)}{|\xi|} \delta(\tau - |\xi|) = 4 \pi \hat{\tilde{f}}(\xi) \chi_{[0, \infty)}(\tau) \delta(\tau^2 - |\xi|^2). \end{equation}
	
\subsection{Convolution in spacetime} Using $\Box_{\mathbb{R}^{1+3}} \tilde{\phi} = 0$, one can rewrite the null form as $\widetilde{Q}_0^{+,+} = \nabla_a \tilde{\phi}^+ \nabla^a \tilde{\phi}^+ = \frac{1}{2} \Box_{\mathbb{R}^{1+3}} (\tilde{\phi}^+)^2$. The spacetime Fourier transform of $\widetilde{Q}_0^{+,+}$ is then
	\begin{equation} \label{inverse_convolution_flat_space} \mathcal{F}_{\tilde{t} \to \tau, \, \tilde{x} \to \xi}(\widetilde{Q}_0^{+,+}) = \frac{1}{2}(\tau^2 - |\xi|^2) \mathcal{F}_{\tilde{t} \to \tau, \, \tilde{x} \to \xi} ((\tilde{\phi}^+)^2)  = \frac{1}{2} (\tau^2  - |\xi|^2 ) \mathcal{F}_{\tilde{t} \to \tau, \, \tilde{x} \to \xi}(\tilde{\phi}^+) * \mathcal{F}_{\tilde{t} \to \tau, \, \tilde{x} \to \xi}(\tilde{\phi}^+). \end{equation} 
	To compute this convolution, one has the following lemma (the proof may be found in \cite{KlainermanMachedon1993}, p. 1231).
	\begin{lemma} \label{lemma:spacetime_convolution_lemma} The spacetime Fourier transform of $(\tilde{\phi}^+)^2$ is given by
	\[ \mathcal{F}_{\tilde{t} \to \tau, \, \tilde{x} \to \xi} ( (\tilde{\phi}^+)^2) = \frac{2 \pi^2}{\tau^2 - |\xi|^2} \int_{\mathbb{S}^2} \alpha^2 \hat{\tilde{f}}\left( \frac{\alpha}{2} \omega \right) \hat{\tilde{f}} \left( \xi - \frac{\alpha}{2} \omega \right) \d^2 \omega \]
	for $\tau \geq \xi$, and zero otherwise, where
	\[ \alpha = \alpha(\xi, \tau, \omega) = \frac{\tau^2 - |\xi|^2}{\tau - \xi \cdot \omega}. \]
	\end{lemma}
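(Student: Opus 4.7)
The plan is to combine the convolution theorem applied to $(\tilde{\phi}^+)^2$ with the explicit expression for $\mathcal{F}(\tilde{\phi}^+)$ given by \eqref{FT_of_initial_data}, and then use the two Dirac masses supported on the forward light cone to carry out a spherical-coordinate reduction which pins the radial variable to $\rho = \alpha/2$.

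Concretely, writing the convolution as an integral over $(\sigma,\eta) \in \mathbb{R} \times \mathbb{R}^3$ and substituting $\mathcal{F}(\tilde{\phi}^+)(\tau,\xi) = 2\pi \hat{\tilde{f}}(\xi)|\xi|^{-1} \delta(\tau - |\xi|)$ for each factor produces an integrand proportional to
\[ \frac{\hat{\tilde{f}}(\eta)\,\hat{\tilde{f}}(\xi-\eta)}{|\eta|\,|\xi-\eta|}\,\delta(\sigma - |\eta|)\,\delta(\tau - \sigma - |\xi - \eta|). \]
Integrating out $\sigma$ using the first delta leaves the single Dirac mass $\delta(\tau - |\eta| - |\xi-\eta|)$. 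Passing to polar coordinates $\eta = \rho\omega$ with $\rho \geq 0$, $\omega \in \mathbb{S}^2$, the remaining delta is supported where $|\xi - \rho\omega| = \tau - \rho$; squaring this relation yields $2\rho(\tau - \xi\cdot\omega) = \tau^2 - |\xi|^2$, so $\rho = \alpha/2$, and the triangle inequality $\rho + |\xi - \rho\omega| \geq |\xi|$ together with the strict monotonicity of $\rho \mapsto \rho + |\xi - \rho\omega|$ (which follows from Cauchy--Schwarz) shows this equation has a non-negative solution only when $\tau \geq |\xi|$, explaining the case distinction in the statement.

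It then remains to compute the Jacobian of $h(\rho) = \rho + |\xi - \rho\omega|$ at $\rho = \alpha/2$. A direct computation gives $h'(\rho) = 1 + (\rho - \xi\cdot\omega)/|\xi - \rho\omega|$, which at the critical point, using $|\xi - (\alpha/2)\omega| = \tau - \alpha/2$, simplifies to $h'(\alpha/2) = (\tau - \xi\cdot\omega)/(\tau - \alpha/2)$. Substituting this Jacobian together with the measure factor $\rho^2\,\d\rho\,\d\omega$ and the identity $|\xi - \rho\omega| = \tau - \rho$ collapses the integrand, after the desired cancellations, to
\[ \frac{(\alpha/2)\,\hat{\tilde{f}}((\alpha/2)\omega)\,\hat{\tilde{f}}(\xi - (\alpha/2)\omega)}{\tau - \xi\cdot\omega}. \]
Finally, the defining identity $\alpha(\tau - \xi\cdot\omega) = \tau^2 - |\xi|^2$ rewrites the prefactor $(\alpha/2)/(\tau - \xi\cdot\omega)$ as $\alpha^2/(2(\tau^2 - |\xi|^2))$, and combining this with the factor $4\pi^2$ coming from the two temporal deltas reproduces the claimed $2\pi^2/(\tau^2 - |\xi|^2)$ prefactor.

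The main obstacle is purely computational bookkeeping: tracking the $2\pi$ factors correctly, verifying the Jacobian formula at the critical point, and confirming the monotonicity of $h$ and the resulting condition $\tau \geq |\xi|$ for the delta to have support. No additional harmonic analysis is required beyond these steps.
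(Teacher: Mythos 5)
Your computation is correct, and it is essentially the argument the paper defers to \cite{KlainermanMachedon1993} (p.\ 1231) rather than reproducing: the convolution of the two cone-supported measures is evaluated in polar coordinates by delta calculus, with the radial root pinned at $\rho=\alpha/2$, the Jacobian $h'(\alpha/2)=(\tau-\xi\cdot\omega)/(\tau-\alpha/2)$, and the triangle inequality giving the support condition $\tau\geq|\xi|$. Your bookkeeping of constants is consistent with the normalization implicit in \eqref{inverse_convolution_flat_space} (i.e.\ $\mathcal{F}(uv)=\mathcal{F}(u)*\mathcal{F}(v)$ with no $(2\pi)^{-4}$), which is exactly what reproduces the stated prefactor $2\pi^2/(\tau^2-|\xi|^2)$.
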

	 
	 \noindent A key point is that the proof of \Cref{lemma:spacetime_convolution_lemma} relies on the inverse convolution formula \eqref{inverse_convolution_flat_space} on $\mathbb{R}^4$.

\subsection{Spacetime Fourier transform of $\widetilde{Q}_0^{\pm, \pm}$ and change of variables} Using the formula from \Cref{lemma:spacetime_convolution_lemma}, one then has for $\tau \geq |\xi|$
	\[ \mathcal{F}_{\tilde{t} \to \tau, \, \tilde{x} \to \xi}(\widetilde{Q}_0^{+,+}) = \pi^2 \int_{\mathbb{S}^2} \alpha^2 \hat{\tilde{f}} \left(\frac{\alpha}{2} \omega \right) \hat{\tilde{f}} \left( \xi - \frac{\alpha}{2} \omega \right) \d^2 \omega, \]
	so by Cauchy--Schwarz
	\[ |\mathcal{F}(\widetilde{Q}_0^{+,+})|^2 \leq \pi^2 |\mathbb{S}^2| \int_{\mathbb{S}^2} \alpha^4 \left|\hat{\tilde{f}} \left(\frac{\alpha}{2} \omega \right) \right|^2 \left| \hat{\tilde{f}} \left( \xi - \frac{\alpha}{2} \omega \right) \right|^2 \d^2 \omega. \]
	One can show, from the definition of $\alpha$, that $\frac{\d \alpha}{\d \tau} \geq 1$, so changing variables $\d \tau = \frac{\d \tau}{\d \alpha} \d \alpha$, Plancherel's theorem gives
	\begin{align*}
		\| \widetilde{Q}_0^{+,+} \|^2_{L^2(\mathbb{R}^4_{\tilde{t},\tilde{x}})} & \la \| \mathcal{F}(\widetilde{Q}_0^{+,+}) \|^2_{L^2(\mathbb{R}^4_{\tau, \xi})} \\
		& \la \int_{\mathbb{R}^3} \d \xi \int_0^\infty \d \tau \int_{\mathbb{S}^2} \d^2 \omega \, \alpha^4 	\left|\hat{\tilde{f}}\left(\frac{\alpha}{2} \omega \right) \right|^2 \left| \hat{\tilde{f}} \left( \xi - \frac{\alpha}{2} \omega \right) \right|^2 \\
		& \la \int_{\mathbb{R}^3} \d \xi \int_0^\infty \d \alpha \int_{\mathbb{S}^2} \d^2 \omega \, \alpha^4 \left|\hat{\tilde{f}}\left(\frac{\alpha}{2} \omega \right) \right|^2 \left| \hat{\tilde{f}} \left( \xi - \frac{\alpha}{2} \omega \right) \right|^2.
	\end{align*}
	Finally, reinstating the variable $\xi ' = \frac{\alpha}{2} \omega$, $\d \xi' = \alpha^2 \, \d \alpha \, \d^2 \omega$ and using Plancherel again, one arrives at
	\begin{align*}
		\| \widetilde{Q}_0^{+,+} \|^2_{L^2(\mathbb{R}^4)} & \la \int_{\mathbb{R}^3} \d \xi \int_{\mathbb{R}^3} \d \xi' \, |\xi'|^2 |\hat{\tilde{f}}(\xi - \xi')|^2 |\hat{\tilde{f}}(\xi')|^2 \\
		& \la \| \tilde{f} \|^2_{L^2(\mathbb{R}^3)} \| \boldsymbol{\nabla} \tilde{f} \|^2_{L^2(\mathbb{R}^3)}.	
	\end{align*}
	An important point in this step is that the reinstating of the variable $\xi'$ involves mixing the time and space Fourier variables since $\alpha = \alpha(\xi, \tau, \omega)$. The parts $\widetilde{Q}_0^{+,-}$ and $\widetilde{Q}_0^{-,-}$ may be estimated similarly, and one concludes that
	\[ \| \widetilde{Q}_0(\tilde{\phi}, \tilde{\phi})\|_{L^2(\mathbb{R} \times \mathbb{R}^3)} \la \| \tilde{f} \|_{L^2(\mathbb{R}^3)} \| \boldsymbol{\nabla} \tilde{f} \|_{L^2(\mathbb{R}^3)}. \]

\section{Setup on $\mathbb{R} \times \mathbb{S}^3$} \label{sec:setup_on_cylinder}

On $\mathbb{R} \times \mathbb{S}^3$, the spatial Fourier variables will be discrete since $\mathbb{S}^3$ is compact, however the temporal Fourier variable is in general continuous (cf. \Cref{sec:harmonicanalygroups} for our setup of Fourier transforms on Lie groups). To deal with this we use the time periodicity of \eqref{free_equations_with_time_data_2}.

\subsection{Time periodicity}

Solutions to \eqref{free_equations_with_time_data_2} are periodic in time; this may be seen at the level of twistor formulae (see \S9.4 of \cite{spinorsandspacetime2}), or as follows. Suppose a $C^\infty(\mathbb{R} \times \mathbb{S}^3)$ solution to \eqref{free_equations_with_time_data_2} is given. Taking the operator-valued Fourier transform in the $\mathrm{SU}(2)$ variables, one has
\[ \partial_t^2 \hat{\phi}(\pi_m) + m (m+2) \hat{\phi}(\pi_m) + \hat{\phi}(\pi_m) = 0, \]
where $m(m+2) +1 = (m+1)^2$ is a perfect square. This has the general solution
\begin{equation} \label{general_free_mode_cylinder} \hat{\phi}(\pi_m)(t) = A_m \sin ( (m+1) t) + B_m \cos ( (m+1)t ) \end{equation}
for constant $(m+1) \times (m+1)$ matrices $A_m$ and $B_m$, and in particular for every $ m \in \mathbb{N}_0$ the mode $\hat{\phi}(\pi_m)(t)$ is $2 \pi$-periodic in $t$. By the Peter--Weyl theorem, the full solution $\phi$ can be recovered from the Fourier series
\[ \phi(t,x) = \sum_{m \geq 0} (m+1) \operatorname{Tr}( \hat{\phi}(\pi_m)(t) \pi_m(x) ), \]
where $(t,x) \in \mathbb{R} \times \mathrm{SU}(2)$, and is also $2\pi$-periodic in $t$. It follows that solutions to \eqref{free_equations_with_time_data_2} can be identified with a subspace of the space of real-valued functions on $\mathbb{S}^1 \times \mathbb{S}^3 \simeq \mathrm{U}(1) \times \mathrm{SU}(2)$.

\subsection{Positive/negative frequency splitting} In the case of data $(\phi, \partial_t \phi)|_{t=0} = (0,f)$, the solution \eqref{general_free_mode_cylinder} becomes
\[ \hat{\phi}(\pi_m) = \frac{\sin( (m+1)t )}{(m+1)} \hat{f}(\pi_m). \]
In contrast to the continuous case, the denominator here never vanishes, so there is no need to consider a subspace of data $\hat{f}$ which vanishes around the origin. Straight away we then have
\[ \phi(t,x) = \sum_{m \geq 0} (m+1) \operatorname{Tr}( \hat{\phi}(\pi_m)(t) \pi_m(x) )  = \sum_{m \geq 0} \sin ((m+1)t) \operatorname{Tr}(\hat{f}(\pi_m) \pi_m(x) ). \]
Analogously to before, we define the positive and negative frequency parts of $\phi$ by
\begin{equation} \label{discreteposnegfrequencies} \phi^\pm(t,x) = \sum_{m \geq 0} e^{\pm i (m+1)t} \operatorname{Tr}(\hat{f}(\pi_m) \pi_m(x) ). \end{equation}
These solve $\Box \phi^\pm + \phi^\pm = 0$ and determine the full solution via $\phi = \frac{1}{2i}(\phi^+ - \phi^-)$, so we restrict our analysis to $\phi^\pm$.

\subsection{Spacetime Fourier transform of \texorpdfstring{$\phi^\pm$}{phi+}}

To write $\phi^\pm$ as a spacetime Fourier series, we first use the periodicity of $\phi^\pm(t,x)$ in $t$. These can then be written as standard Fourier series on $\mathbb{R}/2 \pi \mathbb{R} \simeq \mathbb{S}^1 \simeq \mathrm{U}(1)$,
\[ \phi^\pm(t,x) = \sum_{n \in \mathbb{Z}} e^{ int} \phi^{\pm}_n(x), \]
where, by comparing to \eqref{discreteposnegfrequencies},
\begin{equation} \label{S1Fourierphi} \phi^{\pm}_n(x) = \begin{cases} \operatorname{Tr}(\hat{f}(\pi_{\pm n-1}) \pi_{\pm n-1}(x)) \qquad \text{for } \pm n \geq 1, \\ \hspace{50pt} 0 \hspace{75pt} \text{otherwise}. \end{cases} \end{equation}
Hence the spacetime Fourier coefficient $\mathcal{F}(\phi^\pm)$ of $\phi^\pm$ is
\[ \mathcal{F}(\phi^\pm)(\pi_m)_n = \frac{1}{(m+1)} \hat{f}(\pi_m) \delta_{m, \pm n-1}, \]
where $m \geq 0$ is the $\mathrm{SU}(2)$ index, and $n$ is the $\mathrm{U}(1)$ index.

\subsection{Spacetime Fourier transform of \texorpdfstring{$Q_0^{\pm,\pm}$}{Q_0}}

Given a non-abelian Lie group $\mathrm{G}$ and two functions $f, \, g : \mathrm{G} \to \mathbb{R}$, their convolution is defined by
\[ (f * g)(x) = \int_{\mathrm{G}} f(y) g(x y^{-1}) \, \d \mu(y), \]
from which it follows that
\[ \widehat{(f*g)}(\pi) = \hat{f}(\pi) \hat{g}(\pi). \]
This is the \emph{forward} convolution theorem. However, since $\hat{f}(\pi)$ and $\hat{g}(\pi)$ are operators, there is in general no natural way of defining the convolution $\hat{f}(\pi) * \hat{g}(\pi)$, as there is no group structure on the unitary dual $\hat{\mathrm{G}}$. Instead, we proceed in a different way to obtain the spacetime Fourier transform of $Q_0^{\pm,\pm}$, by using the commutativity of the $\mathrm{U}(1)$ factor of $\mathrm{U}(1) \times \mathrm{SU}(2)$. The first step is to express $\mathcal{F}(Q_0^{\pm,\pm})$ in terms of $\mathcal{F}(\phi^\pm \psi^\pm)$.

\begin{lemma} The spacetime Fourier coefficients $\mathcal{F}(Q_0^{\pm,\pm})(\pi_m)_n$ of $Q_0^{\pm,\pm} = Q_0(\phi^\pm, \psi^\pm)$ are given by
\[ \mathcal{F}(Q_0^{\pm,\pm})(\pi_m)_n = \frac{1}{2} ( 1 + (m+1)^2 - n^2 ) \mathcal{F}(\phi^\pm \psi^\pm)(\pi_m)_n. \]
\end{lemma}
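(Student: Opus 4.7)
The strategy is to express $Q_0^{\pm,\pm}$ in a form in which the spacetime Fourier transform is easy to take, by converting it to an expression that is a scalar differential operator applied to the product $\phi^\pm \psi^\pm$. The central algebraic identity is the Leibniz-type relation
\[ \Box(\phi \psi) = (\Box \phi) \psi + \phi (\Box \psi) + 2 \nabla_a \phi \nabla^a \psi = (\Box \phi)\psi + \phi(\Box \psi) + 2 Q_0(\phi,\psi), \]
which holds on any pseudo-Riemannian manifold and in particular on $\mathbb{R} \times \mathbb{S}^3$. Rearranging gives $Q_0(\phi,\psi) = \tfrac{1}{2}[\Box(\phi \psi) - (\Box\phi)\psi - \phi(\Box \psi)]$. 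This is the cylinder analogue of the flat-space identity $\widetilde{Q}_0^{+,+} = \tfrac{1}{2} \Box_{\mathbb{R}^{1+3}}((\tilde{\phi}^+)^2)$ used in \Cref{sec:classical_estimate}.

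Since the frequency-projected pieces $\phi^\pm$ and $\psi^\pm$ each individually solve the free equation $\Box u + u = 0$, we may substitute $\Box \phi^\pm = -\phi^\pm$ and $\Box \psi^\pm = -\psi^\pm$ into the above identity. This yields the clean formula
\[ Q_0^{\pm,\pm} = \tfrac{1}{2}\bigl[\Box(\phi^\pm \psi^\pm) + \phi^\pm \psi^\pm + \phi^\pm \psi^\pm\bigr] = \tfrac{1}{2}(\Box + 2)(\phi^\pm \psi^\pm). \]
Taking the spacetime Fourier transform commutes with the constant coefficient differential operator $\Box + 2$, so it remains only to compute the symbol of $\Box + 2$ on the $(\pi_m, n)$ spacetime mode.

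The operator $\Box = \partial_t^2 - \slashed{\Delta}$ acts on a mode $e^{int} \operatorname{Tr}(M \pi_m(x))$ as multiplication by $-n^2 + m(m+2) = (m+1)^2 - 1 - n^2$, using that the Laplacian $\slashed{\Delta}$ on $\mathbb{S}^3$ has eigenvalue $-m(m+2)$ on the $\pi_m$ block (this is exactly the computation already recorded just before \eqref{general_free_mode_cylinder}). Consequently $\Box + 2$ acts as multiplication by $1 + (m+1)^2 - n^2$, and we conclude
\[ \mathcal{F}(Q_0^{\pm,\pm})(\pi_m)_n = \tfrac{1}{2}\bigl(1 + (m+1)^2 - n^2\bigr)\, \mathcal{F}(\phi^\pm \psi^\pm)(\pi_m)_n, \]
as claimed. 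There is no real obstacle here: the whole statement reduces to the Leibniz identity combined with the free equation and the eigenvalue calculation. The content of the lemma is that this reduction isolates the resonant factor $1 + (m+1)^2 - n^2$, which is the cylinder analogue of the flat-space factor $\tau^2 - |\xi|^2$ appearing in \eqref{inverse_convolution_flat_space}; this factor will vanish precisely on the mass shell $n = \pm(m+1)$ and will drive the cancellations needed in the subsequent null form estimates.
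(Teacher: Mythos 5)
Your proposal is correct and follows essentially the same route as the paper: the Leibniz identity plus the free equations give $2Q_0^{\pm,\pm} = (2+\Box)(\phi^\pm\psi^\pm)$, and the factor $1+(m+1)^2-n^2$ is the symbol of $2+\Box$ on the $(\pi_m,n)$ mode. The only presentational difference is that the paper justifies this last step by integrating by parts (formal self-adjointness of $\Box$ on $\mathbb{S}^1\times\mathbb{S}^3$) to move $\Box$ onto $\e^{-int}\pi_m(x^{-1})$, whereas you invoke the diagonal action of $\Box$ on the Peter--Weyl modes directly --- the same computation in substance.
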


\begin{proof} This follows from the identity
\[ 2 Q_0(\phi^\pm, \psi^\pm) = (2 + \Box )\phi^\pm \psi^\pm  \]
and the fact that $\Box$ is formally self-adjoint on $\mathbb{S}^1 \times \mathbb{S}^3$. Taking the spacetime Fourier transform of this, we obtain
\begin{align*}
	\mathcal{F}(Q_0^{\pm,\pm})(\pi_m)_n & = \mathcal{F}(\phi^\pm \psi^\pm)(\pi_m)_n + \frac{1}{4\pi} \int_{-\pi}^{\pi} \int_{\mathrm{SU}(2)} \e^{-int} \pi_m(x^{-1}) (\Box(\phi^\pm \psi^\pm))(t,x) \, \d \mu \, \d t \\
	& = \mathcal{F}(\phi^\pm \psi^\pm)(\pi_m)_n + \frac{1}{4\pi} \int_{-\pi}^{\pi} \int_{\mathrm{SU}(2)}(\phi^\pm \psi^\pm)(t,x) \Box( \e^{-int} \pi_m(x^{-1}) ) \, \d \mu \, \d t \\
	& = \frac{1}{2} ( 1+ (m+1)^2 - n^2) \mathcal{F}(\phi^\pm \psi^\pm)(\pi_m)_n.
\end{align*}
\end{proof}

While an inverse convolution theorem on the whole of $\mathrm{U}(1) \times \mathrm{SU}(2)$ is not available, it is still available on the abelian $\mathrm{U}(1)$ factor. The $\mathrm{U}(1)$-Fourier coefficients of $(\phi^\pm \psi^\pm)$ are
\[ (\phi^\pm \psi^\pm)_n = \sum_{l \in \mathbb{Z}} \phi^\pm_l \psi^\pm_{n-l}, \]
so that, using \eqref{S1Fourierphi},
\begin{equation} \label{S1Fourierproduct++} (\phi^+ \psi^+)_n(x) = \sum_{l  = 1}^{n-1} \operatorname{Tr}(\hat{f}(\pi_{l-1}) \pi_{l-1}(x) ) \operatorname{Tr}(\hat{g}(\pi_{n-l-1}) \pi_{n-l-1}(x) )
\end{equation}
for the $(+,+)$ product and
\begin{equation} \label{S1Fourierproduct+-} 
	(\phi^+ \psi^-)_n(x) = \sum_{l = n+1}^\infty \operatorname{Tr}(\hat{f}(\pi_{l-1}) \pi_{l-1}(x)) \operatorname{Tr}(\hat{g}(\pi_{l-n-1}) \pi_{l-n-1} (x)) 
\end{equation}
for the $(+,-)$ product. The expressions for the $(-,+)$ and $(-,-)$ products are similar. Integrating \eqref{S1Fourierproduct++}, \eqref{S1Fourierproduct+-} against $\pi_m(x^{-1}) \, \d \mu(x)$, we have
\[ \mathcal{F}(\phi^+ \psi^+)(\pi_m)_n = \sum_{l=1}^{n-1} \varpi_l^{+,+}(\pi_m)_n ~~\quad \text{and} ~~\quad \mathcal{F}(\phi^+ \psi^-)(\pi_m)_n = \sum_{l = n+1}^\infty \varpi^{+,-}_l(\pi_m)_n, \]
where
\begin{align*} \varpi^{+,+}_l (\pi_m)_n & \overset{\text{def}}{=} \int_{\mathrm{SU}(2)} \operatorname{Tr}(\hat{f}(\pi_{l-1}) \pi_{l-1}(x) ) \operatorname{Tr}(\hat{g}(\pi_{n-l-1}) \pi_{n-l-1}(x) ) \pi_m (x^{-1} ) \, \d \mu (x) \\
& = \hat{f}(\pi_{l-1})_{ji} \hat{g}(\pi_{n-l-1})_{sr} \int_{\mathrm{SU}(2)} (\pi_{l-1})_{ij} (\pi_{n-l-1})_{rs} \pi_m^\dagger \, \d \mu.
\end{align*}
and
\[ \varpi^{+,-}_l(\pi_m)_n = \hat{f}(\pi_{l-1})_{ji} \hat{g}(\pi_{l-n-1})_{sr} \int_{\mathrm{SU}(2)} (\pi_{l-1})_{ij} (\pi_{l-n-1})_{rs} \pi_m^\dagger \, \d \mu. \]
Computing $\varpi_l^{+,\pm}(\pi_m)_n$ encapsulates the key novelty of this calculation as compared to the $\mathbb{R}^{1+3}$ case: as $\mathrm{SU}(2)$ is non-abelian, the tensor product $\pi_l \otimes \pi_{n-l}$ is not isomorphic to a single irreducible representation $\pi_n$. Instead, for $\mathrm{SU}(2)$ the product $\pi_a \otimes \pi_b$ is isomorphic to the Clebsch--Gordan expansion
\begin{equation} \label{abstractCG} \pi_a \otimes \pi_b = \bigoplus_{k=0}^{\min\{a,b \}} \pi_{|a-b| + 2k }
\end{equation}
(see, for example, \S7 of \cite{Faraut2008}). For calculation we will need the expansion \eqref{abstractCG} with an explicit choice of the representations $\pi_m$; we use Wigner's D-matrices \cite{Wigner1959}.

\subsection{Wigner's D-matrices and Clebsch--Gordan expansions}

Wigner's D-matrices are explicit unitary matrix representations of $\pi_m$, $m \geq 0$. The rows and columns of $(\pi_m)_{ij}$ are labelled by the integers and half-integers
\[ i, j = - \frac{m}{2}, - \frac{m}{2} + 1, \dots , \frac{m}{2} - 1, \frac{m}{2}, \]
and a general expression for all $(\pi_m)_{ij}(\alpha, \beta, \gamma)$, where $(\alpha, \beta, \gamma)$ are the Euler angles parametrizing $\mathrm{SU}(2)$, can be found on p. 167 of \cite{Wigner1959}. As an example, the first three representations are given by
\begingroup
\renewcommand{\arraystretch}{1.8}
\[ \pi_0 = 1, \qquad  \pi_1 = \pm \left( \begin{array}{lr} \e^{-\frac{1}{2} i \alpha} \left( \cos \frac{1}{2} \beta \right) \e^{-\frac{1}{2} i \gamma} & - \e^{-\frac{1}{2} i \alpha} \left( \sin \frac{1}{2} \beta \right) \e^{\frac{1}{2} i \gamma} \\ \e^{\frac{1}{2} i \alpha} \left( \sin \frac{1}{2} \beta \right) \e^{- \frac{1}{2} i \gamma} & \e^{\frac{1}{2} i \alpha} \left( \cos \frac{1}{2} \beta \right) \e^{\frac{1}{2} i \gamma}  \end{array} \right), \]
and
\[ \pi_2 = \left( \begin{array}{lcr} \e^{-i\alpha} \frac{1}{2}(1+\cos\beta) \e^{- i \gamma} & - \e^{-i\alpha} \frac{1}{\sqrt{2}} \sin \beta & \e^{-i \alpha} \frac{1}{2}(1- \cos \beta) \e^{i \gamma} \\ \frac{1}{\sqrt{2}} (\sin \beta ) \e^{-i \gamma} & \cos \beta & -\frac{1}{\sqrt{2}} ( \sin \beta) \e^{i \gamma} \\ \e^{i \alpha} \frac{1}{2} ( 1 - \cos \beta) \e^{-i\gamma} & \e^{i\alpha} \frac{1}{\sqrt{2}} \sin \beta & \e^{i\alpha} \frac{1}{2} ( 1+ \cos \beta) \e^{i\gamma} \end{array} \right). \]
\endgroup
Here $\alpha, \beta, \gamma \in [-\pi, \pi]$, and the rows and columns of $\pi_1$ are labelled by $-\frac{1}{2}, \frac{1}{2}$, and of $\pi_2$ by $-1, 0, 1$. In terms of Wigner's D-matrices, an indexed version of the Clebsch--Gordan expansion \eqref{abstractCG} is given by (see eq. (17.16b) in \cite{Wigner1959}, or, for example, eq. (4.25) in \cite{Rose1957})
\begin{align} \begin{split} \label{indexCG} (\pi_a)_{ij}(\pi_b)_{rs} &= \sum_{k = 0}^{\min\{a,b\}} \left\langle\left. \frac{a}{2} \, i \, \frac{b}{2} \, r \, \right| \frac{|a-b| + 2k}{2} \, (i+r) \right\rangle \\
& \hspace{36pt} \times \left\langle\left. \frac{a}{2} \, j \, \frac{b}{2} \, s \, \right| \frac{|a-b| + 2k}{2} \, (j+s) \right\rangle (\pi_{|a-b| + 2k})_{(i+r)(j+s)}, 
\end{split}	
\end{align}
where $\langle j_1 \, m_1 \, j_2 \, m_2 | j_3 \, m_3 \rangle $ are the standard Clebsch--Gordan coefficients of $\mathrm{SU}(2)$. The Clebsch--Gordan coefficients are probability amplitudes, so in particular they are uniformly bounded by 1 in all six variables $j_1, j_2, j_3, m_1, m_2, m_3$. They do not vanish only if $-j_1 \leq m_1 \leq j_1$, $-j_2 \leq m_2 \leq j_2$, $-j_3 \leq m_3 \leq j_3$, and $m_1 + m_2 = m_3$, and further obey the orthogonality relations (cf. eqs. (3.7), (3.9) in \cite{Rose1957})
\begin{align}
	\label{orth1} & \sum_{j_3} \sum_{m_3} \langle j_1 \, m_1 \, j_2 \, m_2 | j_3 \, m_3 \rangle \langle j_1 \, m_1' \, j_2 \, m_2' | j_3 \, m_3 \rangle = \delta_{m_1, m_1'} \delta_{m_2, m_2'}, \\
	\label{orth2} & \sum_{m_1} \sum_{m_2} \langle j_1 \, m_1 \, j_2 \, m_2 | j_3 \, m_3 \rangle \langle j_1 \, m_1 \, j_2 \, m_2 | j_3' \, m_3' \rangle = \delta_{j_3, j_3'} \delta_{m_3, m_3'},
\end{align}
where in \eqref{orth1} the sums run over $|j_1 - j_2| \leq j_3 \leq j_1 + j_2$ and $-j_3 \leq m_3 \leq j_3$, and in \eqref{orth2} the sums run over $-j_{1,2} \leq m_{1,2} \leq j_{1,2}$. Using \eqref{indexCG} and the Schur orthogonality relations, we therefore have
\begin{align*} & \int_{\mathrm{SU}(2)} (\pi_{l-1})_{ij} ( \pi_{n-l-1})_{rs} (\pi^\dagger_m)_{pq} \, \d \mu \\
 & = \sum_{k=0}^{\min\{l-1, n-l-1\}} \left\langle \left.\frac{l-1}{2} \, i \, \frac{n-l-1}{2} \, r \right| \frac{|n-2l| + 2k}{2} \, (i+r) \right\rangle \\
 & \hspace{40pt} \times \left\langle \left.\frac{l-1}{2} \, j \, \frac{n-l-1}{2} \, s \right| \frac{|n-2l| + 2k}{2} \, (j+s) \right\rangle \int_{\mathrm{SU}(2)} ( \pi_{|n-2l| + 2k })_{(i+r)(j+s)} (\pi^\dagger_m)_{pq} \, \d \mu \\
 & = \sum_{k=0}^{\min\{l-1, n-l-1\}} \left\langle \left.\frac{l-1}{2} \, i \, \frac{n-l-1}{2} \, r \right| \frac{|n-2l| + 2k}{2} \, (i+r) \right\rangle \\
 & \hspace{40pt} \times \left\langle \left.\frac{l-1}{2} \, j \, \frac{n-l-1}{2} \, s \right| \frac{|n-2l| + 2k}{2} \, (j+s) \right\rangle \frac{1}{(m+1)} \delta_{i+r, p} \delta_{j+s, q} \delta_{|n-2l| + 2k, m}
\end{align*}
provided
\[ |n-2l| \leq m \leq |n-2l| + 2 \min\{ l - 1, n-l-1 \} \iff |n-2l| \leq m \leq n - 2 \]
so that $\pi_m$ is in the range of the Clebsch--Gordan expansion \eqref{indexCG}, and
\[ \frac{m - |n-2l|}{2} \in \mathbb{Z} \iff m \equiv n \mod 2. \]
We therefore obtain
\begin{align} \begin{split} \label{SU2convolution} (\varpi^{+,+}_l(\pi_m)_n)_{pq} &= \frac{1}{(m+1)} \hat{f}(\pi_{l-1})_{ji} \hat{g}(\pi_{n-l-1})_{(q-j)(p-i)} \\
& \times \left\langle\left. \frac{l-1}{2} \, i \, \frac{n-l-1}{2} \,  (p-i) \right| \frac{m}{2} \, p \right\rangle \left\langle\left. \frac{l-1}{2} \, j \, \frac{n-l-1}{2} \, (q-j) \right| \frac{m}{2} \, q \right\rangle \chi_1 \chi_2,
\end{split}
\end{align}
where $\chi_1 = \chi(|n-2l| \leq m \leq n -2 )$ and $\chi_2 = \chi(m \equiv n \mod 2)$, where $\chi$ is the indicator function. Similarly, we obtain
\begin{align}
	\begin{split}
	\label{varpi_plus_minus}
		(\varpi^{+,-}_l(\pi_m)_n)_{pq} & = \frac{1}{(m+1)} \hat{f}(\pi_{l-1})_{ji} \hat{g}(\pi_{l-n-1})_{(q-j)(p-i)} \\
		& \times \left\langle \left. \frac{l-1}{2} \, i \, \frac{l-n-1}{2} \, (p-i) \, \right| \, \frac{m}{2} \, p \right\rangle \left\langle \left. \frac{l-1}{2} \, j \, \frac{l-n-1}{2} \, (q-j) \, \right| \, \frac{m}{2} \, q \right\rangle \chi_2 \chi_3,
	\end{split}
\end{align}
where $\chi_3 = \chi(|n| \leq m \leq -n +2(l-1))$, and where in both cases for a well-defined representation $\pi_{l-1}$ we require $l \geq 1$. The presence of $\chi_1$ and $\chi_3$ encodes the multi-dimensional nature of the irreducible representations of $\mathrm{SU}(2)$ and the fact that the product $\pi_l \otimes \pi_{n-l}$ has a non-trivial Clebsch--Gordan expansion. For the convolutions $\varpi^{+,\pm}_l(\pi_m)_n$ we have the following discrete Fourier-side version of Young's inequality.

\begin{lemma} \label{lemma:discrete_convolution_bound_CG_orthogonality} For the matrices $\varpi^{+,\pm}_l(\pi_m)_n$ given by \eqref{SU2convolution} and \eqref{varpi_plus_minus} we have the bounds
\[ \sum_{m \geq 0} \vertiii{(m+1) \varpi^{+,\pm}_l(\pi_m)_n}^2 \leq \vertiii{\hat{f}(\pi_{l-1})}^2 \vertiii{\hat{g}(\pi_{\pm(n-l)-1})}^2. \]
\end{lemma}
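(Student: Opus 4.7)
The plan is to recognise the identities \eqref{SU2convolution} and \eqref{varpi_plus_minus} as expressing $(m+1)\varpi_l^{+,\pm}(\pi_m)_n$ as a single diagonal block of the conjugation of a tensor-product matrix by the Clebsch--Gordan unitary; once this identification is made, the estimate follows from the Pythagorean theorem applied to that block decomposition.

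Set $a = l-1$ and $b = \pm(n-l)-1$, and write $V_j = \mathbb{C}^{j+1}$ for the carrier space of $\pi_j$. The Clebsch--Gordan decomposition provides a unitary isomorphism $T : V_a \otimes V_b \to \bigoplus_{m} V_m$, where the direct sum runs over those $m \in \mathbb{N}_0$ with $|a-b| \leq m \leq a+b$ and $m \equiv a+b \pmod{2}$. By \eqref{indexCG}, the entries of $T$ are exactly
\[
T_{(m,p),(i,r)} = \left\langle \tfrac{a}{2}\, i\, \tfrac{b}{2}\, r \,\middle|\, \tfrac{m}{2}\, p \right\rangle,
\]
vanishing unless $p = i + r$. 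The orthogonality relations \eqref{orth1}--\eqref{orth2} are precisely the statement that $T$ is a (square) unitary; note that the Clebsch--Gordan support conditions automatically encode the indicators $\chi_1,\chi_2,\chi_3$ appearing in \eqref{SU2convolution}, \eqref{varpi_plus_minus}.

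Now set $F = \hat f(\pi_a)$, $G = \hat g(\pi_b)$ and define $N \in \mathrm{End}(V_a \otimes V_b)$ by $N_{(i,r),(j,s)} = F_{ji}\, G_{sr}$, which is nothing but $(F \otimes G)^T$; in particular the Frobenius norm factorises as $\vertiii{N}^2 = \vertiii{F}^2\, \vertiii{G}^2$. A direct computation using the reality of Clebsch--Gordan coefficients together with the support constraints $r = p-i$, $s = q-j$ yields
\[
(TNT^*)_{(m,p),(m',q)} = \sum_{i,j} F_{ji}\, G_{(q-j)(p-i)} \left\langle \tfrac{a}{2}\, i\, \tfrac{b}{2}\, (p-i) \,\middle|\, \tfrac{m}{2}\, p \right\rangle \left\langle \tfrac{a}{2}\, j\, \tfrac{b}{2}\, (q-j) \,\middle|\, \tfrac{m'}{2}\, q \right\rangle,
\]
and comparing with \eqref{SU2convolution}--\eqref{varpi_plus_minus} gives $\bigl((m+1)\varpi_l^{+,\pm}(\pi_m)_n\bigr)_{pq} = (TNT^*)_{(m,p),(m,q)}$, i.e.\ the $(m,m)$-diagonal block of $TNT^*$.

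The lemma is then immediate by discarding the off-diagonal blocks: since $T$ is unitary,
\[
\sum_{m \geq 0} \vertiii{(m+1)\varpi_l^{+,\pm}(\pi_m)_n}^2 = \sum_{m} \sum_{p,q} \bigl|(TNT^*)_{(m,p),(m,q)}\bigr|^2 \leq \vertiii{TNT^*}^2 = \vertiii{N}^2 = \vertiii{\hat f(\pi_a)}^2\, \vertiii{\hat g(\pi_b)}^2.
\]
I do not expect any serious obstacle: the only conceptual step is recognising the Clebsch--Gordan coefficients as entries of a unitary change of basis between $V_a \otimes V_b$ and $\bigoplus_m V_m$. The main bookkeeping items are the index swap in $N = (F \otimes G)^T$ (which explains why $F_{ji}$ rather than $F_{ij}$ appears in \eqref{SU2convolution}) and the observation that the $(+,-)$ case is formally the $(+,+)$ case with $n-l$ replaced by $l-n$, so that both variants are covered simultaneously by the same argument.
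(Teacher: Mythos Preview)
Your proof is correct and is, at bottom, the same fact as the paper's---unitarity of the Clebsch--Gordan change of basis---but packaged much more transparently. The paper argues by explicit index gymnastics: it writes out $\sum_m \vertiii{(m+1)\varpi_l^{+,+}(\pi_m)_n}^2$ as a sum of products of four Clebsch--Gordan coefficients, performs a change of summation variables, pulls $\hat g$ out via the crude bound $|\hat g(\pi_b)_{sr}|^2 \leq \vertiii{\hat g(\pi_b)}^2$, inserts an auxiliary sum over a second representation index $m'$, and finally collapses everything by applying the orthogonality relation \eqref{orth1'} twice. Your argument replaces all of this with the single observation that $(m+1)\varpi_l^{+,\pm}(\pi_m)_n$ is the $(m,m)$ diagonal block of $T(\hat f(\pi_a)\otimes\hat g(\pi_b))^T T^*$ and that discarding off-diagonal blocks can only decrease the Frobenius norm; the paper's auxiliary $m'$-sum and double orthogonality are exactly the recovery of those off-diagonal blocks in disguise. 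Your route is shorter, avoids the somewhat opaque change of summation variables in the paper, and makes it manifest why the constant is exactly $1$.
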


\begin{proof} We write down the proof for $\varpi^{+,+}_l(\pi_m)_n$, the calculation for $\varpi^{+,-}_l(\pi_m)_n$ being the same. For more efficient notation, we write here
\[ \sum_{m \geq 0} \chi_1 \chi_2 = \sum_m \]
 and 
\[ \langle j_1 \, m_1 \, j_2 \, m_2 \, | \, j_3 \, m_3 \rangle = \mathcal{C}^{(j_1 j_2)}_{j_3 m_1 m_2}, \]
with the understanding that the ranges of the sums below are restricted to the ranges in which the Clebsch--Gordan coefficients involved are non-zero. Then the orthogonality relations \eqref{orth1} and \eqref{orth2} read
\begin{align}
	\label{orth1'} & \sum_{j_3} \sum_{m_3} \mathcal{C}^{(j_1 j_2)}_{j_3 m_1 m_2} \mathcal{C}^{(j_1 j_2)}_{j_3 m'_1 m_2'} = \delta_{m_1, m_1'} \delta_{m_2, m_2'}, \\
	\label{orth2'} & \sum_{m_1} \sum_{m_2} \mathcal{C}^{(j_1 j_2)}_{j_3 m_1 m_2}  \mathcal{C}^{(j_1 j_2)}_{j'_3 m_1 m_2} = \delta_{j_3, j_3'} .
\end{align}
Explicitly expanding the Frobenius norm squared of $(m+1)\varpi^{+,+}_l(\pi_m)_n$, we have 
\begin{align*}
	\sum_{m \geq 0} & \vertiii{(m+1) \varpi^{+,+}_l(\pi_m)_n}^2 = \sum_m \sum_{p, q} \sum_{i,j} \sum_{i',j'} \hat{f}(\pi_{l-1})_{ji} \overline{\hat{f}(\pi_{l-1})}_{j'i'}  \\
	& \times \hat{g}(\pi_{n-l-1})_{(q-j)(p-i)} \overline{\hat{g}(\pi_{n-l-1})}_{(q-j')(p-i')} \mathcal{C}^{\left( \frac{l-1}{2} \frac{n-l-1}{2} \right)}_{\frac{m}{2} i (p-i)} \mathcal{C}^{\left( \frac{l-1}{2} \frac{n-l-1}{2} \right)}_{\frac{m}{2} i' (p-i')} \mathcal{C}^{\left( \frac{l-1}{2} \frac{n-l-1}{2} \right)}_{\frac{m}{2} j (q-j)} \mathcal{C}^{\left( \frac{l-1}{2} \frac{n-l-1}{2} \right)}_{\frac{m}{2} j' (q-j')} \\
	& = \sum_m \sum_{p,q} \left| \sum_{i,j} \mathcal{C}^{\left( \frac{l-1}{2} \frac{n-l-1}{2} \right)}_{\frac{m}{2} i (p-i)} \mathcal{C}^{\left( \frac{l-1}{2} \frac{n-l-1}{2} \right)}_{\frac{m}{2} j (q-j)} \hat{f}(\pi_{l-1})_{ji} \hat{g}(\pi_{n-l-1})_{(q-j)(p-i)} \right|^2 \\
	& = \sum_m \sum_{r,s} \left| \sum_{i,j} \mathcal{C}^{\left( \frac{l-1}{2} \frac{n-l-1}{2} \right)}_{\frac{m}{2} i r} \mathcal{C}^{\left( \frac{l-1}{2} \frac{n-l-1}{2} \right)}_{\frac{m}{2} j s} \hat{f}(\pi_{l-1})_{ji} \hat{g}(\pi_{n-l-1})_{sr} \right|^2 \\
	& \leq \vertiii{\hat{g}(\pi_{n-l-1})}^2 \sum_m \sum_{r,s} \left| \sum_{i,j} \mathcal{C}^{\left( \frac{l-1}{2} \frac{n-l-1}{2} \right)}_{\frac{m}{2} i r} \mathcal{C}^{\left( \frac{l-1}{2} \frac{n-l-1}{2} \right)}_{\frac{m}{2} j s} \hat{f}(\pi_{l-1})_{ji} \right|^2 \\
	& \leq \vertiii{\hat{g}(\pi_{n-l-1})}^2 \sum_m \sum_{m'} \sum_{r,s} \left| \sum_{i,j} \mathcal{C}^{\left( \frac{l-1}{2} \frac{n-l-1}{2} \right)}_{\frac{m}{2} i r} \mathcal{C}^{\left( \frac{l-1}{2} \frac{n-l-1}{2} \right)}_{\frac{m'}{2} j s} \hat{f}(\pi_{l-1})_{ji} \right|^2 \\
	& = \vertiii{\hat{g}(\pi_{n-l-1})}^2 \sum_m \sum_r \sum_{i,i'} \bigg[ \mathcal{C}^{\left( \frac{l-1}{2} \frac{n-l-1}{2} \right)}_{\frac{m}{2} i r} \mathcal{C}^{\left( \frac{l-1}{2} \frac{n-l-1}{2} \right)}_{\frac{m}{2} i' r} \\
	& \hspace{68pt} \times \sum_{m'} \sum_s \sum_{j,j'} \mathcal{C}^{\left( \frac{l-1}{2} \frac{n-l-1}{2} \right)}_{\frac{m'}{2} j s}  \mathcal{C}^{\left( \frac{l-1}{2} \frac{n-l-1}{2} \right)}_{\frac{m'}{2} j' s} \hat{f}(\pi_{l-1})_{ji} \overline{\hat{f}(\pi_{l-1})}_{j'i'} \bigg] \\
	& = \vertiii{\hat{f}(\pi_{l-1})}^2 \vertiii{\hat{g}(\pi_{n-l-1})}^2,
\end{align*}
where in the last line we used \eqref{orth1'} twice.
\end{proof}

\section{Proof of the Basic Estimate} \label{sec:basic_estimate_on_cylinder}

We now turn to the proof of the basic estimate of \Cref{thm:basic_estimate}, i.e.
\[ \| Q_0(\phi, \psi) \|^2_{L^2([-\pi,\pi] \times \mathbb{S}^3)} \la \| f \|^2_{H^{1}(\mathbb{S}^3)} \| g \|^2_{H^\epsilon(\mathbb{S}^3)}. \]
Noting that $-4Q_0(\phi, \psi) = Q_0^{+,+} - Q_0^{+,-} - Q_0^{-,+} + Q_0^{-,-}$, it is enough to estimate each of the four parts $Q_0^{\pm,\pm}$. Clearly the estimates for $Q_0^{+,+}$ and $Q_0^{-,-}$ are entirely analogous, as are those for $Q_0^{+,-}$ and $Q_0^{-,+}$. It is therefore enough to show that $Q_0^{+,+}$ and $Q_0^{+,-}$ are suitably controlled. We record the following auxiliary lemma.

\begin{lemma} \label{lem:behaviour_lambda} For any fixed $n \in \mathbb{Z}$ the function $\lambda: \mathbb{R}_{\geq 0} \to \mathbb{R}$ defined by
\[ \lambda(m; n) \overset{\text{def}}{=} \frac{(1+(m+1)^2 - n^2)^2}{(m+1)} \]
is decreasing for $m \in [0, \sqrt{n^2-1}-1)$, has a global minimum $\lambda(m_*) = 0$ at $m_* = \sqrt{n^2-1} -1$, and is increasing for $m \in  (\sqrt{n^2-1}-1, \infty)$.
\end{lemma}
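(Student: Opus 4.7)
The plan is to set $u = m + 1 \in [1, \infty)$ and $a = n^2 - 1$, reducing the lemma to an elementary calculus exercise for
\[ \lambda(u) = \frac{(u^2 - a)^2}{u}. \]
Differentiating by the quotient rule,
\[ \lambda'(u) = \frac{2(u^2-a)(2u)\cdot u - (u^2-a)^2}{u^2} = \frac{(u^2-a)(3u^2 + a)}{u^2}, \]
and the key observation is that $3u^2 + a > 0$ for all $u \geq 1$, using that $a = n^2 - 1 \geq 0$ in the regime $|n| \geq 1$ in which the statement of the lemma is non-vacuous. Hence $\lambda'(u)$ has the same sign as $u^2 - a$: negative on $(0, \sqrt{a})$, zero at $u_* = \sqrt{a}$, and positive on $(\sqrt{a}, \infty)$. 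Translating back via $m = u - 1$ gives $m_* = \sqrt{n^2-1} - 1$ as the unique interior critical point, and at this $m_*$ the numerator $(u^2 - a)^2$ vanishes identically, so $\lambda(m_*) = 0$. Since $\lambda \geq 0$ pointwise, this is automatically the global minimum on $[0, \infty)$, and the claimed monotonicity on either side of $m_*$ follows from the sign analysis above.

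There is essentially no technical obstacle here; the only subtlety worth noting concerns the implicit range of $n$. For $|n| \geq 2$ the critical point $m_* = \sqrt{n^2 - 1} - 1$ lies in $(0, \infty)$ and the above analysis produces exactly the three intervals of monotonicity claimed. For $|n| \leq 1$ the "decreasing" interval $[0, \sqrt{n^2-1} - 1)$ is empty (or not real for $n=0$) and $\lambda$ is simply strictly increasing on $[0, \infty)$, with no zero, which is still consistent with the formula. In the applications of this lemma to the estimates of Section~\ref{sec:basic_estimate_on_cylinder} the summation indices are restricted to $n \geq 2$ by the Clebsch--Gordan selection rules encoded in $\chi_1$ and $\chi_3$, so only the non-degenerate regime is ever used.
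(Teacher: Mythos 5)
Your proof is correct and is essentially the paper's argument: you compute the same derivative (your $\frac{(u^2-a)(3u^2+a)}{u^2}$ is exactly the paper's $\frac{3(m+1)^2+(n^2-1)}{(m+1)^2}\,(1+(m+1)^2-n^2)$ after the substitution $u=m+1$, $a=n^2-1$), note the second factor is positive, and read off the sign from $1+(m+1)^2-n^2$, with the global minimum following from $\lambda \geq 0$. Your remark on the degenerate range $|n|\leq 1$ is a sensible extra observation but does not change the substance.
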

\begin{proof}
This follows directly from computing the derivative in $m$,
\[ \lambda'(m; n) = \frac{(3(m+1)^2 + (n^2 - 1))}{(m+1)^{2}} ( 1 + (m+1)^2 - n^2) , \]
where the first factor in the above is manifestly positive for all $n \in \mathbb{Z}$ and $m \geq 0$. 	
\end{proof}

\subsection{Estimate for $Q_0^{+,+}$}

By Plancherel's theorem, we compute, noting that we must have $n \geq 2$ due to $\chi_1 = \chi(|n-2l| \leq m \leq n -2)$,

\begin{align}
\begin{split}
\label{Q0++estimate}
 \| Q_0^{+,+} \|^2_{L^2(\mathbb{S}^1 \times \mathbb{S}^3)} & \simeq \sum_{n \geq 2} \sum_{m \geq 0} (m+1) \vertiii{\mathcal{F}(Q_0^{+,+})(\pi_m)_n}^2 \\\
& \simeq \sum_{n \geq 2} \sum_{m \geq 0} (1+(m+1)^2 - n^2 )^2 ( m+ 1) \vertiii{\mathcal{F}(\phi^+ \psi^+)(\pi_m)_n}^2 \\
& \simeq \sum_{n \geq 2} \sum_{m \geq 0} ( 1 + (m+1)^2 - n^2 )^2 ( m + 1)^{-1} \vertiii{\sum_{l=1}^{n-1} (m+1) \varpi^{+,+}_l(\pi_m)_n}^2
\end{split}
\end{align}
We now perform the splitting, for any fixed $\epsilon > 0$,
\begin{align*} \vertiii{\sum_{l=1}^{n-1} (m+1) \varpi^{+,+}_l(\pi_m)_n }^2 &\leq \sum_{l=1}^{n-1} \frac{(1+|n-2l|) l}{(n-l)^{1-2\epsilon}} \vertiii{(m+1) \varpi^{+,+}_l(\pi_m)_n}^2 \\
& \times \sum_{k=1}^{n-1} \frac{(n-k)^{1-2\epsilon}}{(1+|n-2k|) k},
\end{align*}
where the last factor in the above converges as $n \to \infty$ for $\epsilon > 0$ by \Cref{lem:aux_2} with $\beta_0 = 0$, $\alpha = 1$, $\beta = -1 + 2\epsilon$. Now note that in estimate \eqref{Q0++estimate} the function $\lambda(m) = (1+ (m+1)^2 - n^2)^2 (m+1)^{-1}$ is decreasing in $m$ in the range $|n-2l| \leq m \leq n-2$ (cf. \Cref{lem:behaviour_lambda}), so its maximum in this range is attained at $m=|n-2l|$. Using \Cref{lemma:discrete_convolution_bound_CG_orthogonality}, we have
\begin{align}
	\label{Q0++estimate_continued}
	\begin{split}
	\| Q_0^{+,+} \|^2_{L^2(\mathbb{S}^1 \times \mathbb{S}^3)} & \la_\epsilon \sum_{n \geq 2} \sum_{m \geq 0} \sum_{l=1}^{n-1} (1+(m+1)^2 - n^2)^2 (m+1)^{-1} \frac{(1+|n-2l|) l}{(n-l)^{1-2\epsilon}} \\
	& \times \vertiii{(m+1) \varpi^{+,+}_l(\pi_m)_n}^2 \\
	&\la_\epsilon \sum_{n \geq 2} \sum_{l=1}^{n-1} \frac{(1+(|n-2l|+1)^2 - n^2)^2}{(n-l)^{1-2\epsilon}} l \vertiii{\hat{f}(\pi_{l-1})}^2 \vertiii{\hat{g}(\pi_{n-l-1})}^2 \\
n-l \overset{\text{def}}{=} k \longrightarrow & \la_\epsilon \sum_{l \geq 1} \sum_{k \geq 1} \frac{(1+(|k-l|+1)^2 - (k+l)^2)^2}{k^{1-2\epsilon}} l \vertiii{\hat{f}(\pi_{l-1})}^2 \vertiii{\hat{g}(\pi_{k-1})}^2 \\
	& \la_\epsilon \sum_{l \geq 1} \sum_{k \geq 1} \frac{k^2 l^2 }{k^{1-2\epsilon}} l \vertiii{\hat{f}(\pi_{l-1})}^2 \vertiii{\hat{g}(\pi_{k-1})}^2 \\
	& \la_\epsilon \| f \|^2_{H^{1}(\mathbb{S}^3)} \| g \|^2_{H^\epsilon(\mathbb{S}^3)}.
	\end{split}
\end{align}

\subsection{Estimate for $Q_0^{+,-}$}

Here we must split the estimate into the two cases $n \geq 0$ and $n < 0$. We have
\begin{align*}
	\| Q_0^{+,-} \|^2_{L^2(\mathbb{S}^1 \times \mathbb{S}^3)} & \simeq \sum_{n \in \mathbb{Z}} \sum_{m \geq 0} (m+1) \vertiii{\mathcal{F}(Q_0^{+,-})(\pi_m)_n}^2 \\
	& \simeq \bigg[\sum_{n \geq 0 } + \sum_{n < 0} \bigg] \underbrace{\sum_{m \geq 0} (1+(m+1)^2 - n^2)^2 (m+1) \vertiii{\sum_{\substack{l \geq n+1 \\ l \geq 1}} \varpi^{+,-}_l(\pi_m)_n }^2}_{(\star)} .\\
\end{align*}

\subsubsection*{Case $n<0$.}
In the case $n < 0$, we have, for any fixed $\epsilon > 0$,
\begin{align*}
	\sum_{n < 0} (\star) & = \sum_{n<0} \sum_{m \geq 0} (1+(m+1)^2-n^2)(m+1)^{-1} \vertiii{\sum_{l \geq 1} (m+1)\varpi^{+,-}_l(\pi_m)_n }^2 \\
	& \la \sum_{n < 0} \sum_{m \geq 0} (1+ (m+1)^2 - n^2)^2 (m+1)^{-1} \sum_{l \geq 1} l (l + |n|)^{2\epsilon} \vertiii{(m+1)\varpi^{+,-}_l(\pi_m)_n}^2 \\
	&\phantom{\la} \times \sum_{k \geq 1} k^{-1} (k+|n|)^{-2\epsilon}.
\end{align*}
Now in the range $|n| \leq m \leq |n| + 2(l-1)$ the function $\lambda(m;n) = (1+(m+1)^2 -n^2)^2 (m+1)^{-1}$ is increasing, so is maximised at $m = |n| + 2(l-1)$. Hence, using \Cref{lemma:discrete_convolution_bound_CG_orthogonality},
\begin{align*}
	\sum_{n < 0} (\star) & \la \sum_{n < 0 } \sum_{l \geq 1} \frac{(1+(|n|+2l -1)^2 -n^2)^2}{(|n| + 2l -1)} l (l+|n|)^{2\epsilon} \sum_m \vertiii{(m+1)\varpi^{+,-}_l(\pi_m)_n}^2 \underbrace{\sum_{k \geq 1} k^{-1-2\epsilon}}_{= \zeta(1+2\epsilon) < \infty } \\
	& \la_\epsilon \sum_{n < 0} \sum_{l \geq 1} \frac{l^2(l+|n|)^2}{(l + |n|)} l (l+|n|)^{2\epsilon} \vertiii{\hat{g}(\pi_{l-n-1})}^2 \vertiii{\hat{f}(\pi_{l-1})}^2 \\
	& \la_\epsilon \sum_{n < 0} \sum_{l \geq 1} l^{3} \vertiii{\hat{f}(\pi_{l-1})}^2 (l+|n|)^{1+2\epsilon} \vertiii{\hat{g}(\pi_{l-n-1})}^2 \\
	& \la_\epsilon  \| f \|^2_{H^{1}(\mathbb{S}^3)} \| g \|^2_{H^\epsilon(\mathbb{S}^3)} \, .
\end{align*}

\subsubsection*{Case $n \geq 0$.} Here we proceed similarly; for any fixed $\epsilon > 0$ we have
\begin{align*}
		\sum_{n \geq 0} (\star) & = \sum_{n \geq 0} \sum_{m \geq 0} (1+(m+1)^2-n^2)(m+1)^{-1} \vertiii{\sum_{l \geq n + 1} (m+1)\varpi^{+,-}_l(\pi_m)_n }^2 \\
	& \la \sum_{n \geq 0} \sum_{m \geq 0} (1+ (m+1)^2 - n^2)^2 (m+1)^{-1} \sum_{l \geq n + 1} l (l-n)^{2\epsilon} \vertiii{(m+1)\varpi^{+,-}_l(\pi_m)_n}^2 \\
	& \phantom{\la} \times \sum_{k \geq n+1} k^{-1} (k-n)^{-2\epsilon},
\end{align*}
where now the range of $m$ is $n \leq m \leq -n+2(l-1)$, so that $\lambda(m)$ is increasing and so maximised at $m = -n+2(l-1)$. Hence, using \Cref{lemma:discrete_convolution_bound_CG_orthogonality} again,
\begin{align*}
		\sum_{n \geq 0} (\star) & \la \sum_{n \geq 0} \sum_{m \geq 0} (1+ (m+1)^2 - n^2)^2 (m+1)^{-1} \sum_{l \geq n + 1} l (l-n)^{2\epsilon} \vertiii{(m+1)\varpi^{+,-}_l(\pi_m)_n}^2 \\
		&\phantom{\la} \times \underbrace{\sum_{k \geq n+1} (k-n)^{-1-2\epsilon}}_{\leq \zeta(1+2\epsilon) < \infty} \\
		& \la_\epsilon \sum_{n \geq 0} \frac{(1+(2l -n -1))^2 - n^2)^2}{(2l-n-1)} \sum_{ l \geq n +1} l (l-n)^{2\epsilon} \vertiii{\hat{g}(\pi_{l-n-1})}^2 \vertiii{\hat{f}(\pi_{l-1})}^2 \\
		& \la_\epsilon \sum_{n \geq 0} \sum_{l \geq n+1} \frac{l^2(l-n)^2}{(l-n)} l (l-n)^{2\epsilon} \vertiii{\hat{g}(\pi_{l-n-1})}^2 \vertiii{\hat{f}(\pi_{l-1})}^2 \\
		& \la_\epsilon \|f \|^2_{H^{1}(\mathbb{S}^3)} \| g \|^2_{H^\epsilon(\mathbb{S}^3)} \, .
\end{align*}
We have therefore shown that for any fixed $\epsilon > 0$
\[ \|Q_0^{+,-}\|^2_{L^2(\mathbb{S}^1 \times \mathbb{S}^3)} \la_\epsilon \| f \|^2_{H^{1}(\mathbb{S}^3)} \| g \|^2_{H^\epsilon(\mathbb{S}^3)}.  \]
Putting this together with estimate \eqref{Q0++estimate_continued}, we conclude the proof of \Cref{thm:basic_estimate}. \hfill \qed

\begin{remark}
	It is the application of H\"older's inequality to split the sum over $l$ (i.e. over the Clebsch--Gordan expansion) in the above estimates that is fundamentally responsible for the loss of $\epsilon$ amount of differentiability; it is only here that the estimate is non-uniform in $\epsilon$. The constant $C(\epsilon)$ in \Cref{cor:inhomogeneous_basic_estimate} diverges like $\zeta(1)$, i.e. logarithmically, as $\epsilon \to 0$.
\end{remark}

\begin{remark} By a similar calculation (or as a corollary of \Cref{thm:estimates_with_multipliers} below with $\beta_w = 0$, $\beta_0 = -\epsilon$, $\alpha_1 = 1+\epsilon$, $\alpha_2 = \epsilon$), one also obtains 
\begin{equation}
	\label{basic_estimate_with_extra_epsilons}
	 \| Q_0(\phi, \psi) \|^2_{L^2([-\pi,\pi];H^{\epsilon}(\mathbb{S}^3))} \la \| f \|^2_{H^{1+\epsilon}(\mathbb{S}^3)} \| g \|^2_{H^{\epsilon}(\mathbb{S}^3)}.
\end{equation}
\end{remark}

\section{Estimates with Multipliers} \label{sec:estimates_with_multipliers}

Write $J = (1 - \slashed{\Delta})^{1/2}$ and $W = (2+ \Box)$ as before. We now turn to the proof of the most general estimate of \Cref{thm:estimates_with_multipliers}, i.e.
\[ \| J^{-\beta_0} W^{\beta_w} Q_0(\phi, \psi) \|_{L^2(\mathbb{S}^1 \times \mathbb{S}^3)} \la \| f \|_{H^{\alpha_1}(\mathbb{S}^3)} \| g \|_{H^{\alpha_2}(\mathbb{S}^3)}, \]
with the requirements on $\alpha_1$, $\alpha_2$, $\beta_0$ and $\beta_w$ as stated in \eqref{alpha_1_alpha_2_condition_1}--\eqref{alpha_1_alpha_2_condition_3} (see \Cref{thm:estimates_with_multipliers}).

\begin{proof}[Proof of \Cref{thm:estimates_with_multipliers}]

We proceed as in the case of \Cref{thm:basic_estimate}, by splitting the null form into the four terms $Q_0^{+,+}$, $Q_0^{+,-}$, $Q_0^{-,+}$ and $Q_0^{-,-}$. It is enough to treat $Q_0^{+,+}$ and $Q_0^{+,-}$. Recall that the operator $J = (1-\slashed{\Delta})^{1/2}$ is the Fourier multiplier by $(m+1)$ and $W = (2+\Box)$ is the Fourier multiplier by $|1+(m+1)^2 - n^2|$. 

\subsection*{The $(+,+)$ component.} We have
\begin{align*}
	& \| J^{-\beta_0} W^{\beta_w}Q_0^{+,+} \|^2_{L^2(\mathbb{S}^1 \times \mathbb{S}^3)} \\
	& \simeq \sum_{n \geq 2} \sum_{m \geq 0} (m+1)^{1-2\beta_0} |1 + ( m+1)^2 - n^2|^{2\beta_w} \vertiii{\mathcal{F}(Q_0^{+,+})(\pi_m)_n}^2 \\
	& \simeq \sum_{n \geq 2} \sum_{m \geq 0} (m+1)^{-1 - 2\beta_0} | 1 +(m+1)^2 - n^2|^{2+2\beta_w} \vertiii{ \sum_{l=1}^{n-1}(m+1) \varpi^{+,+}_l(\pi_m)_n }^2,
\end{align*}
where we wish to split the sum in $\vertiii{\cdot}^2$ in the last term in such a way as to eliminate the $(m+1)^{-1-2\beta_0}$ term. In order to do this, we need to understand the maximum of
\[ \lambda(m; n, \beta_0, \beta_w) \overset{\text{def}}{=} (m+1)^{-1-2\beta_0} |n^2-1-(m+1)^2|^{2+2\beta_w}. \]
Note that for $|n-2l| \leq m \leq n-2$ we have $n^2 - 1 - (m+1)^2 \geq 2(n-1) \geq 2$ for $n\geq 2$, so the term in absolute value is in fact positive. Differentiating $\lambda$ gives
\begin{align*} \frac{\d \lambda}{\d m}(m ; n, \beta_0, \beta_w) &= - ((1+2\beta_0)(n^2 - 1 - (m+1)^2) + 2(m+1)^2 (2 + 2\beta_w) ) \\
& \phantom{=} \ \times (m+1)^{-2-\beta_0} (n^2 - 1 - (m+1)^2)^{1+2\beta_w},	
\end{align*}
which is always non-positive provided $\beta_0 \geq - 1/2$ and $\beta_w \geq - 1$ (in the range of the $(+,+)$ component, i.e. $|n-2l| \leq m \leq n-2$). Hence $\lambda(m;n,\beta_0, \beta_w)$ is maximized at $\lambda(|n-2l|;n,\beta_0, \beta_w)$ in this range, so we have by \Cref{lem:aux_2},
\begin{align*}
	& \| J^{-\beta_0} W^{\beta_w} Q_0^{+,+} \|^2_{L^2(\mathbb{S}^1 \times \mathbb{S}^3)} \\
	& \la S(\beta_0, \alpha, \beta; \infty) \sum_{n \geq 2} \sum_{l=1}^{n-1} |1 + (|n-2l| + 1)^2 - n^2 |^{2 + 2\beta_w} l^\alpha (n-l)^{\beta} \vertiii{\hat{f}(\pi_{l-1})}^2 \vertiii{\hat{g}(\pi_{n-l-1})}^2 \\
	& \la \sum_{k \geq 1} \sum_{l \geq 1} k^{2+2\beta_w} l^{2+2\beta_w} l^\alpha k^\beta \vertiii{\hat{f}(\pi_{l-1})}^2 \vertiii{\hat{g}(\pi_{k-1})}^2 \\
	& \la \| f \|^2_{H^{\alpha_1}(\mathbb{S}^3)} \| g \|^2_{H^{\alpha_2}(\mathbb{S}^3)},
\end{align*}
provided $\alpha$, $\beta$ and $\beta_0$ satisfy the conditions of \Cref{lem:aux_2}, and where $1 + 2\alpha_1 = 2 + 2\beta_w + \alpha$ and $1 + 2\alpha_2 = 2 + 2\beta_w + \beta$. Translating in terms of $\alpha_1$ and $\alpha_2$, one obtains the conditions in the statement of \Cref{thm:estimates_with_multipliers}.

\subsection*{The $(+,-)$ component.} The range of $m$ is now $|n| \leq m \leq - n + 2(l-1)$, and therefore $(m+1)^2 \geq n^2$, so $\lambda(m;n,\beta_0,\beta_w)$ is given by 
\[ \lambda(m;n,\beta_0,\beta_w) = (m+1)^{-1-2\beta_0} (1+ (m+1)^2 - n^2)^{2+2\beta_w}. \]
Its derivative is given by
\begin{align*}
	\frac{\d \lambda}{\d m}(m;n,\beta_0,\beta_w) &= (m+1)^{-2-2\beta_0} (1+(m+1)^2 - n^2)^{1+2\beta_w} \\
	& \times \left( (n^2 - 1)(1+2\beta_0) + (m+1)^2(3+4\beta_w - 2\beta_0) \right),	
\end{align*}
which, with the exception of the $n=0$ mode, is uniformly positive in $m$ provided
\[ 1+ 2\beta_0 \geq 0 \quad \text{and} \quad 3 + 4\beta_w - 2\beta_0 \geq 0, \]
i.e. exactly when \eqref{alpha_1_alpha_2_condition_3} is satisfied. We therefore have
\begin{align*}
	\| J^{-\beta_0} W^{\beta_w} Q_0^{+,-} \|^2_{L^2(\mathbb{S}^1 \times \mathbb{S}^3)} &\simeq \sum_{n \in \mathbb{Z}} \sum_{m \geq 0} \lambda(m;n,\beta_0,\beta_w) \vertiii{\sum_{\substack{l \geq n+1 \\ l \geq 1}} (m+1) \varpi_l^{+,-}(\pi_m)_n }^2,
\end{align*}
and we split the estimate into the cases $n \neq 0$ and $n=0$.

\subsubsection*{Case $n \neq 0$}

When $n \neq 0$, $\lambda(m;n,\beta_0,\beta_w)$ is increasing in $m$ in this range, so is maximised at $m = - n + 2(l-1)$. Hence for a fixed $l$, noting that $2l - n - 1 \ga l^{1/2} (l-n)^{1/2}$,
\begin{align*}
	\lambda(m;n,\beta_0,\beta_w) & \leq (2l - n -1)^{-1-2\beta_0} ( 1+ (2l-n-1)^2 - n^2 )^{2+2\beta_w} \\
	& \la l^{-1/2 - \beta_0} (l-n)^{-1/2 - \beta_0} l^{2+2\beta_w} (l-n)^{2+2\beta_w}.
\end{align*}
Hence, for any $\alpha \geq 0$, $\beta \geq 0$ such that $\alpha + \beta > 1$,
\begin{align*}
	\sum_{n \neq 0} \sum_{m \geq 0} & \lambda(m;n,\beta_0, \beta_w) \vertiii{\sum_{\substack{l \geq n+1 \\ l \geq 1}} (m+1) \varpi_l^{+,-}(\pi_m)_n }^2 \\
	& \la \sum_{n \neq 0} \sum_{l \geq n + 1} l^{3/2+2\beta_w-\beta_0} (l-n)^{3/2+2\beta_w - \beta_0} l^\alpha (l-n)^\beta \vertiii{\hat{f}(\pi_{l-1})}^2 \vertiii{\hat{g}(\pi_{l-n-1})}^2 \\
	& \la \| f \|^2_{H^{\alpha_1}(\mathbb{S}^3)} \|g \|^2_{H^{\alpha_2}(\mathbb{S}^3)},
\end{align*}
where $1+2\alpha_1 = 3/2 + 2\beta_w - \beta_0 + \alpha$ and $1+2\alpha_2  = 3/2 + 2\beta_w - \beta_0 + \beta$. Hence we require 
\[ \alpha_1 \geq \beta_w + \frac{1}{4} - \frac{1}{2} \beta_0 \quad \text{and} \quad \alpha_2 \geq \beta_w + \frac{1}{4} - \frac{1}{2} \beta_0, \]
which follow from \cref{alpha_1_alpha_2_condition_3,alpha_1_alpha_2_condition_4,alpha_1_alpha_2_condition_5}, and
\[ \alpha_1 + \alpha_2 + \beta_0 > 1 + 2\beta_w, \]
which is exactly \eqref{alpha_1_alpha_2_condition_1}.

\subsubsection*{Case $n=0$} The $n=0$ mode we treat separately, noting that now $ 0 \leq m \leq 2(l-1)$, and we seek to control
\begin{align*}
	\sum_{m \geq 0} (m+1)^{4\beta_w + 3 - 2\beta_0} & \vertiii{ \sum_{l \geq 1 + \frac{m}{2}} (m+1) \varpi^{+,-}_l(\pi_m)_0}^2 \\
	& \la_\theta \sum_{m \geq 0} \sum_{l \geq 1} (l-1)^{4	\beta_w + 3 - 2\beta_0 + \theta} \vertiii{ (m+1) \varpi^{+,-}_l(\pi_m)_0}^2 \\
	& \la_\theta \sum_{l \geq 1} (l-1)^{4\beta_w + 3 - 2\beta_0 + \theta} \vertiii{\hat{f}(\pi_{l-1})}^2 \vertiii{\hat{g}(\pi_{l-1})}^2
\end{align*}
for $\theta > 1$. But this is bounded by
\[ \left( \sum_{l \geq 1} (l-1)^{3/2 + 2\beta_w - \beta_0 + \alpha} \vertiii{\hat{f}(\pi_{l-1})}^2  \right) \left( \sum_{l \geq 1} (l-1)^{3/2 + 2\beta_w - \beta_0 + \beta} \vertiii{\hat{g}(\pi_{l-1})}^2 \right), \]
where $\alpha + \beta = \theta > 1$, $\alpha \geq 0$, $\beta \geq 0$, which gives precisely the same conclusion as in the $n \neq 0$ case.
\end{proof}

\section{Proof of \Cref{cor:inhomogeneous_basic_estimate}}
\label{sec:proof_of_first_corollary}

In this section we show that \Cref{cor:inhomogeneous_basic_estimate} follows from estimate \eqref{basic_estimate_with_extra_epsilons}, i.e.
\begin{equation}
	\label{null_form_estimates}
	\int_{-\pi}^{\pi} \|Q_0(\phi,\psi)\|^2_{H^{\epsilon}(\mathbb{S}^3)} \, \d t \la \| f _1 \|^2_{H^{1+\epsilon}(\mathbb{S}^3)} \| g_1 \|^2_{H^{\epsilon}(\mathbb{S}^3)}
\end{equation}
for solutions $\phi$, $\psi$ of
\begin{align}
\label{free_equations_with_time_data}
\begin{split}
	& \Box_{\mathbb{R} \times \mathbb{S}^3} \phi + \phi = 0, \qquad \qquad \qquad \! \Box_{\mathbb{R} \times \mathbb{S}^3} \psi + \psi = 0, \qquad \text{with} \\
	& (\phi, \partial_t \phi)|_{t=0} = (0, f_1), \qquad \qquad (\psi, \partial_t \psi)|_{t=0} = (0, g_1).
\end{split}	
\end{align}
Note that for \eqref{null_form_estimates} it is obvious that $[-\pi,\pi]$ can be replaced with $[0,T]$ for any $T>0$.

\subsection{First reduction} 

We first show that \eqref{null_form_estimates} implies the estimate
\begin{equation}
\label{derivative_estimates}
\int_{-\pi}^\pi \| Q_0(\phi,\psi) \|^2_{H^{1+\epsilon}(\mathbb{S}^3)} \, \d t + \int_{-\pi}^{\pi} \| \partial_t Q_0(\phi,\psi) \|^2_{H^\epsilon(\mathbb{S}^3)} \, \d t \la \| f_1 \|^2_{H^{1+\epsilon}(\mathbb{S}^3)} \| g_1 \|^2_{H^{1+\epsilon}(\mathbb{S}^3)}
\end{equation}
for solutions $\phi$, $\psi$ of \eqref{free_equations_with_time_data}. For the time derivative, we note\footnote{Note that $(1-\slashed{\Delta})^{1/2}$ commutes with $(\Box_{\mathbb{R}\times\mathbb{S}^3)} + 1) = \partial_t^2 + (1-\slashed{\Delta})$, and $\| \slashgrad f \|_{L^2} \simeq \| (1-\slashed{\Delta})^{1/2} f \|_{L^2} $.} that $\partial_t \phi^{\pm} = \pm i (1-\slashed{\Delta})^{1/2} \phi^\pm$, where $\phi^\pm$ are the positive and negative frequency parts of $\phi$ (see \eqref{discreteposnegfrequencies}), and $\phi$ is given by $\phi = \frac{1}{2i}(\phi^+ - \phi^-)$. Then, since \eqref{null_form_estimates} holds for all signs $Q_0(\phi^\pm, \psi^\pm)$, the time derivative estimate follows from
\begin{align*}
	\int_{-\pi}^{\pi} \| \partial_t Q_0(\phi, \psi) \|^2_{H^\epsilon(\mathbb{S}^3)}  \, \d t & \la \sum_{\pm} \| Q_0(\partial_t \phi^\pm, \psi^\pm) \|^2_{L^2([-\pi,\pi]; H^\epsilon(\mathbb{S}^3))} \\
	& \phantom{\la} + \sum_\pm \| Q_0(\phi^\pm, \partial_t \psi^\pm) \|^2_{L^2([-\pi,\pi];H^\epsilon(\mathbb{S}^3))} \\
	& \la \sum_\pm \| Q_0((1-\slashed{\Delta})^{1/2} \phi^\pm, \psi^\pm) \|^2_{L^2([-\pi,\pi];H^\epsilon(\mathbb{S}^3))} \\ 
	& \phantom{\la} + \sum_\pm \| Q_0(\phi^\pm, (1-\slashed{\Delta})^{1/2} \psi^\pm) \|^2_{L^2([-\pi,\pi]; H^\epsilon(\mathbb{S}^3))} \\
	& \la \| Q_0((1-\slashed{\Delta})^{1/2}\phi, \psi) \|^2_{L^2([-\pi,\pi];H^\epsilon(\mathbb{S}^3))} \\
	& \phantom{\la} + \| Q_0(\phi, (1-\slashed{\Delta})^{1/2} \psi) \|^2_{L^2([-\pi,\pi];H^\epsilon(\mathbb{S}^3))} \\
	& \la \| f_1 \|^2_{H^{1+\epsilon}(\mathbb{S}^3)} \| g_1 \|^2_{H^{1+\epsilon}(\mathbb{S}^3)}.
\end{align*}
In particular, this also shows that if $\phi$ has data $(f_0, f_1)$ and $\psi$ has data $(0, g_1)$, then writing $\phi = \partial_t \phi_0 + \phi_1$ for $\phi_i$ such that $\Box_{\mathbb{R} \times \mathbb{S}^3} \phi_i + \phi_i = 0$, $(\phi_i, \partial_t \phi_i)|_{t=0} = (0, f_i)$, and using the positive/negative frequency splitting we have similarly
\begin{align}
	\label{null_form_estimates_one_side_full_data}
	\begin{split}
		\| Q_0(\phi,\psi) \|_{L^2([-\pi,\pi];H^\epsilon(\mathbb{S}^3))} & \la \| Q_0(\partial_t \phi_0, \psi )\|_{L^2([-\pi,\pi];H^\epsilon(\mathbb{S}^3))} +  \| Q_0(\phi_1, \psi )\|_{L^2([-\pi,\pi];H^\epsilon(\mathbb{S}^3))} \\
		& \la \left(\| f_0 \|_{H^{1+\epsilon}(\mathbb{S}^3)} + \|f_1\|_{H^\epsilon(\mathbb{S}^3)} \right) \| g_1 \|_{H^{1+\epsilon}(\mathbb{S}^3)}.
	\end{split}
\end{align}

For the spatial derivatives $\slashgrad_i$, we differentiate the equations \eqref{free_equations_with_time_data}. Unlike in the case of $\mathbb{R}^{1+3}$, the situation here is slightly complicated by the fact that $\mathbb{S}^3$ is not flat, so the spatial derivatives $\slashed{\nabla}_i$ do not commute with the Laplacian $\slashed{\Delta}$. This means that $\slashgrad_i \phi$ now satisfies the perturbed equation $\Box_{\mathbb{R} \times \mathbb{S}^3} (\slashgrad_i \phi) + 3 (\slashgrad_i \phi) = 0$ with data $(0, \slashgrad_i f_1)$, and similarly for $\slashgrad_i \psi$. To deal with this, we use Duhamel's principle to write $\slashgrad_i \phi$ as the sum
\[ \slashgrad_i \phi = \Phi_1 + \Phi_2, \]
where $\Phi_1$ solves
\[ \Box_{\mathbb{R} \times \mathbb{S}^3} \Phi_1 + \Phi_1 = 0 \qquad \text{with} \qquad (\Phi_1, \partial_t \Phi_1)|_{t=0} = (0, \slashgrad_i f_1), \]
and
\[ \Phi_2(t,x) = \int_0^t \Phi_2^s(t-s,x) \, \d s, \]
where $\Phi_2^s$ solves
\[ \Box_{\mathbb{R} \times \mathbb{S}^3} \Phi_2^s + \Phi_2^s = 0 \qquad \text{with} \qquad (\Phi_2^s, \partial_t \Phi_2^s)|_{t=0} = (0, -2\slashgrad_i \phi(s,x)), \]
so that $\Phi_2$ solves
\[ \Box_{\mathbb{R} \times \mathbb{S}^3} \Phi_2 + \Phi_2 = -2 \slashgrad_i \phi \qquad \text{with} \qquad (\Phi_2, \partial_t \Phi_2)|_{t=0} = (0,0). \]
Then the bounds involving $\Phi_1$ (and the analogous $\Psi_1$) are immediate,
\begin{align*}
	\int_{-\pi}^{\pi} \| \slashgrad Q_0(\phi, \psi) \|^2_{H^\epsilon(\mathbb{S}^3)} \, \d t & \la \| Q_0(\Phi_1, \psi) \|^2_{L^2([-\pi,\pi];H^\epsilon(\mathbb{S}^3))} + \| Q_0(\Phi_2, \psi)\|^2_{L^2([-\pi,\pi];H^\epsilon(\mathbb{S}^3))} \\
	& \phantom{\la} + \| Q_0(\phi, \Psi_1) \|^2_{L^2([-\pi,\pi];H^\epsilon(\mathbb{S}^3))} + \| Q_0(\phi, \Psi_2) \|^2_{L^2([-\pi,\pi ];H^\epsilon(\mathbb{S}^3))} \\
	& \la \| Q_0(\Phi_2, \psi)\|^2_{L^2([-\pi,\pi];H^\epsilon(\mathbb{S}^3))} + \| Q_0(\phi, \Psi_2) \|^2_{L^2([-\pi,\pi ];H^\epsilon(\mathbb{S}^3))} \\
	& \phantom{\la} +  \| f_1 \|^2_{H^{1+\epsilon}(\mathbb{S}^3)} \| g_1 \|^2_{H^{1+\epsilon}(\mathbb{S}^3)},
\end{align*}
so it remains to bound the forms involving $\Phi_2$ and $\Psi_2$. This is even easier; we have, using that $\Phi_2^s(0,x) = 0$ for all $s$,
\[ Q_0(\Phi_2, \psi) = \int_0^t Q_0(\Phi_2^s(t-s,x), \psi(t,x)) \, \d s, \]
so, using \eqref{null_form_estimates_one_side_full_data} followed by the energy inequality for $\Phi_2^s$ and then the energy inequality for $\phi$, one gets
\begin{align*}
	\| Q_0(\Phi_2, \psi) \|_{L^2([-\pi,\pi];H^\epsilon(\mathbb{S}^3))} & \la \int_{-\pi}^{\pi} \| Q_0(\Phi_2^s(t-s,x), \psi(t,x) ) \|_{L^2([-\pi,\pi];H^\epsilon(\mathbb{S}^3))} \, \d s	 \\
	& \la \int_{-\pi}^{\pi} \left( \|\Phi_2^s(-s,\cdot)\|_{H^{1+\epsilon}(\mathbb{S}^3)} + \|\partial_t\Phi_2^s(-s,\cdot)\|_{H^\epsilon(\mathbb{S}^3)} \right) \| g_1 \|_{H^{1+\epsilon}(\mathbb{S}^3)} \, \d s \\
	& \la \int_{-\pi}^{\pi} \| \slashgrad \phi(s,\cdot) \|_{H^\epsilon(\mathbb{S}^3)} \| g_1 \|_{H^{1+\epsilon}(\mathbb{S}^3)} \, \d s \\
	& \la \int_{-\pi}^{\pi} \| f_1\|_{H^\epsilon(\mathbb{S}^3)} \| g_1 \|_{H^{1+\epsilon}(\mathbb{S}^3)} \, \d s \\
	& \la \| f_1 \|_{H^\epsilon(\mathbb{S}^3)} \| g_1 \|_{H^{1+\epsilon}(\mathbb{S}^3)}.
\end{align*}
The estimate for $Q_0(\phi, \Psi_2)$ is analogous. This establishes the implication \eqref{null_form_estimates}$\implies$\eqref{derivative_estimates}.

\subsection{Second reduction} \label{sec:second_reduction}

We now show that \eqref{derivative_estimates} implies \Cref{cor:inhomogeneous_basic_estimate}. To do this, we must incorporate solutions $\phi$ with general data and right-hand side $(f_0,f_1,F)$. Such a solution may be written as the sum of solutions with $(f_0,0,0)$, $(0,f_1,0)$, and $(0,0,F)$, so consider first the case $F = G = 0$. If $\phi_i$, $i=1,2$, solve
\[ \Box_{\mathbb{R} \times \mathbb{S}^3} \phi_i + \phi_i = 0 \qquad \text{with} \qquad (\phi_i, \partial_t \phi_i)|_{t=0} = (0, f_i), \]
and similarly for $\psi_i$, then, as before, $\phi = \partial_t \phi_0 + \phi_1$ solves $\Box_{\mathbb{R} \times \mathbb{S}^3} \phi + \phi = 0$ with data $(f_0, f_1)$, and similarly for $\psi = \partial_t \psi_0 + \psi_1$. Then \eqref{derivative_estimates} implies \Cref{cor:inhomogeneous_basic_estimate} with $F = G = 0$ by the same argument used to deduce \eqref{null_form_estimates_one_side_full_data}. To deduce the full estimate, it therefore suffices to treat the case $f_0 = f_1 = g_0 = g_1 =0$, $F, G \neq 0$. By Duhamel's principle again, the solution with zero initial data and right-hand side $F$ can be written in terms of a solution with data $(0,f_1)$ and zero right-hand side, i.e. if $\Phi_s(t,x)$ satisfies 
\[
	\Box_{\mathbb{R} \times \mathbb{S}^3} \Phi_s + \Phi_s = 0 \qquad \text{with} \qquad (\Phi_s, \partial_t \Phi_s)|_{t=0} = (0, F(s,x)),	
\]
then
\[ \phi(t,x) = \int_0^t \Phi_s(t-s,x) \, \d s \]
solves $\Box_{\mathbb{R} \times \mathbb{S}^3} \phi + \phi = F$ with data $(f_0,f_1) = (0,0)$. Similarly, we have $\Psi_s$, where
\[ \psi(t,x) = \int_0^t \Psi_s(t-s,x) \, \d s \]
satisfies $\Box_{\mathbb{R} \times \mathbb{S}^3} \psi + \psi = G$ with data $(g_0,g_1) = (0,0)$. Then it is easy to check, similarly to before, that
\[ Q_0(\phi,\psi)(t,x) = \int_0^t \int_0^t Q_0(\Phi_s(t-s,x), \Psi_{r}(t-r,x) ) \, \d s \, \d r. \]
For spatial derivatives it then follows trivially that
\[ \slashed{\nabla} Q_0(\phi,\psi)(t,x) = \int_0^t \int_0^t \slashed{\nabla} Q_0(\Phi_s(t-s,x), \Psi_{r}(t-r,x) ) \, \d s \, \d r. \]
We can then estimate, using the already-established \Cref{cor:inhomogeneous_basic_estimate} with $F= G = 0$,
\begin{align}
	\label{spatial_derivative_estimate_null_form}
	\begin{split}
	\| \slashed{\nabla} Q_0(\phi,\psi) \|_{L^2([-\pi,\pi];H^{\epsilon}(\mathbb{S}^3))} & \la \int_{-\pi}^\pi \int_{-\pi}^{\pi} \| \slashed{\nabla} Q_0(\Phi_s(t-s,x), \Psi_r(t-r,x) \|_{L^2([-\pi,\pi];H^{\epsilon}(\mathbb{S}^3))} \, \d s \, \d r \\
	& \la \int_{-\pi}^{\pi} \bigg( \| \Phi_s(-s,\cdot) \|_{H^{2+\epsilon}(\mathbb{S}^3)} + \| \partial_t \Phi_s(-s, \cdot) \|_{H^{1+\epsilon}(\mathbb{S}^3)} \bigg) \d s \\
	& \times \int_{-\pi}^{\pi} \bigg( \| \Psi_r(-r,\cdot) \|_{H^{2+\epsilon}(\mathbb{S}^3)} + \| \partial_t \Psi_r(-r,\cdot) \|_{H^{1+\epsilon}(\mathbb{S}^3)} \bigg) \d r \\
	& \la \int_{-\pi}^{\pi} \| F(s,\cdot) \|_{H^{1+\epsilon}(\mathbb{S}^3)} \, \d s \int_{-\pi}^{\pi} \| G(r,\cdot) \|_{H^{1+\epsilon}(\mathbb{S}^3)} \, \d r,
	\end{split}
\end{align}
where in the last line we used the energy inequality for $\Phi_s$ and $\Psi_r$, in evolution from time $-s$ (resp. $-r$) to time zero. Combining the three cases $(f_0,0,0)$, $(0,f_1,0)$ and $(0,0,F)$ (and the same for $g_i$, $G$), this establishes the implication \eqref{derivative_estimates}$\implies$\Cref{cor:inhomogeneous_basic_estimate}.

\subsection{Estimate for the time derivative}

For the time derivative, on the other hand, one gets
\begin{align*}
	\partial_t Q_0(\phi, \psi)(t,x) &= F(t,x) (\partial_t \psi)(t,x) + G(t,x) (\partial_t \phi)(t,x) \\
	& +  \int_0^t \int_0^t \partial_t Q_0(\Phi_s(t-s,x), \Psi_r(t-r,x)) \, \d s \, \d r .
\end{align*}
In this, the integrated time derivative of $Q_0$ can be controlled in the same way (\eqref{spatial_derivative_estimate_null_form}) as the spatial derivatives of $Q_0$, but the appearance of the quadratic terms $F \partial_t \psi$ and $G \partial_t \phi$ and the fact that $\epsilon$ must be positive requires the Kato--Ponce inequality \cite{KatoPonce1988,KenigPonceVega1993}\footnote{See also \cite{BaeBiswas2015,MuscaluSchlag2013,ChristWeinstein1991,GulisashviliKon1996,Grafakos2012,GrafakosOh2013,Li2018}. Note that we need \eqref{Kato_Ponce_inequality} on $\mathbb{S}^3$, which can be obtained from the Euclidean setting using a partition of unity.}
\begin{equation}
	\label{Kato_Ponce_inequality}
	\| J^\epsilon(fg) \|_{L^r} \la \| f \|_{L^{p_1}} \| J^\epsilon g \|_{L^{q_1}} + \| J^\epsilon f \|_{L^{p_2}} \| g \|_{L^{q_2}},	
\end{equation}
where $J^\epsilon = (1-\slashed{\Delta})^{\epsilon/2}$, $\frac{1}{r} = \frac{1}{p_1} + \frac{1}{q_1} = \frac{1}{p_2} + \frac{1}{q_2}$, and $1<r<\infty$, $1< p_1$, $p_2$, $q_1$, $q_2 < \infty$. We then have
\begin{align*}
	\| F \partial_t \psi \|_{H^{\epsilon}(\mathbb{S}^3)} & = \| J^\epsilon (F \partial_t \psi) \|_{L^2(\mathbb{S}^3)} \\
	& \la \|F\|_{L^3(\mathbb{S}^3)} \| J^\epsilon(\partial_t \psi) \|_{L^6(\mathbb{S}^3)} + \| J^\epsilon(F) \|_{L^3(\mathbb{S}^3)} \| \partial_t \psi \|_{L^6(\mathbb{S}^3)} \\
	& \la \| F \|_{H^{1+\epsilon}(\mathbb{S}^3)} \| \partial_t \psi \|_{H^{1+\epsilon}(\mathbb{S}^3)},
\end{align*}
so, using the Sobolev embedding $H^1(\mathbb{S}^3) \hookrightarrow L^6(\mathbb{S}^3)$ and the energy inequality for $\psi$,
\begin{align*}
	\| F \partial_t \psi \|_{L^2([-\pi,\pi];H^{\epsilon}(\mathbb{S}^3))} & \la \|F\|_{L^2([-\pi,\pi];H^{1+\epsilon}(\mathbb{S}^3))} \| \partial_t \psi \|_{L^\infty([-\pi,\pi];H^{1+\epsilon}(\mathbb{S}^3))} \\
	& \la \|F\|_{L^2([-\pi,\pi];H^{1+\epsilon}(\mathbb{S}^3))} \\
	& \times \left( \| g_0 \|_{H^{2+\epsilon}(\mathbb{S}^3)} + \| g_1 \|_{H^{1+\epsilon}(\mathbb{S}^3)} + \int_{-\pi}^{\pi} \| G(t,\cdot)\|_{H^{1+\epsilon}(\mathbb{S}^3)} \, \d t \right) \\
	& \la \left( \| f_0 \|_{H^{2+\epsilon}(\mathbb{S}^3)} + \| f_1 \|_{H^{1+\epsilon}(\mathbb{S}^3)} + \| F \|_{L^2([-\pi,\pi];H^{1+\epsilon}(\mathbb{S}^3))} \right) \\
	& \hspace{2pt} \times \left( \| g_0 \|_{H^{2+\epsilon}(\mathbb{S}^3)} + \| g_1 \|_{H^{1+\epsilon}(\mathbb{S}^3)} + \| G \|_{L^2([-\pi,\pi];H^{1+\epsilon}(\mathbb{S}^3))} \right),
\end{align*}
as well as the same statement for $G \partial_t \phi$. This then implies the estimate
\begin{align}
\begin{split}
\label{main_estimate_with_time_derivative}
 \| Q_0(\phi,\psi) \|_{L^2([-\pi,\pi];H^{1+\epsilon}(\mathbb{S}^3))} &+ \| \partial_t Q_0(\phi,\psi) \|_{L^2([-\pi,\pi];H^{\epsilon}(\mathbb{S}^3))}  \\
	& \la \big( \| f_0 \|_{H^{2+\epsilon}(\mathbb{S}^3)} + \| f_1 \|_{H^{1+\epsilon}(\mathbb{S}^3)} + \| F \|_{L^2([-\pi,\pi];H^{1+\epsilon}(\mathbb{S}^3))} \big) \\
	& \hspace{1pt} \times \big( \| g_0 \|_{H^{2+\epsilon}(\mathbb{S}^3)} + \| g_1 \|_{H^{1+\epsilon}(\mathbb{S}^3)} + \| G \|_{L^2([-\pi,\pi];H^{1+\epsilon}(\mathbb{S}^3))} \big) .
\end{split}
\end{align}

\section{Proof of the Weighted Estimate on $\mathbb{R}^{1+3}$} \label{sec:conformal_estimate}

Finally, we turn to the proof of the conformally weighted estimate of \Cref{cor:weighted_estimates}. Here we use the conformal invariance of the wave equation; if $\phi$ satisfies 
\[ \Box_{\mathbb{R} \times \mathbb{S}^3} \phi + \phi = 0 \]
on $\mathbb{R} \times \mathbb{S}^3$ with data $(f_0, f_1)$, then $\tilde{\phi} = \Omega \phi$ satisfies
\[ \Box_{\mathbb{R}^{1+3}} \tilde{\phi} = 0 \]
on $\mathbb{R}^{1+3}$ with data $(\tilde{f}_0, \tilde{f}_1)=(\omega f_0, \omega^2 f_1)$, where $\omega = 2(1+ \tilde{r}^2)^{-1} = \Omega|_{\tilde{t}=0}$, and $\Omega$ is the conformal factor
\begin{align}
\label{conformal_factor}
\begin{split}
 \Omega &= 2  \left( 1 + (\tilde{t} - \tilde{r})^2 \right)^{-1/2} \left( 1 + (\tilde{t} + \tilde{r})^2 \right)^{-1/2} \\
& = 2 \cos \left( \frac{t-z}{2} \right) \cos \left( \frac{t+z}{2} \right),
\end{split}
\end{align}
where $\tilde{t}$, $\tilde{r}$ are the usual coordinates on $\mathbb{R}^{1+3}$ and $z = \arctan(\tilde{t} + \tilde{r}) - \arctan(\tilde{t} - \tilde{r})$ is one of the angles on $\mathbb{S}^3$, the metric $g_{\mathbb{S}^3}$ being given by $\d z^2 + (\sin^2 z) g_{\mathbb{S}^2}$. Note that $\Omega$ is a smooth function on $\mathbb{R} \times \mathbb{S}^3$, and we have
\[ Q_0(\Omega, \Omega) = 2 \Omega \sin \left( \frac{t-z}{2} \right) \sin \left( \frac{t+z}{2} \right). \]
Writing $\widetilde{Q}_0(\tilde{\phi},\tilde{\psi})$ to denote the null form on $(\mathbb{R}^{1+3}, \eta_{ab})$,
\[ \widetilde{Q}_0(\tilde{\phi},\tilde{\psi}) = \tilde{\nabla}_a \tilde{\phi} \tilde{\nabla}^a \tilde{\psi}, \]
a calculation shows 
\begin{equation}
\label{conformal_null_form}
	\widetilde{Q}_0(\tilde{\phi}, \tilde{\psi}) = \Omega^4 Q_0(\phi, \psi) + \Omega^3 \phi Q_0(\Omega, \psi) + \Omega^3 \psi Q_0(\Omega, \phi) + \Omega^2 \phi \psi Q_0(\Omega, \Omega),
\end{equation}
or, taking into account the above expression for $Q_0(\Omega, \Omega)$,
\begin{equation}
\label{conformal_null_form_2}
 \widetilde{Q}_0(\tilde{\phi}, \tilde{\psi}) = \Omega^4 Q_0(\phi, \psi) + \Omega^3 \left( \mathfrak{a}^\mu \phi \nabla_\mu \psi + \mathfrak{b}^\mu \psi \nabla_\mu \phi + \mathfrak{c} \phi \psi \right)
\end{equation}
for a smooth function $\mathfrak{c}$ and smooth vector fields $\mathfrak{a}^\mu$, $\mathfrak{b}^\mu$ on $\mathbb{R} \times \mathbb{S}^3$. Note that the standard foliation of $\mathbb{R} \times \mathbb{S}^3$ which was used for our estimates in the previous sections corresponds to a hyperboloidal foliation on $\mathbb{R}^{1+3}$, given by the level sets $\Sigma_t \simeq \mathbb{R}^3$ of the function
\[ t = \arctan(\tilde{t} + \tilde{r}) + \arctan(\tilde{t} - \tilde{r}) \in (-\pi, \pi), \]
with unit normal (with respect to $\eta_{ab} = \d \tilde{t}^2 - \d \tilde{r}^2 - \tilde{r}^2 g_{\mathbb{S}^2}$)
\[ T^a \partial_a = \left( 1+ 2(\tilde{t}^2 + \tilde{r}^2) + (\tilde{t}^2 - \tilde{r}^2)^2 \right)^{-1} \left( 2 (1+\tilde{t}^2 + \tilde{r}^2) \frac{\partial}{\partial \tilde{t}} + 4 \tilde{t} \tilde{r} \frac{\partial}{\partial \tilde{r}} \right). \]
The metric on $\mathbb{R} \times \mathbb{S}^3$ is related to the Minkowski metric $\eta_{ab}$ by
\[ g^{\mathbb{R} \times \mathbb{S}^3}_{ab} = \d t^2 - g_{\mathbb{S}^3} = \Omega^2 \eta_{ab}, \]
so writing $\Omega^{-2} g_{\mathbb{S}^3} \overset{\text{def}}{=} g_{\Sigma_t}$, we have $\dvol_{\Sigma_t} = \Omega^{-3} \dvol_{\mathbb{S}^3}$, and
\[ \eta_{ab} = \Omega^{-2} \d t^2 - g_{\Sigma_t}, \]
where the volume form of $g^{\mathbb{R}\times \mathbb{S}^3}$ is related to the volume form of $\eta$ by $\dvol_\eta = \Omega^{-4} \d t \wedge \dvol_{\mathbb{S}^3} = \Omega^{-1} \d t \wedge \dvol_{\Sigma_t}$. From \eqref{conformal_null_form_2}, we have
\begin{align*}
	\Omega^{-6} | \widetilde{Q}_0(\tilde{\phi},\tilde{\psi})|^2 &\la \Omega^2 |Q_0(\phi,\psi)|^2 + |\phi|^2(|\partial_t \psi|^2 + |\slashgrad \psi|^2) + |\psi|^2(|\partial_t \phi|^2 + |\slashgrad \phi|^2) + |\phi|^2 |\psi|^2, 	
\end{align*}
so integrating with respect to $\dvol_{\mathbb{S}^3}$, at fixed $t$,
\begin{align*}
	\int_{\Sigma_t} \Omega^{-3} |\widetilde{Q}_0(\tilde{\phi},\tilde{\psi})|^2 \dvol_{\Sigma_t} & \la \| Q_0(\phi,\psi) \|^2_{L^2(\mathbb{S}^3)} (t) \\
	& + \int_{\{ t \} \times \mathbb{S}^3} \left( |\phi|^2(|\partial_t \psi|^2 + |\slashgrad \psi|^2) + |\psi|^2(|\partial_t \phi|^2 + |\slashgrad \phi|^2) + |\phi|^2 |\psi|^2 \right) \dvol_{\mathbb{S}^3}	\\
	& \la \| Q_0(\phi,\psi) \|^2_{L^2(\mathbb{S}^3)} (t) + \| \phi \|^2_{L^\infty(\mathbb{S}^3)}(t) \left( \| \partial_t \psi \|^2_{L^2(\mathbb{S}^3)}(t) + \| \psi \|^2_{H^1(\mathbb{S}^3)}(t) \right) \\
	& + \| \psi \|^2_{L^6(\mathbb{S}^3)}(t) \left( \| \partial_t \phi \|^2_{L^3(\mathbb{S}^3)}(t) + \| \slashgrad \phi \|^2_{L^3(\mathbb{S}^3)}(t) \right),
\end{align*}
where in the last line we used H\"older's inequality with $\frac{1}{p} + \frac{1}{q} = \frac{1}{6} + \frac{1}{3} = \frac{1}{2}$ to control the terms of the form $|\psi|^2 |\partial \phi |^2$. By the compactness of $\mathbb{S}^3$ and using the embeddings $H^1(\mathbb{S}^3) \hookrightarrow L^6(\mathbb{S}^3)$ and $H^2(\mathbb{S}^3) \hookrightarrow L^\infty(\mathbb{S}^3)$ we obtain, after using the energy inequality \eqref{energy_inequality} for $\phi$ and $\psi$,
\begin{align*}
	\int_{\Sigma_t}	\Omega^{-3} |\widetilde{Q}_0(\tilde{\phi}, \tilde{\psi})|^2 \dvol_{\Sigma_t} & \la \| Q_0 (\phi,\psi)\|^2_{L^2(\mathbb{S}^3)}(t) \\
	& + \left( \| f_0 \|^2_{H^2(\mathbb{S}^3)} + \| f_1 \|^2_{H^1(\mathbb{S}^3)} \right) \left( \| g_0 \|^2_{H^1(\mathbb{S}^3)} + \| g_1 \|^2_{L^2(\mathbb{S}^3)} \right).
\end{align*}
Hence integrating in $-\pi \leq t \leq \pi$ and using \eqref{basic_estimate_2} gives
\begin{align*}
	\| \Omega^{-1} \widetilde{Q}_0(\tilde{\phi}, \tilde{\psi}) \|^2_{L^2(\mathbb{R}^4)} & \la \left( \| f_0 \|^2_{H^2(\mathbb{S}^3)} + \| f_1 \|^2_{H^1(\mathbb{S}^3)} \right) \left( \| g_0 \|^2_{H^{1+\epsilon}(\mathbb{S}^3)} + \| g_1 \|^2_{H^{\epsilon}(\mathbb{S}^3)} \right),
\end{align*}
or, using the definition of weighted Sobolev spaces \eqref{weighted_spaces_definition} and noting that $(\tilde{f}_0, \tilde{f}_1) = (\omega f_0, \omega^2 f_1)$ and similarly for $(\tilde{g}_0, \tilde{g}_1)$,
\[ \left\| \langle \tilde{t} - \tilde{r} \rangle \langle \tilde{t} + \tilde{r} \rangle \widetilde{Q}_0(\tilde{\phi},\tilde{\psi}) \right\|^2_{L^2(\mathbb{R}^4)} \la \left( \| \tilde{f}_0 \|^2_{W^2_1(\mathbb{R}^3)} + \| \tilde{f}_1 \|^2_{W^1_2(\mathbb{R}^3)} \right) \left( \| \tilde{g}_0 \|^2_{W^{1+\epsilon}_{\epsilon}(\mathbb{R}^3)} + \| \tilde{g}_1 \|^2_{W^{\epsilon}_{1+\epsilon}(\mathbb{R}^3)} \right). \] 
This completes the proof of \Cref{cor:weighted_estimates}.

\begin{remark}
	It is worth noting that in the above calculation we did not use the full strength of the estimate on $\| Q_0(\phi,\psi)\|_{L^2}$, only the weaker estimate on $\|\Omega Q_0(\phi,\psi)\|_{L^2} \la \|Q_0(\phi,\psi)\|_{L^2}$ in order to avoid weights of inverse powers of $\Omega$ in front of the lower order terms. But handling these would only need weighted energy estimates for the linear wave equation, so we expect that the above calculation can be improved to obtain an estimate for $\| \langle \tilde{t} - \tilde{r} \rangle^2 \langle \tilde{t} + \tilde{r} \rangle^2 \widetilde{Q}_0(\tilde{\phi}, \tilde{\psi}) \|_{L^2(\mathbb{R}^4)}$.
\end{remark}

\subsection*{Acknowledgements} The author thanks Sergiu Klainerman for useful discussions and an anonymous referee for a careful reading of the manuscript, particularly for suggesting an improvement to the proof of \Cref{lem:aux_2}.

\appendix

\section{Harmonic Analysis on Lie Groups} \label{sec:harmonicanalygroups}

\subsection{Peter--Weyl Theorem}

Here we recap briefly the fundamentals of harmonic analysis on Lie groups. The proofs of the statements below can be found in standard texts, e.g. \cite{Faraut2008}. Let $\mathrm{G}$ be a compact matrix Lie group. A \emph{representation} of $\mathrm{G}$ on a finite dimensional complex vector space $V$ is a group homomorphism $\pi : \mathrm{G} \to \mathrm{GL}(V)$ such that the map $g \mapsto \pi(g) v$ is continuous for all $v \in V$. We say two representations $(\pi_1, V_1)$ and $(\pi_2, V_2)$ are \emph{equivalent} if there exists a continuous isomorphism $A :V_1 \to V_2$ such that $A \pi_1(g) = \pi_2(g) A$ for all $g \in \mathrm{G}$. In this case we write $\pi_1 \simeq \pi_2$. Let $(\pi, V)$ be a representation of $\mathrm{G}$ and let $W \subset V$ be a subspace of $V$. The space $W$ is said to be an \emph{invariant subspace} of the representation $\pi$ if $\pi(\mathrm{G}) W \subset W$. One then defines an \emph{irreducible} representation to be a representation which has no non-trivial invariant subspaces.

Next suppose $V$ is a complex vector space equipped with a Hermitian inner product $\langle \cdot, \cdot \rangle$. Recall that an operator $A$ on $V$ is \emph{unitary} if $\langle A u , Aw \rangle = \langle v, w \rangle$ for all $v,w \in V$. We say that a representation $\pi$ of $\mathrm{G}$ on $V$ is \emph{unitary} if for every $g \in \mathrm{G}$ the operator $\pi(g)$ is unitary.

Every compact Lie group $\mathrm{G}$ admits a unique (up to multiplication by a positive constant) left-invariant measure $\mu$ called the \emph{Haar measure}. That is, the measure $\mu$ satisfies
\[ \int_{\mathrm{G}} f(kx) \, \d \mu(x) = \int_{\mathrm{G}} f(x) \, \d \mu(x) \]
for every measurable function $f : \mathrm{G} \to \mathbb{C}$ and all $k \in \mathrm{G}$. We fix the Haar measure by the normalization
\[ \int_{\mathrm{G}} \d \mu(x) = 1. \]
The Haar measure on any compact connected Lie group $\mathrm{G}$ is also right-invariant.

\begin{definition} Let $\hat{\mathrm{G}}$ denote the set of equivalence classes of unitary irreducible representations of a compact Lie group $\mathrm{G}$. The set $\hat{\mathrm{G}}$ is called the \emph{unitary dual} of $\mathrm{G}$.	
\end{definition}

\begin{definition} Let $\pi : \mathrm{G} \to \mathrm{GL}(V)$ be a unitary $\mathbb{C}$-linear irreducible representation of $\mathrm{G}$ on a complex vector space $V$ of dimension $d_\pi$. Let $\{ e_i \}_{1 \leq i \leq d_\pi}$ be an orthonormal basis for $V$. We call
\[ \pi_{ij}(g) \overset{\text{def}}{=} \langle \pi(g) e_j, e_i \rangle \]
the \emph{matrix coefficients} of $\pi$ with respect to $\{ e_i \}_i$.	
\end{definition}

\begin{proposition}[Schur orthogonality] For $u,v,u',v' \in V$ one has the orthogonality relations
\[ \int_{\mathrm{G}} \langle \pi(g) u, v \rangle \overline{\langle \pi(g) u', v' \rangle} \, \d \mu(g) = \frac{1}{d_\pi} \langle u, u' \rangle \overline{\langle v, v' \rangle}. \]
\end{proposition}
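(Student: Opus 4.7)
The plan is to reduce the identity to an application of Schur's lemma via the standard averaging trick. Given the vectors $v, v' \in V$, I introduce the rank-one operator $A: V \to V$ defined by $A w = \langle w, v \rangle v'$ and consider the averaged operator
\[ T \overset{\text{def}}{=} \int_{\mathrm{G}} \pi(g)^{-1} A \pi(g) \, \d \mu(g) . \]
Using unitarity, $\pi(g)^{-1} = \pi(g)^*$, so for any $u, u' \in V$
\[ \langle T u, u' \rangle = \int_{\mathrm{G}} \langle A \pi(g) u, \pi(g) u' \rangle \, \d \mu(g) = \int_{\mathrm{G}} \langle \pi(g) u, v \rangle \, \overline{\langle \pi(g) u', v' \rangle} \, \d \mu(g), \]
which is exactly the quantity to be evaluated.

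Next I would verify that $T$ intertwines the representation with itself, i.e.\ $\pi(k) T = T \pi(k)$ for every $k \in \mathrm{G}$. Writing $\pi(k) T \pi(k)^{-1}$ and changing variables $g \mapsto g k^{-1}$ in the integral, which is permissible because the Haar measure on a compact Lie group is two-sided invariant, yields $\pi(k) T \pi(k)^{-1} = T$. Since $(\pi, V)$ is a finite-dimensional complex irreducible unitary representation, Schur's lemma then implies $T = \lambda I$ for some scalar $\lambda \in \mathbb{C}$, and hence $\langle T u, u' \rangle = \lambda \langle u, u' \rangle$.

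It remains to identify $\lambda$. Taking the trace gives $d_\pi \lambda = \operatorname{tr}(T)$, while by cyclicity of the trace and $\int_{\mathrm{G}} \d \mu = 1$,
\[ \operatorname{tr}(T) = \int_{\mathrm{G}} \operatorname{tr}\bigl( \pi(g)^{-1} A \pi(g) \bigr) \, \d \mu(g) = \operatorname{tr}(A) = \sum_i \langle e_i, v \rangle \langle v', e_i \rangle = \langle v', v \rangle, \]
where the last step follows from Parseval in any orthonormal basis $\{ e_i \}$ of $V$. Therefore $\lambda = d_\pi^{-1} \langle v', v \rangle = d_\pi^{-1} \overline{\langle v, v' \rangle}$, and substituting back into $\langle T u, u' \rangle = \lambda \langle u, u' \rangle$ produces precisely the claimed orthogonality relation.

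I do not anticipate a serious obstacle here, as the argument is entirely mechanical. The only points requiring care are tracking the sesquilinearity convention of $\langle \cdot, \cdot \rangle$ so that $A$ is set up to reproduce exactly the integrand on the left, and invoking Schur's lemma in the appropriate finite-dimensional complex form; both are standard and already implicit in the preliminary facts recalled earlier in this appendix.
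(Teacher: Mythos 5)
Your argument is correct: the averaging operator $T=\int_{\mathrm{G}}\pi(g)^{-1}A\pi(g)\,\d\mu(g)$ with the rank-one $A$, the intertwining via bi-invariance of the Haar measure, Schur's lemma, and the trace computation together give exactly the stated identity with the right constant $d_\pi^{-1}\overline{\langle v,v'\rangle}$. The paper itself gives no proof (it defers to standard texts such as \cite{Faraut2008}), and your argument is precisely the standard one found there, so there is nothing to reconcile.
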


As a consequence of Schur orthogonality, it is easy to check that the matrix coefficients of a unitary $\mathbb{C}$-linear irreducible representation $\pi$ are orthogonal in $L^2(\mathrm{G})$,
\begin{equation} \label{matrixcoefficientorthogonality} \langle \pi_{ij}, \pi_{kl} \rangle_{L^2(\mathrm{G})} \overset{\text{def}}{=} \int_{\mathrm{G}} \pi_{ij}(g) \overline{\pi_{kl}(g)} \, \d \mu(g) = \frac{1}{d_\pi} \delta_{ik} \delta_{jl}. \end{equation}
Let $\mathcal{M}_\pi$ denote the subspace of $L^2(\mathrm{G})$ spanned by the matrix coefficients $\pi_{ij}$ for $1 \leq i,j \leq d_\pi$. It is not hard to see that $\mathcal{M}_\pi$ is independent of the choice of orthonormal basis for $V$, and that, on account of \eqref{matrixcoefficientorthogonality},
\[ \left\{ \sqrt{d_\pi} \pi_{ij} \, : \, 1 \leq i, j \leq d_\pi \right\} \]
is an orthonormal basis for $\mathcal{M}_\pi \subset L^2(\mathrm{G})$.

\begin{proposition} If $\pi \not\simeq \pi'$ are two inequivalent unitary irreducible representations of $\mathrm{G}$, then $\mathcal{M}_\pi$ and $\mathcal{M}_{\pi'}$ are orthogonal in $L^2(\mathrm{G})$,
\[ \int_{\mathrm{G}} \langle \pi(g) u, v \rangle \overline{\langle \pi'(g) u', v' \rangle'} \, \d \mu(g) = 0 \]
for all $u,v \in V$, $u', v' \in V'$.	
\end{proposition}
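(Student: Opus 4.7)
The plan is to deduce the orthogonality from Schur's lemma via an averaging trick that produces an intertwiner between $\pi'$ and $\pi$. Fix arbitrary $v \in V$ and $v' \in V'$ and let $A \colon V' \to V$ be the rank-one operator $A w' = \langle w', v' \rangle' v$. I define
\[ T \defeq \int_{\mathrm{G}} \pi(g)^{-1} A \, \pi'(g) \, \d \mu(g) \colon V' \to V, \]
which is a well-defined bounded linear map since $\mathrm{G}$ is compact and the integrand is continuous in $g$.

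The first step is to check that $T$ intertwines $\pi'$ and $\pi$, i.e. $\pi(h) T = T \pi'(h)$ for all $h \in \mathrm{G}$. This follows from the left- and right-invariance of the Haar measure on a compact group: performing the substitution $g \mapsto g h$ inside the integral and using the homomorphism property $\pi(gh) = \pi(g)\pi(h)$ (and similarly for $\pi'$) gives the claim. Since $\pi$ and $\pi'$ are irreducible and inequivalent, Schur's lemma forces $T = 0$.

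The second step is to extract the desired scalar identity. For any $u \in V$ and $u' \in V'$, I compute
\[ \langle T u', u \rangle = \int_{\mathrm{G}} \langle A\, \pi'(g) u', \pi(g) u \rangle \, \d \mu(g) = \int_{\mathrm{G}} \langle \pi'(g) u', v' \rangle' \, \overline{\langle \pi(g) u, v \rangle} \, \d \mu(g), \]
where I used unitarity of $\pi$ to move $\pi(g)^{-1}$ to the other slot, and then the definition of $A$. Since $T = 0$, the left-hand side vanishes; taking complex conjugates yields exactly the orthogonality relation in the statement, because $u, v, u', v'$ were arbitrary.

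There is no real obstacle here beyond checking the intertwining identity carefully; the subtlety is purely bookkeeping, namely choosing the placement of $\pi(g)^{-1}$ versus $\pi'(g)$ in the averaged operator so that (i) invariance of $\d \mu$ yields the intertwining property in the correct direction to apply Schur, and (ii) the subsequent pairing with $u$ produces the complex conjugate factor $\overline{\langle \pi(g) u, v \rangle}$ rather than $\langle \pi(g) u, v \rangle$ itself. Both are handled by the above choice together with the unitarity of $\pi$.
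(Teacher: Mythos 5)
Your proof is correct: the averaged operator $T=\int_{\mathrm{G}}\pi(g)^{-1}A\,\pi'(g)\,\d\mu(g)$ does intertwine $\pi'$ and $\pi$ by bi-invariance of the Haar measure, Schur's lemma kills it since $\pi\not\simeq\pi'$ are irreducible, and pairing with $u$ (using unitarity of $\pi$ and the first-slot-linear convention consistent with the paper's Schur orthogonality statement) yields exactly the claimed vanishing integral. The paper gives no proof of this proposition, deferring to standard references, and your argument is precisely the classical averaging-plus-Schur proof found there, so there is nothing to add.
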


\begin{theorem}[Peter--Weyl]
\[ L^2(\mathrm{G}) = \widehat{\bigoplus}_{\pi \in \hat{\mathrm{G}}} \mathcal{M}_\pi, \]
where $\widehat{\bigoplus}_\pi \mathcal{M}_\pi$ denotes the closure of $\bigoplus_\pi \mathcal{M}_\pi$ in $L^2(\mathrm{G})$.	
\end{theorem}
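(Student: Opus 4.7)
The plan is to realise $L^2(\mathrm{G})$ as an orthogonal sum of eigenspaces of a well-chosen family of compact self-adjoint operators, each of which intertwines with the right regular representation, and then identify the resulting finite-dimensional invariant subspaces with spans of matrix coefficients of unitary irreducible representations.

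First I would set up the right regular representation $R : \mathrm{G} \to \mathcal{U}(L^2(\mathrm{G}))$ defined by $(R(h)\psi)(g) = \psi(gh)$, which is unitary thanks to the right-invariance of the Haar measure on the compact group $\mathrm{G}$. Next, for any symmetric $f \in C(\mathrm{G})$ (i.e.\ $f(g) = \overline{f(g^{-1})}$) I would introduce the convolution operator $T_f \psi = f * \psi$. Its integral kernel $(g,h) \mapsto f(gh^{-1})$ is continuous on the compact set $\mathrm{G} \times \mathrm{G}$, hence square-integrable, so $T_f$ is Hilbert--Schmidt (in particular compact); the symmetry condition makes it self-adjoint, and a change of variables shows that $T_f$ commutes with $R$.

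Next I would invoke the spectral theorem for compact self-adjoint operators to write
\[ L^2(\mathrm{G}) = \ker T_f \;\oplus\; \widehat{\bigoplus}_{\lambda \neq 0} E_\lambda^{T_f}, \]
where each nonzero eigenspace $E_\lambda^{T_f}$ is finite-dimensional. Because $T_f$ commutes with $R$, each $E_\lambda^{T_f}$ is an $R$-invariant subspace, and being a finite-dimensional unitary representation it splits as an orthogonal direct sum of irreducibles. To establish density of $\bigoplus_{\pi \in \hat{\mathrm{G}}} \mathcal{M}_\pi$, I would then employ an approximate identity: take a sequence of symmetric, nonnegative $f_n \in C(\mathrm{G})$ with $\int f_n \, \d \mu = 1$ and supports shrinking to $\{e\}$. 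A uniform-continuity argument gives $T_{f_n}\psi \to \psi$ in $L^2(\mathrm{G})$ for every $\psi$, so $\psi$ lies in the $L^2$-closure of $\bigcup_n \operatorname{Range}(T_{f_n})$, which by the previous step is contained in the $L^2$-closure of the span of matrix coefficients of finite-dimensional unitary irreducible $R$-subrepresentations of $L^2(\mathrm{G})$.

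The main obstacle is the final identification: one must show that every irreducible $R$-subrepresentation $W \subset L^2(\mathrm{G})$ is equivalent to some $\pi \in \hat{\mathrm{G}}$ \emph{and} that its elements, viewed as functions on $\mathrm{G}$, genuinely belong to $\mathcal{M}_\pi$. The cleanest way is to pick an orthonormal basis $\{w_1, \ldots, w_{d_\pi}\}$ of $W$ with $R(h) w_j = \sum_i \pi_{ij}(h) w_i$, and then evaluate at the identity: $w_j(g) = (R(g)w_j)(e) = \sum_i \pi_{ij}(g) w_i(e)$, which exhibits $w_j$ as a linear combination of the matrix coefficients $\pi_{ij}$ and hence places $w_j$ in $\mathcal{M}_\pi$. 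Combined with the orthogonality of distinct $\mathcal{M}_\pi$ provided by the preceding proposition, this upgrades the algebraic statement to the desired orthogonal Hilbert space decomposition $L^2(\mathrm{G}) = \widehat{\bigoplus}_{\pi \in \hat{\mathrm{G}}} \mathcal{M}_\pi$.
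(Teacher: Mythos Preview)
The paper does not actually prove the Peter--Weyl theorem: it is stated in the appendix as background, with the proofs of all the statements in that section deferred to the standard reference \cite{Faraut2008}. So there is no ``paper's own proof'' to compare against.

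Your outline is a correct and standard route to Peter--Weyl (essentially the argument one finds in Faraut or similar texts): compact self-adjoint convolution operators commuting with the right regular representation, the spectral theorem to produce finite-dimensional $R$-invariant eigenspaces, an approximate identity to get density, and the identification of each irreducible $R$-subrepresentation with a space of matrix coefficients. One small technical point worth making explicit in the final step: the evaluation $w_j(g) = (R(g)w_j)(e)$ requires that the $w_j$ be continuous functions, not merely $L^2$ classes. This is fine because each $w_j$ lies in a nonzero eigenspace of some $T_{f_n}$ and hence equals $\lambda^{-1} f_n * w_j$, which is continuous since $f_n \in C(\mathrm{G})$; but you should say so.
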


\begin{definition} Let $f \in L^1(\mathrm{G})$. For each $\pi \in \hat{\mathrm{G}}$ the \emph{Fourier coefficient} $\hat{f}(\pi)$ is the operator on $V$ (the complex vector space associated with the representation $\pi$)  defined by 
\[ \hat{f}(\pi) \overset{\text{def}}{=} \int_{\mathrm{G}} f(g) \pi(g^{-1}) \, \d \mu(g). \]
For matrix Lie groups $\hat{f}(\pi)$ can be thought of as a matrix.
\end{definition}

By the Schur orthogonality relations and the Peter--Weyl theorem,
\[ \mathcal{B} = \left\{ \sqrt{d_\pi} \pi_{ij}, \, \pi \in \hat{\mathrm{G}}, \, 1 \leq i, j \leq d_\pi \right\} \]
is a Hilbert basis for $L^2(\mathrm{G})$. It is not difficult to check that for $f \in L^2(\mathrm{G})$ we have
\[ \langle f, \pi_{ij} \rangle_{L^2(\mathrm{G})} = \langle \hat{f}(\pi) e_j, e_i \rangle_V = \hat{f}(\pi)_{ij}. \]
This leads to the following.

\begin{theorem}[Plancherel Theorem] Let $f \in L^2(\mathrm{G})$. Then
\[ f(g) = \sum_{\pi \in \hat{\mathrm{G}}} d_\pi \operatorname{Tr}(\hat{f}(g) \pi(g) ) \]
in $L^2(\mathrm{G})$, and furthermore
\[ \| f \|^2_{L^2(\mathrm{G})} = \sum_{\pi \in \hat{\mathrm{G}}} d_\pi \vertiii{ \hat{f}(\pi) }^2, \]
where $\vertiii{\hat{f}(\pi)}$ is the Frobenius norm of the matrix $\hat{f}(\pi)$ defined by $\vertiii{A}^2 = \operatorname{Tr}(A A^\dagger)$.	
\end{theorem}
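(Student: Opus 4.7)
The plan is to obtain both assertions as immediate consequences of the Peter--Weyl theorem just proved, which established that $\mathcal{B} = \{\sqrt{d_\pi}\,\pi_{ij}\}$ is a Hilbert basis of $L^2(\mathrm{G})$. Every $f \in L^2(\mathrm{G})$ therefore admits the orthonormal expansion
\[
f \;=\; \sum_{\pi \in \hat{\mathrm{G}}}\, d_\pi \sum_{i,j=1}^{d_\pi} \langle f,\pi_{ij}\rangle_{L^2(\mathrm{G})}\,\pi_{ij},
\]
with convergence in $L^2(\mathrm{G})$, together with the Parseval identity $\|f\|_{L^2(\mathrm{G})}^2 = \sum_\pi d_\pi \sum_{i,j}|\langle f,\pi_{ij}\rangle_{L^2(\mathrm{G})}|^2$. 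The entire proof then reduces to identifying these inner products as matrix entries of $\hat{f}(\pi)$.

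First I would verify the identification noted just before the theorem statement, namely $\langle f,\pi_{ij}\rangle_{L^2(\mathrm{G})} = \langle \hat{f}(\pi) e_j, e_i\rangle_V$. This is a short direct computation using the unitarity of $\pi$: from $\pi(g)^{-1} = \pi(g)^\dagger$ one obtains $\overline{\pi_{ij}(g)} = \langle \pi(g^{-1}) e_j, e_i\rangle_V$, and substitution into the $L^2$-inner product, together with Fubini and the definition $\hat{f}(\pi) = \int_{\mathrm{G}} f(g)\,\pi(g^{-1})\,\d\mu(g)$, yields the claimed equality. Substituting into the Parseval identity above immediately gives $\|f\|_{L^2(\mathrm{G})}^2 = \sum_\pi d_\pi\,\vertiii{\hat{f}(\pi)}^2$, which is the norm assertion.

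For the inversion formula I would insert the matrix entries back into the Peter--Weyl expansion and recognise the resulting double sum over $(i,j)$ as a trace via the elementary identity $\sum_{i,j} A_{ji} B_{ij} = \operatorname{Tr}(AB)$, applied with $A = \hat{f}(\pi)$ and $B = \pi(g)$; the indices align because $\pi_{ij}(g) = \pi(g)_{ij}$ in the chosen orthonormal basis of $V$. The output is $f = \sum_\pi d_\pi \operatorname{Tr}(\hat{f}(\pi)\pi(g))$ in $L^2(\mathrm{G})$, as required. The only subtle point, and the main bookkeeping step, will be keeping the row/column indices consistent across the two conventions $\hat{f}(\pi)_{ij} = \langle \hat{f}(\pi) e_j, e_i\rangle$ and $\pi_{ij}(g) = \langle \pi(g) e_j, e_i\rangle$; once this is done no further analytic input beyond the Peter--Weyl orthonormal basis statement is required, since density of trigonometric polynomials and $L^2$-convergence are automatic from the Hilbert basis property of $\mathcal{B}$.
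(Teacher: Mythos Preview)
Your proposal is correct and follows precisely the route the paper indicates: the paper does not give a detailed proof of this theorem but simply notes that $\mathcal{B}=\{\sqrt{d_\pi}\,\pi_{ij}\}$ is a Hilbert basis, records the identity $\langle f,\pi_{ij}\rangle_{L^2(\mathrm{G})}=\hat f(\pi)_{ij}$, and then states ``This leads to the following,'' deferring details to standard references. Your argument is exactly the natural unpacking of that sentence, and your explicit flagging of the row/column bookkeeping is apt (indeed, depending on convention one obtains $\langle f,\pi_{ij}\rangle=\hat f(\pi)_{ji}$ rather than $\hat f(\pi)_{ij}$, which is precisely what makes the trace come out as $\operatorname{Tr}(\hat f(\pi)\pi(g))$ rather than $\operatorname{Tr}(\hat f(\pi)\pi(g)^T)$).
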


\subsection{Sobolev spaces on \texorpdfstring{$\mathrm{SU}(2)$}{SU2}}

We identify $\mathbb{S}^3 \simeq \mathrm{SU}(2)$. The normalized Haar measure on $\mathrm{SU}(2)$ is the volume form of the standard metric on $\mathbb{S}^3$,
\begin{equation} \label{Haarmeasure} \d \mu = \frac{1}{2 \pi^2} \dvol_{\mathbb{S}^3}. \end{equation}
For convenience we will define the Sobolev norms on $\mathbb{S}^3$ with respect to the scaling \eqref{Haarmeasure} of the $\mathbb{S}^3$ volume form. The finite-dimensional irreducible representations $\pi_m : \mathrm{SU}(2) \to \mathrm{GL}(V_m)$ of $\mathrm{SU}(2)$ are labelled by $m \in \mathbb{N}_0$, one for each $m$, and have dimensions $(m+1)$. They are exactly the eigenfunctions of the metric Laplacian $\slashed{\Delta}$ on $\mathbb{S}^3$ (or equivalently the Casimir operator on $\mathrm{SU}(2)$),
\[ \slashed{\Delta} \pi_m = - m(m+2) \pi_m. \]
Since $\pi_m$ is a matrix of size $(m+1) \times (m+1)$, the eigenvalue $-m(m+2)$ has multiplicity $(m+1)^2$. The Schur orthogonality relations imply that
\[ \int_{\mathrm{SU}(2)} (\pi_m)_{ij}(x) \overline{(\pi_n)_{i'j'}(x)} \, \d \mu(x) = \frac{1}{(m+1)} \delta_{i i'} \delta_{j j'} \delta_{mn}, \]
and the Peter--Weyl and Plancherel theorems state that any $f \in L^2(\mathbb{S}^3)$ is expressible as its Fourier series
\[ f(x) = \sum_{m \geq 0} (m+1) \operatorname{Tr}(\hat{f}(\pi_m) \pi_m(x)), \]
interpreted as equality in $L^2(\mathbb{S}^3)$. Furthermore, the $L^2$ norm of $f$ is given by
\[ \| f \|^2_{L^2(\mathbb{S}^3)} = \sum_{m \geq 0} (m+1) \, \vertiii{ \hat{f}(\pi_m) }^2.  \]
One can similarly define the Sobolev spaces $H^k(\mathbb{S}^3)$ using the $\mathrm{SU}(2)$ Fourier coefficients. By Schur orthogonality,
\begin{align*} \| \slashgrad f \|^2_{L^2(\mathbb{S}^3)} & = \langle f, - \slashed{\Delta} f \rangle_{L^2(\mathbb{S}^3)} \\
& = \sum_{m \geq 0} \sum_{n \geq 0} (n+1)(m+1) \hat{f}(\pi_n)_{ij} \hat{f}(\pi_m)_{rs} \langle (\pi_n)_{ji}, -\slashed{\Delta} (\pi_m)_{sr} \rangle_{L^2(\mathbb{S}^3)}	 \\
& = \sum_{m \geq 0} \sum_{n \geq 0} (n+1) m(m+1)(m+2) \hat{f}(\pi_n)_{ij} \hat{f}(\pi_m)_{rs} \delta_{mn} \frac{1}{(m+1)} \delta_{js} \delta_{ir} \\
& = \sum_{m \geq 0} m(m+1)(m+2) \, \vertiii{ \hat{f}(\pi_m) }^2,
\end{align*}
so
\begin{align*} \| f \|^2_{H^1(\mathbb{S}^3)} &= \| f \|^2_{L^2(\mathbb{S}^3)} + \| \slashgrad f \|^2_{L^2(\mathbb{S}^3)} = \sum_{m \geq 0} (m+1)^3 \, \vertiii{ \hat{f}(\pi_m) }^2.
\end{align*}
It is not difficult to see that the usual higher differentiability Sobolev norms are equivalent to similar expressions,
\[ \| f \|^2_{H^k(\mathbb{S}^3)} \simeq \sum_{m \geq 0} (m+1)^{2k+1} \, \vertiii{ \hat{f}(\pi_m) }^2. \]
We also use the above expression to define the spaces $H^k(\mathbb{S}^3)$ for non-integer $k$. This definition coincides with the usual one via $ H^k(\mathbb{S}^3) = (1-\slashed{\Delta})^{-k/2}(L^2(\mathbb{S}^3))$.

\section{An Auxiliary Lemma}

\begin{lemma} \label{lem:aux_2}
	The series
	\[ S(\beta_0, \alpha, \beta;n) \overset{\text{def}}{=} \sum_{k=1}^{n-1} \frac{1}{(1+|n-2k|)^{1+2\beta_0}} \frac{1}{k^\alpha} \frac{1}{(n-k)^\beta} \]
	converges as $n \to \infty$ if
	\begin{align}
		\label{aux_lemma_condition_1} \alpha + \beta + 2\beta_0 &> 0, \\
		\label{aux_lemma_condition_2} \alpha + \beta &\geq 0, \\
		\label{aux_lemma_condition_3} 1 + 2\beta_0 &\geq 0, \\
		\label{aux_lemma_condition_4} \alpha + 1 + 2\beta_0 &\geq 0, \\
		\label{aux_lemma_condition_5} \beta + 1 + 2\beta_0 &\geq 0.
	\end{align}
	\end{lemma}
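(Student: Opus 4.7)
The plan is to split the summation range into three regions according to the size of $k$ relative to $n$: a small-$k$ region $1 \le k \le \lfloor n/3 \rfloor$, a middle region $\lfloor n/3 \rfloor < k < \lceil 2n/3 \rceil$, and a large-$k$ region $\lceil 2n/3 \rceil \le k \le n-1$. In each region two of the three factors in the summand admit uniform pointwise bounds in powers of $n$, reducing the task to a one-dimensional sum in the remaining index. Since the sum over the large-$k$ region is the image of the small-$k$ one under the involution $k \mapsto n-k$, which interchanges the roles of $\alpha$ and $\beta$, the problem reduces to estimating just two contributions and invoking symmetry for the third.

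In the small-$k$ region both $1+|n-2k| \asymp n$ and $n-k \asymp n$, so the contribution is bounded by
\[ C\, n^{-(1+2\beta_0+\beta)} \sum_{k=1}^{\lfloor n/3 \rfloor} k^{-\alpha}. \]
The inner sum is $O(1)$ when $\alpha > 1$, $O(\log n)$ when $\alpha = 1$, and $O(n^{1-\alpha})$ when $\alpha < 1$ (the latter also covering $\alpha \le 0$). In the first case condition \eqref{aux_lemma_condition_5} ensures boundedness; in the second, the strict inequality \eqref{aux_lemma_condition_1} specialized at $\alpha=1$ absorbs the logarithm; in the third, the overall bound becomes $n^{-(\alpha+\beta+2\beta_0)}$, again controlled by \eqref{aux_lemma_condition_1}. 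The large-$k$ region is treated identically after the substitution $k \mapsto n-k$, invoking \eqref{aux_lemma_condition_4} in place of \eqref{aux_lemma_condition_5}.

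In the middle region $k \asymp n-k \asymp n$, so $k^{-\alpha}(n-k)^{-\beta} \la n^{-(\alpha+\beta)}$. Writing $j = n-2k$, the contribution is bounded by
\[ C\, n^{-(\alpha+\beta)} \sum_{|j| \la n} (1+|j|)^{-(1+2\beta_0)}. \]
Condition \eqref{aux_lemma_condition_3}, i.e.\ $1+2\beta_0 \ge 0$, makes the summand non-increasing in $|j|$, so the inner sum is $O(1)$ when $\beta_0 > 0$, $O(\log n)$ when $\beta_0 = 0$, and $O(n^{-2\beta_0})$ when $-\frac{1}{2} \le \beta_0 < 0$. These three cases are handled, respectively, by \eqref{aux_lemma_condition_2}, by the strict inequality $\alpha+\beta > 0$ to which \eqref{aux_lemma_condition_1} reduces at $\beta_0 = 0$, and by \eqref{aux_lemma_condition_1} itself.

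The main (modest) subtlety is the bookkeeping of the logarithmic borderline cases: the strict inequality in \eqref{aux_lemma_condition_1} is precisely what is needed to absorb logarithmic factors arising at $\alpha = 1$, at $\beta = 1$, or at $\beta_0 = 0$, while the remaining non-strict inequalities \eqref{aux_lemma_condition_2}--\eqref{aux_lemma_condition_5} cover the non-logarithmic regimes. Adding the three regional estimates then yields $\sup_{n \ge 2} S(\beta_0, \alpha, \beta; n) < \infty$, which is the desired conclusion.
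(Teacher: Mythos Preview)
Your proof is correct and in fact cleaner than the paper's. Both arguments begin by exploiting the $k\leftrightarrow n-k$ symmetry, but then diverge. The paper passes to the variable $s=|n-2k|$, obtaining $\tilde S=\sum_{s}(1+s)^{-1-2\beta_0}(n+s)^{-\alpha}(n-s)^{-\beta}$, and then runs a fairly elaborate case analysis: when all three exponents lie in $(0,1]$ it uses two nested applications of H\"older's inequality with carefully chosen conjugate exponents, and when $\alpha$ or $\beta$ may be negative it shows monotonicity of the summand and compares to an integral $I(\beta_0,\alpha,\beta;n)$ which is then estimated after a change of variables and a further dyadic split of the domain. Your three-region split (small $k$, middle $k$, large $k$) is more efficient because in each region two of the three factors are genuinely comparable to a power of $n$, so the problem immediately reduces to a single elementary one-variable sum with the standard trichotomy (convergent / logarithmic / polynomial). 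This avoids both H\"older and the integral comparison, and makes transparent exactly which hypothesis absorbs each borderline logarithm: \eqref{aux_lemma_condition_1} handles $\alpha=1$, $\beta=1$, and $\beta_0=0$, while \eqref{aux_lemma_condition_2}, \eqref{aux_lemma_condition_4}, \eqref{aux_lemma_condition_5} handle the corresponding strict cases. The paper's route has the minor advantage of also establishing that \eqref{aux_lemma_condition_2}, \eqref{aux_lemma_condition_4}, \eqref{aux_lemma_condition_5} are necessary, but that is not part of the lemma as stated.
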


\begin{proof}
	By splitting the sum into the two ranges $1 \leq k \leq n /2 $ and $ n /2 < k \leq n - 1$, we obtain
	\begin{align*} S(\beta_0, \alpha, \beta;n) &= \sum_{0 \leq s \leq n-2} (1+s)^{-1-2\beta_0} (n+s)^{-\alpha} (n-s)^{-\beta} \\
	&+ \sum_{0 < s \leq n-2} (1+s)^{-1-2\beta_0} (n-s)^{-\alpha} (n+s)^{-\beta}.	
	\end{align*}
	By symmetry, it is enough to consider the convergence of 
	\[ \tilde{S}(\beta_0, \alpha, \beta;n) \overset{\text{def}}{=} \sum_{s=0}^{n-2} ( 1+ s)^{-1-2\beta_0} (n+s)^{-\alpha} (n-s)^{-\beta}. \]
	
	\subsubsection*{Necessity}
	
	By considering the first and last terms in the above series, we first observe that $\tilde{S}(\beta_0, \alpha, \beta;n) \geq n^{-\alpha-\beta} + 2^{-\alpha - \beta} (n-1)^{-1-\alpha-2\beta_0}$, so
	\begin{equation}
	\label{aux_lemma_necessary_condition_1}
		 \alpha + \beta \geq 0 \quad \text{and} \quad \alpha + 1 + 2\beta_0 \geq 0 
	\end{equation}
	are necessary conditions. By symmetry, we then also need
	\begin{equation}
	\label{aux_lemma_necessary_condition_2}
		\beta + 1 + 2\beta_0 \geq 0.
	\end{equation}
	Together, these imply
	\[ \beta_0 \geq - \frac{1}{2} - \frac{1}{4}(\alpha + \beta). \]
	
	\subsubsection*{Sufficiency} Assume \eqref{aux_lemma_necessary_condition_1} and \eqref{aux_lemma_necessary_condition_2}, and assume further that 
	\begin{equation}
		\label{beta_0_condition}
		\beta_0 \geq - \frac{1}{2}
	\end{equation}
	and
	\begin{equation}
		\label{alpha_beta_beta_0_sum_condition}
		\alpha + \beta + 2\beta_0 > 0.
	\end{equation}
	Suppose first that two of the three exponents vanish. Then \eqref{alpha_beta_beta_0_sum_condition} implies that the remaining exponent is greater than $1$, and convergence follows.
	
	Next, suppose that exactly one of the three exponents vanishes. If $\alpha = 0$, then \eqref{aux_lemma_necessary_condition_1} implies that $\beta > 0$ and $1 + 2\beta_0 > 0$. By H\"older's inequality, convergence follows if $\beta + 2\beta_0 > 0$, which is guaranteed by \eqref{alpha_beta_beta_0_sum_condition}. The other two cases are similar.
	
	Suppose therefore that none of the three exponents vanish. We first treat the case when both $\alpha$ and $\beta$ are positive. Convergence follows trivially if $\alpha > 1$, $\beta > 1$, or $1+ 2\beta_0 > 1$, so suppose first that 
	\[ 0 < \alpha, \, \beta, \, 1+2\beta_0 \leq 1 .\]
	Note that in this case $ -1/2 < \beta_0 \leq 0$ and that \eqref{aux_lemma_necessary_condition_1} and \eqref{aux_lemma_necessary_condition_2} are satisfied. Then we claim that convergence follows by two applications of H\"older's inequality, first to separate the $(1+s)^{-1-2\beta_0}$ term, and then to separate the two remaining terms. Indeed, applying H\"older's inequality once, we wish to find $p, \, q > 1$ such that 
	\[ p (1+ 2\beta_0 ) > 1, \qquad \frac{1}{p} + \frac{1}{q} = 1. \]
	Choose $\epsilon > 0$ such that $\epsilon < 1 + 2\beta_0$, and put 
	\[ p = \frac{1}{1 + 2\beta_0 - \epsilon}. \]
	Then 
	\[ \frac{1}{q} = 1 - \frac{1}{p} = \epsilon - 2\beta_0 \in (0, 1) \]
	for sufficiently small $\epsilon$, so we may choose such a $q > 1$. Now applying H\"older again, we wish to find exponents $r, \, s > 1$ such that 
	\[ \alpha q r > 1, \qquad \beta q s < 1, \quad \text{and} \quad \frac{1}{r} + \frac{1}{s} = 1. \]
	Choose $r> 1$ such that
	\[ \epsilon - 2\beta_0 < \alpha r. \]
	We then seek an $s > 1$ such that
	\[ \frac{1}{s} = 1 - \frac{1}{r} \in (1 - \alpha (\epsilon - 2\beta_0)^{-1} , \, \beta q),  \]
	which is possible if 
	\[ 1 - \frac{\alpha}{\epsilon - 2\beta_0} < \beta q = \frac{\beta}{\epsilon - 2\beta_0} . \]
	This is satisfied if $\alpha + \beta + 2\beta_0 > \epsilon$, which is possible by \eqref{alpha_beta_beta_0_sum_condition} if $\epsilon$ is sufficiently small. 
	
	Finally, suppose that either $\alpha$ or $\beta$ is allowed to be negative. Since we must have $\alpha + \beta \geq 0$, at least of one them must be non-negative, and because $(n+s) \geq (n-s)$, it is enough to treat the case $\beta \geq 0$ and $\alpha \leq \beta$. Let 
	\[ f(n;s) \overset{\text{def}}{=} (1+s)^{-1-2\beta_0} (n+s)^{-\alpha} (n-s)^{-\beta}. \]
	The derivative of $f(n;s)$ with respect to $s$ is given by 
	\[ \frac{\d f}{\d s} = - \frac{f(n;s)}{(n+s)(n-s)(1+s)} q(n,s), \]
	where $q(n,s)$ is the quadratic in $s$ and $n$
	\begin{align*} q(n,s) &= (n^2-s^2)(1+2\beta_0) + (1+s) (\beta(n+s) + \alpha(n-s)) \\
	& = s^2 ( \beta - \alpha - 1 - 2\beta_0) + s((n-1)\alpha + (n+1)\beta) + (1+2\beta_0)n^2 + (\alpha + \beta)n.
	\end{align*}
	Since $\beta_0 \geq -1/2$ and $0 \leq s \leq n-2$,
	\[ q(n,s) \geq 4 (n-1) (1+2\beta_0) +\underbrace{(1+s)}_{\geq 1} ((\alpha+\beta)n + \underbrace{(\beta -  \alpha)s}_{\geq 0}), \]
	so $q(n,s) \geq 0$ $\forall$ $0\leq s \leq n-2$ by \eqref{aux_lemma_necessary_condition_1}, \eqref{beta_0_condition}, and the assumption $\beta - \alpha \geq 0$. So the function $f(n;s)$ is decreasing in $s$ in this case, and hence 
	\[ \sum_{s=0}^{n-2} f(n;s) \leq n^{-(\alpha+\beta)} + I(\beta_0, \alpha, \beta; n), \]
	where 
		\[ I(\beta_0, \alpha, \beta; n) \overset{\text{def}}{=} \int_0^{n-2} (1+s)^{-1-2\beta_0}(n+s)^{-\alpha} (n-s)^{-\beta} \, \d s .  \]
	Making the change of variables $s = n\sigma$, the integral $I(\beta_0,\alpha,\beta;n)$ becomes
	\begin{align*}
		I(\beta_0, \alpha, \beta;n) &= n^{-\alpha - \beta - 2\beta_0} \int_0^{1-\frac{2}{n}} ( n^{-1} +\sigma )^{-1-2\beta_0} (1+\sigma)^{-\alpha} (1-\sigma)^{-\beta} \, \d \sigma \\
		& \la n^{-\alpha - \beta - 2\beta_0} \int_0^{1-\frac{2}{n}} (n^{-1} + \sigma)^{-1-2\beta_0} (1-\sigma)^{-\beta} \, \d \sigma,
	\end{align*}
	where we noted that $I(\beta_0,\alpha, \beta; n)$ is manifestly positive and $(1+\sigma) \simeq 1$ in the domain of integration (for $n \geq 2$). Splitting the range of integration into $[0, \frac{1}{2}]$ and $[\frac{1}{2}, 1 - \frac{2}{n}]$ and noting that in the former range $(1-\sigma) \simeq 1$ and in the latter range $(n^{-1} + \sigma) \simeq 1$, we deduce that $I(\beta_0, \alpha, \beta;n)$ converges as $n\to \infty$ if
	\[ n^{-\alpha - \beta - 2\beta_0} \left( \int_0^{\frac{1}{2}} (n^{-1} + \sigma)^{-1-2\beta_0} \, \d \sigma + \int_{\frac{1}{2}}^{1-\frac{2}{n}} (1-\sigma)^{-\beta} \, \d \sigma \right) \]
	converges. By direct integration, we conclude the following, distinguishing between the cases when the integrals lead to logarithmic terms. If $\beta_0 \neq 0$ and $\beta \neq 1$, we have convergence if $n^{-\alpha - \beta} + n^{-\alpha - 2\beta_0 -1}$ converges, i.e. if $\alpha + \beta \geq 0$ and $\alpha + 2\beta_0 + 1 \geq 0$. These are exactly \eqref{aux_lemma_condition_2} and \eqref{aux_lemma_condition_4}. If $\beta_0 = 0$ and $\beta \neq 1$, we have convergence of $I(\beta_0,\alpha, \beta;n)$ if $n^{-\alpha - \beta} \log n + n^{-\alpha -1}$ converges, i.e. when $\alpha + \beta > 0$ and $\alpha \geq -1$; these are implied by \eqref{aux_lemma_condition_1} and \eqref{aux_lemma_condition_4}. If, on the other hand, $\beta_0 \neq 0$ and $\beta = 1$, convergence follows if $\alpha + 2\beta_0 + 1 > 0$ and $\alpha \geq - 1$; these follow from \eqref{aux_lemma_condition_1} and \eqref{aux_lemma_condition_2}. Finally, if $\beta_0 = 0$ and $\beta = 1$, we have convergence if $\alpha > -1$, which is implied by \eqref{aux_lemma_condition_1}. 	
	\end{proof}

\printbibliography

\end{document}